\documentclass[11pt,a4paper,twoside,reqno]{amsart}

\usepackage[
  a4paper,
  margin=3.2cm,
  includeheadfoot,
  headheight=14pt,
  footskip=18pt
]{geometry}
\usepackage[T1]{fontenc}
\usepackage[utf8]{inputenc}
\usepackage{amsmath,amssymb,amsfonts,amsthm}
\usepackage{mathtools}
\usepackage[mathscr]{eucal}
\usepackage{microtype}
\usepackage{enumitem}
\usepackage{aliascnt}
\usepackage{xcolor}
\usepackage[
  backend=biber,
  style=numeric,
  citestyle=numeric-comp,
  sorting=nyt,
  giveninits=true
]{biblatex}
\usepackage{hyperref}
\usepackage[nameinlink,capitalise,noabbrev]{cleveref}

\hypersetup{
  colorlinks=true,
  linkcolor=blue,
  citecolor=blue,
  urlcolor=blue,
  pdftitle={Stability of Diophantine Minimal Foliations in Arbitrary Codimension},
  pdfauthor={Hoan Nguyen}
}

\numberwithin{equation}{section}

\theoremstyle{plain}
\newtheorem{Th}{Theorem}[section]
\newaliascnt{Lemma}{Th}
\newtheorem{Lemma}[Lemma]{Lemma}
\aliascntresetthe{Lemma}
\newaliascnt{Cor}{Th}
\newtheorem{Cor}[Cor]{Corollary}
\aliascntresetthe{Cor}
\newaliascnt{Prop}{Th}
\newtheorem{Prop}[Prop]{Proposition}
\aliascntresetthe{Prop}

\theoremstyle{definition}
\newaliascnt{Def}{Th}
\newtheorem{Def}[Def]{Definition}
\aliascntresetthe{Def}
\newaliascnt{Rem}{Th}
\newtheorem{Rem}[Rem]{Remark}
\aliascntresetthe{Rem}

\crefname{Th}{Theorem}{Theorems}
\Crefname{Th}{Theorem}{Theorems}
\crefname{Lemma}{Lemma}{Lemmas}
\Crefname{Lemma}{Lemma}{Lemmas}
\crefname{Cor}{Corollary}{Corollaries}
\Crefname{Cor}{Corollary}{Corollaries}
\crefname{Prop}{Proposition}{Propositions}
\Crefname{Prop}{Proposition}{Propositions}
\crefname{Def}{Definition}{Definitions}
\Crefname{Def}{Definition}{Definitions}
\crefname{Rem}{Remark}{Remarks}
\Crefname{Rem}{Remark}{Remarks}

\newcommand{\T}{\mathbb T}
\newcommand{\R}{\mathbb R}
\newcommand{\Z}{\mathbb Z}

\newcommand{\cH}{\mathcal H}
\newcommand{\cP}{\mathcal P}
\newcommand{\cM}{\mathcal M}
\newcommand{\cG}{\mathcal G}
\newcommand{\Id}{\mathrm{Id}}
\newcommand{\df}{\mathrm{df}}
\newcommand{\norm}[2][]{\left\lVert #2\right\rVert_{#1}}
\newcommand{\ip}[2]{\left\langle #1,#2\right\rangle}
\newcommand{\Sym}{\operatorname{Sym}}
\newcommand{\so}{\mathfrak{so}}
\newcommand{\Hom}{\operatorname{Hom}}

\title[Stability of Diophantine minimal foliations]
{Stability of Diophantine Minimal Foliations\\
in Arbitrary Codimension}
\author{Hoan Nguyen}
\address{Department of Mathematics, The University of Chicago, Chicago, IL 60615}
\email{nth@uchicago.edu}
\date{}

\begin{document}

\begin{abstract}
We generalize Moser's stability theorem for minimal hypersurface foliations
to arbitrary codimension. Let $\T^N=\R^N/\Z^N$ be the standard torus, and let
$P\subset\R^N$ be an $m$-plane satisfying a Diophantine condition relative
to $\Z^N$. We prove that, under every sufficiently small smooth
$\Z^N$-periodic perturbation of the Euclidean metric, the affine foliation
of $\T^N$ induced by $P$ persists as a smoothly conjugate
foliation by minimal $m$-dimensional submanifolds. The normalized conjugating
displacement depends smoothly and tamely on the metric and is locally unique. The proof uses Hamilton's formulation
of the Nash-Moser implicit function theorem.
\end{abstract}

\maketitle

\section{Introduction}
\label{sec:introduction}

Let $\T^N=\R^N/\Z^N$ be the standard torus and let
$g_0=\sum_{\alpha=1}^N dz_\alpha^2$ be its Euclidean metric.
We study foliations induced by affine
$m$-planes, where $1\le m<N$. For any $m$-plane $P\subset \mathbb{R}^N$, the parallel translations $P+y, y\in P^\perp$ foliate $\mathbb{R}^N$. Following Moser
\cite{Moser1988}, we call such a foliation \emph{stable} if every sufficiently
small smooth periodic perturbation of the ambient metric admits a minimal foliation
smoothly conjugate to the original one.

Note that not every affine foliation is stable. In the codimension-one case, it is easy to see that a foliation by rational parallel hyperplanes disintegrates into a collection of finitely many minimal hypersurfaces under arbitrarily small perturbation. On the other hand, Moser  \cite{Moser1988} proved that a Diophantine foliation of a flat torus by parallel minimal
hypersurfaces is stable. His theorem belongs to a broader
theory of periodic variational problems developed in \cite{Moser1986}. The
proof replaces the family of leafwise Euler-Lagrange equations by one
degenerate equation on the ambient torus. Its linearization at minimal hypersurfaces is the Jacobi operator. At $g_0$, it becomes the Laplacian along the affine plane, so its Fourier representation involves small
divisors, which are controlled by the Diophantine condition. Moser derived coefficient-dependent Sobolev estimates and carried
out a Newton-smoothing iteration based on his rapidly convergent scheme
\cite{Moser1966I,Moser1966II}. He also observed that an abstract generalized
implicit-function theorem could be used in principle, as in
\cite{Zehnder1975}.

A special higher-codimensional persistence result was subsequently obtained
by Moser for foliations of almost complex tori by pseudoholomorphic curves
\cite{Moser1995}. Those leaves have real dimension two and codimension
$2n-2$ in $\T^{2n}$. As emphasized by Bangert \cite{Bangert1995}, they are homologically
area-minimizing for a metric associated with the almost complex structure. The special Cauchy-Riemann structure of that problem is
not available for general minimal submanifolds. 

At the opposite endpoint, when $m=1$, the leaves are geodesics, and the
stability statement is a classical consequence of KAM theory. A
Diophantine affine geodesic foliation corresponds to an invariant
Lagrangian graph of the geodesic Hamiltonian. The persistence of that graph and
the fact that its projection remains a diffeomorphism produce the nearby
geodesic foliation. The configuration-space formulation of Salamon and
Zehnder is particularly close to the present approach
\cite{SalamonZehnder1989}, see also \cite{Poschel2001}. For a survey of how KAM and Aubry-Mather theory relate to the problems of geodesics on $\mathbb{T}^2$, see the article \cite{Bangert1988Mather} of Bangert.

A further distinction between codimension one and higher codimension is
variational. A transversely oriented minimal hypersurface foliation is
calibrated by contraction of the ambient volume form with its unit normal,
so its leaves are area-minimizing among compactly supported competitors
\cite{HarveyLawson1982}. General higher-codimensional minimal foliations have
no analogous minimizing property. In codimension one the minimizing
structure also underlies Bangert's gap phenomenon: sufficiently large
perturbations can create gaps in the family of minimizers and replace a
foliation by a lamination \cite{Bangert1987,Bangert1989}. For more recent complementary results relating minimal foliation to the global geometry of the Riemannian metric, see works of Marques, Neves and Sun \cite{MarquesNevesSun2026} and Nguyen \cite{NguyenStableNorm2026, NguyenMinMax2026}.

There is also an extensive literature on the related realization problem.
Given a foliation, determine whether there exists some ambient metric that makes every leaf
minimal. For oriented foliations, this is one of the standard formulations of
tautness. Fundamental criteria were established by Rummler and Sullivan,
with Sullivan giving a homological characterization in terms of foliation
cycles, and Haefliger later formulated a criterion in terms of transverse
holonomy \cite{Rummler1979,Sullivan1979,Haefliger1980}. In that problem the
foliation is fixed, and we solve for the metric. Here the metric is prescribed
and perturbed, while the foliation moves within its smooth conjugacy class.

In this paper, we prove the corresponding stability theorem for affine foliations of
arbitrary leaf dimension and codimension under general smooth periodic
Riemannian perturbations. The arithmetic hypothesis has the same origin as
in KAM theory: it bounds the inverse leafwise frequencies polynomially and
therefore causes only a fixed finite loss of ambient derivatives. Related
small-divisor mechanisms occur in configuration-space KAM theory, Kozlov's
reducibility theory, and Zehnder's generalized implicit-function theorem
\cite{SalamonZehnder1989,Kozlov1983,Zehnder1975}. This derivative loss is the
reason for working in Hamilton's tame Fr\'echet category rather than in a
single Banach space.

Fix an orthogonal splitting
\[
  \R^N=P\oplus Q,
  \qquad \dim P=m,
  \qquad \dim Q=q=N-m.
\]
The precise Diophantine condition is stated in
\cref{def:diophantine}. Here and below, $C^\infty_0$ denotes the space of smooth maps on the torus with zero average. The main theorem of the paper is the following.

\begin{Th}
\label{thm:main}
Assume that $P$ is Diophantine relative to $\Z^N$. Then there exist an
integer $r_0$, a number $\varepsilon>0$, a $C^\infty$-open neighborhood
$\cG$ of $g_0$ in the space $\cM^\infty$ of smooth
$\Z^N$-periodic Riemannian metrics, and a smooth tame map
\[
  \cG\longrightarrow C^\infty_0(\T^N;Q),
  \qquad g\longmapsto u(g),
\]
with $u(g_0)=0$, such that
\[
  \left\{g\in\cM^\infty:
  \norm[H^{r_0}]{g-g_0}<\varepsilon\right\}\subset\cG.
\]
For every $g\in\cG$, the equivariant map
\[
  \Phi_{u(g)}(z)=z+u(g)(z)
\]
is a diffeomorphism of $\R^N$ commuting with the $\Z^N$-action, and every
leaf
\[
  \Sigma_y(g)=\Phi_{u(g)}(P+y),
  \qquad y\in Q,
\]
is an embedded minimal $m$-submanifold of $(\R^N,g)$. The family is
$\Z^N$-invariant and descends to a smooth minimal foliation of
$(\T^N,g)$ conjugate to the affine foliation. In particular, the affine foliation determined by $P$ is stable, in
Moser's sense, under sufficiently small smooth $\Z^N$-periodic
perturbations of the Euclidean metric.

Moreover, after possibly shrinking $\cG$, there exist an integer
$K_\mathcal{G}\ge0$ and a number $\eta>0$ such that the normalized graph
displacement is locally unique. More precisely, if $g\in\cG$ and
$\widetilde u\in C^\infty_0(\T^N;Q)$ determines a minimal graph
foliation and satisfies
\[
  \norm[H^{K_\mathcal{G}}]{\widetilde u-u(g)}<\eta,
\]
then $\widetilde u=u(g)$.
\end{Th}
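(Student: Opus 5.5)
We recast the statement as a zero-finding problem for a single nonlinear operator on the ambient torus, in the spirit of Moser's codimension-one argument. For $u\in C^\infty(\T^N;Q)$ small in $C^1$, the map $\Phi_u(z)=z+u(z)$ is an equivariant diffeomorphism close to the identity, and the leaves $\Phi_u(P+y)$ are graphs over $P$ in the $Q$-direction. For each $z\in\R^N$ we let $\mathcal{E}(g,u)(z)\in Q$ be the $Q$-component, transported back by $d\Phi_u$, of the mean curvature vector of the leaf through $\Phi_u(z)$ in the metric $g$. Here we use the $Q$-component rather than the $g$-normal one, which is harmless since the normal projection restricted to $Q$ is an isomorphism near $g_0$. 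Then $\mathcal E(g,u)$ is a smooth $\Z^N$-periodic $Q$-valued function, and $\mathcal E(g,u)=0$ is exactly the condition that every leaf is minimal. The operator $\mathcal E$ is a second-order quasilinear differential operator in $u$, with coefficients depending smoothly on $g$ and $\nabla u$. By standard estimates for composition and products in Sobolev spaces, $\mathcal E$ is a smooth tame map between the graded Fréchet spaces $H^\infty$.

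The key linear step is the following. At $(g_0,0)$ the linearization is $D_u\mathcal E(g_0,0)v=\Delta_P v$, the Laplacian along $P$ acting componentwise. On Fourier modes $e^{2\pi i\langle k,z\rangle}$, $k\in\Z^N$, this is multiplication by $-4\pi^2|\pi_P k|^2$. The Diophantine condition (\cref{def:diophantine}) gives $|\pi_P k|\ge c|k|^{-\tau}$ for $k\ne0$. Hence $\Delta_P$ is invertible from $C^\infty_0$ to $C^\infty_0$ with a fixed derivative loss $2\tau-2$, which is tame.

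The zero mode needs separate treatment. We would therefore solve $\Pi_0\mathcal E(g,u)=0$ with $u\in C^\infty_0$, where $\Pi_0$ removes the average, and then show that the average of $\mathcal E$ vanishes automatically. To do this we would write $\mathcal E(g,u)$, after multiplication by a positive Jacobian-type density $\rho(g,u)$, as a $P$-divergence plus terms that vanish at solutions. This is the first variation formula: integrating the leafwise Euler--Lagrange expression over $\T^N$ is the derivative of total area under the translations $u\mapsto u+c$, $c\in Q$, and total area is invariant under these translations by periodicity. Concretely, $\int\langle\rho\,\mathcal E(g,u),c\rangle=\frac{d}{dt}\mathcal A_g(u+tc)=0$. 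So if $\Pi_0\mathcal E=0$, then $\mathcal E$ is a constant vector $a$ with $\int\rho\,\langle a,c\rangle=0$ for all $c$, and positivity of $\rho$ gives $a=0$. Because we would modify the equation to $\rho\,\mathcal E$ before projecting, we would adjust the projected operator accordingly.

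The main obstacle is invertibility of $D_u\mathcal E(g,u)$ on a whole neighborhood, with tame estimates, as Hamilton's theorem requires. Away from $(g_0,0)$ the linearization is a variable-coefficient degenerate elliptic operator $L$ (elliptic only along $P$), so Fourier diagonalization is unavailable. Our plan is to use the conjugation trick from Moser and from configuration-space KAM theory (Salamon--Zehnder).
\begin{enumerate}[label=(\roman*)]
\item Write $L$ in divergence form $L=\rho^{-1}\partial_P^{*}(A\,\partial_P\,\cdot)+(\text{terms vanishing when }\mathcal E=0)$, using the Jacobi operator structure of the leafwise second variation.
\item Use approximate inverses, as permitted by Zehnder's and Hamilton's formalisms, so that the error terms proportional to $\mathcal E(g,u)$ need not be handled exactly.
\item Invert the principal part by an energy estimate along $P$. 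This rests on Poincaré-type control of $\partial_P v$ in terms of $\Delta_P$, which holds at every Sobolev level with fixed loss: commute $\partial^\alpha$ through, and treat commutators tamely as in Moser's coefficient-dependent estimates.
\end{enumerate}
If the energy method fails because $A$ is only a small perturbation of the identity, we would instead invert by a Neumann series against $\Delta_P^{-1}$. This gives only approximate inverses, but that suffices for Zehnder's version.

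With a smooth tame family of inverses in hand, the Nash--Moser--Hamilton theorem yields the smooth tame solution map $g\mapsto u(g)$ on a neighborhood $\cG$. The open neighborhood can be taken to contain an $H^{r_0}$-ball, because Hamilton's neighborhoods are determined by finitely many seminorms. For small $u$ in $C^1$, $\Phi_{u}$ is a diffeomorphism, since $\Id+du$ is invertible and the map is proper, being a periodic perturbation of the identity. Hence the leaves are embedded, and the family descends to a foliation of $\T^N$. Local uniqueness comes from the injectivity part of the Nash--Moser argument: if $\mathcal E(g,\tilde u)=0$, then Taylor's formula with the tame inverse at $u(g)$ gives $\|\tilde u-u\|_{K}\le C\|\tilde u-u\|_{K'}^2$ for suitable $K$ and $K'$. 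Interpolation then forces $\tilde u=u$ once $\|\tilde u-u\|_{H^{K_\cG}}<\eta$.
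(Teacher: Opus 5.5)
There is a genuine gap at your step (i), which is exactly where higher codimension departs from Moser's case. Your overall architecture (fixed-range projection, Hamilton's quadratic-error theorem, a leafwise energy estimate with Diophantine Poincar\'e and commutators, uniqueness by Taylor plus interpolation) matches the paper. The problem is the justification for (i), ``the Jacobi operator structure of the leafwise second variation'': it does not give a divergence-form operator plus terms vanishing at solutions. For $q\ge2$ there is no positive scalar Jacobi field. The relabelling Jacobi fields are the columns of $J=I_Q+\partial_Qu$, and the substitution $v=Jw$ gives
\[
  J^T\Bigl(D_u\cH(u,g)[Jw]-\sum_{a=1}^q(\partial_a\cH)w^a\Bigr)
  =-\sum_{i,j=1}^mD_i\bigl(M^{ij}D_jw\bigr)-\sum_{j=1}^mK^jD_jw,
\]
with $M^{ij}=J^TA^{ij}J$ and a skew current $K^j\in\so(q)$. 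For $q=1$ one has $K=0$. For $m=1$, the vanishing of $D_1K^1$ at solutions, together with $\int K=0$ and density of leaves, forces $K=0$ there. This is why the Moser and Salamon--Zehnder pictures work.

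For $m,q\ge2$, however, the normal-curl identity $(K^i)_{ab}=\partial_b\rho_{i,a}-\partial_a\rho_{i,b}$ only shows that $K$ has mean zero and that $\sum_iD_iK^i$ is linear in $\partial\cH$. The divergence-free part of $K$ can survive at exact solutions. That term cannot go into Hamilton's quadratic error, which vanishes when $\cP=0$. Nor can it simply be absorbed in your energy estimate: bounding $\int w^TK^jD_jw$ by $\norm[L^2]{D_Pw}^2$ would need a Poincar\'e inequality without derivative loss.

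The missing idea is the leafwise Hodge splitting $K=K_{\df}+K_\parallel$. The part $K_\parallel=T_u(\cP(u,g))$ is linear in the residual and so belongs in $\mathscr Q$. The part $K_{\df}$ is written as $K_{\df}^j=\sum_iD_i\Theta^{ij}$, with a potential $\Theta$ skew in both index pairs and built using the Diophantine condition, and is absorbed into the block-symmetric coercive coefficient $C=M+\Theta$. Only then does your energy step (iii) apply. Your Neumann-series fallback does not repair this. The operator $(L-\Delta_P)\Delta_P^{-1}$ loses derivatives, so the series does not converge in any fixed Sobolev space. A truncated series leaves an error controlled by the distance to $(0,g_0)$ rather than by the residual, which Zehnder's and Hamilton's theorems do not allow.

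Your zero-mode argument also uses the wrong symmetry. The map $u\mapsto u+c$ translates the foliation rigidly, and $\mathcal A(u+c,g)=\mathcal A(u,g(\cdot+c))$, which depends on $c$ in general. So $\frac{d}{dt}\mathcal A_g(u+tc)$ does not vanish, and the identity you use fails. The area is invariant under the relabelling $T_cu(z)=c+u(z+c)$, whose generator is $Jc$. This gives $\int_{\T^N}J^T\cH(u,g)\,dz=0$, a $u$-dependent matrix-weighted constraint. The implication $\Pi_0\cH=0\Rightarrow\cH=0$ then follows from $\int J\,dz=I_Q$, not from positivity of a scalar density. The same symmetry, through $\cH(T_cu,g)(z)=\cH(u,g)(z+c)$, is what makes the columns of $J$ Jacobi fields, which is what the conjugation in (i) must use. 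A minor point: $\Delta_P^{-1}$ loses $2\tau$ ambient derivatives, not $2\tau-2$, since nothing is gained transversally to $P$.
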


\begin{Rem}
\label{rem:general-lattice}
The theorem and proof also apply to $\T^N_\Gamma=\R^N/\Gamma$ for any
fixed full lattice $\Gamma$, with the Euclidean reference metric.
One replaces the translation lattice $\Z^N$ by $\Gamma$ and the Fourier
index set by
\[
  \Gamma^*=\{\xi\in(\R^N)^*:\xi(\gamma)\in\Z
                 \text{ for every }\gamma\in\Gamma\},
\]
and normalizes Haar measure to have total mass one. The Diophantine
condition is imposed on the tangential projections of nonzero elements
of $\Gamma^*$, and the constants may depend on the fixed lattice.
The same modifications apply to the variational extension in
\cref{sec:general-integrands}.
\end{Rem}

\begin{Rem}
In \cite[Theorem~1.1]{ChierchiaFalcolini1996}, Chierchia and Falcolini
proved the existence of analytic quasiperiodic solutions for
semilinear elliptic systems
\[
  \Delta_y v=\varepsilon\nabla_x f(v,y),
\]
where $f$ is analytic and periodic, under a Diophantine condition
on the frequency matrix. Their theorem allows arbitrary numbers
of independent variables and components. For sufficiently small
$\varepsilon$, the periodic correction in their parameterization is jointly analytic in the
torus variables and the perturbation parameter. The associated integrand has the restricted form
$\frac12|p|^2+\varepsilon f(x,y)$, with fixed quadratic dependence
on the slope. Their proof establishes convergence of a formal power series
through a tree expansion and cancellations among small-divisor
contributions. In \cite[Remark~1.3]{ChierchiaFalcolini1996}, they note the
limitations of their method for nonanalytic data.
The present theorem works in the $C^\infty$ category, with
smallness measured in one fixed finite Sobolev norm.
The variational extension in \cref{sec:general-integrands} treats
smooth integrands with general slope dependence, including the
Riemannian area integrand near a flat metric.
We do not address the analytic dependence established in
their theorem.
\end{Rem}

\medskip
\noindent\textbf{Sketch of the proof.}
We describe nearby foliations by equivariant maps
$\Phi_u(z)=z+u(z)$, where $u$ is a periodic $Q$-valued
function. The minimality of every leaf is equivalent to
the vanishing of their mean curvatures and is equivalent to the vanishing of an Euler-Lagrange map $\mathcal H(u,g)$
on the ambient torus. The transverse relabelling identity shows that projecting this map to the zero-average space gives the equivalent condition on the vanishing of the projection
$\mathcal P(u,g)=\Pi_0\mathcal H(u,g)$.

We want to apply the Nash-Moser implicit function theorem for $\mathcal{P}$. The main difficulty is the inversion of the linearization. At an exact solution, the linearization is a Jacobi operator on each leaf, and transverse relabeling
generates Jacobi fields given by the columns of
$J=I_Q+\partial_Q u$. The substitution $v=Jw$ sends these Jacobi fields to constant fields and transforms the linearization. In codimension one, the resulting operator is
already in divergence form. However, in higher codimension, a
first-order term with skew-symmetric matrix coefficients
$K^j$ remains. We show that it can be decomposed into a divergence-free part and a part that is
linear in $\mathcal P(u,g)$. The divergence-free part need
not vanish at an exact solution. However, we absorb it into the
second-order coefficient through a matrix-valued potential
$\Theta^{ij}$ and obtain:
\[
  \mathcal{L}_Cw
  =
  -\sum_{i,j=1}^{m}D_i\bigl(C^{ij}D_jw\bigr),
  \qquad
  C^{ij}=M^{ij}+\Theta^{ij}.
\]
The coefficient $C$ is block-symmetric and coercive near $(u,g)=(0,g_0)$.

At $(u,g)=(0,g_0)$, this is just the usual Laplacian along $P$. The Diophantine condition gives a tame inverse for this Laplacian on zero-average fields, with a fixed finite loss of ambient derivatives. Solving for the inverse of $\mathcal{L}_C$ yields an approximate right inverse $V$
satisfying
\[
  D_u\mathcal P(u,g)V(u,g)h
  =
  h+\mathscr Q(u,g)\{\mathcal P(u,g),h\},
\]
where $\mathscr Q$ is smooth tame and bilinear in its last
two arguments. Hamilton's Nash--Moser implicit-function
theorem with quadratic error then gives the desired
foliation and smooth tame dependence on the metric.
Finally, the two-sided inverse at an exact solution,
Taylor's formula, and Sobolev interpolation give local
uniqueness of the normalized displacement.
\medskip

The paper is organized as follows. \Cref{sec:tame-hamilton} records the tame
Fr\'echet framework and the two results of Hamilton that will be used later.
\Cref{sec:geometry} formulates the graph Euler-Lagrange equation and replaces
its moving compatibility condition by a fixed-range space. In
\cref{sec:linearization}, we study the linearization of the \textit{Euler-Lagrange} map, particularly the decomposition of the skew current $K$. \Cref{sec:inverse-main} proves tame invertibility and smooth tame dependence of the inverse of the transformed operator. In \cref{sec:approx,sec:nashmoser} we construct the
approximate right inverse with quadratic error, then apply Hamilton's theorem to prove the main theorem.
\Cref{sec:general-integrands} records a controlled extension to periodic
graph integrands satisfying the strong ellipticity condition. Sobolev interpolation,
local homogeneous tame estimates, and the composition arguments are collected
in Appendix~A. Appendix~B proves the commutator estimate used in \Cref{sec:inverse-main}.
\\

\noindent\textbf{Acknowledgements}. I am deeply grateful to my advisor, Andr\'{e} Neves, for suggesting the research questions that led to this work, and for his many helpful discussions, guidance, and support throughout the project.
\\

\noindent\textbf{AI usage statement}. AI tools were used for checking the argument and to make some improvements to the presentation of the paper. The author retains exclusive responsibility for the correctness and validity of all the results presented in this work. 

\section{Hamilton's tame implicit function theorem}
\label{sec:tame-hamilton}

We use Hamilton's tame Fr\'echet calculus in the form developed in
\cite[Parts~II and III]{Hamilton1982}. Only the conventions needed later are
recalled here. All spaces are real, and vector- and matrix-valued norms are
taken componentwise in the fixed Euclidean frames. For further details, see Hamilton's exposition.

A \emph{grading} on a Fr\'echet space \(F\) is an increasing sequence of
seminorms \(\norm[n]{\cdot}\) defining its topology. A linear map
\(T:F\to G\) between graded Fr\'echet spaces is \emph{tame} if there
are integers \(r,b\ge0\) such that
\[
  \norm[G,n]{Tx}
  \le
  C_n\norm[F,n+r]{x},
  \qquad n\ge b.
\]
Following Hamilton, we call \(r\) the degree and \(b\) the base of the
estimate. In the differential-operator setting, \(r\) measures the fixed
loss of derivatives.

For an open set $U\subset F$, a continuous nonlinear map \(\Phi:U\to G\) is \emph{tame} if near every
\(x_0\in U\), there are integers \(r,b\ge0\) and constants \(C_n\) such
that
\[
  \norm[G,n]{\Phi(x)}
  \le
  C_n\bigl(1+\norm[F,n+r]{x}\bigr),
  \qquad \forall n\ge b.
\]
The map is \emph{smooth tame} if it is smooth and for every \(k\ge0\),
the Fr\'echet derivative map
\[
  D^k\Phi:U\times F^k\longrightarrow G
\]
is tame.

For each fixed derivative order, the tameness of the derivative is equivalent to the following multilinear
estimates locally near a chosen base point: for each
$x_0\in U$ and each integer $k\ge0$, there are a neighborhood $U_k$ of
$x_0$, integers $b_k,r_k\ge0$, and constants $C_{n,k}$ such that, for
$n\ge b_k$,
\begin{align}
 &\norm[G,n]{D^k\Phi(x)[h_1,\ldots,h_k]}\notag\\
 &\quad\le C_{n,k}\Bigg[
 \bigl(1+\norm[F,n+r_k]{x}\bigr)
 \prod_{j=1}^k\norm[F,b_k]{h_j}
 +\sum_{\ell=1}^k\norm[F,n+r_k]{h_\ell}
 \prod_{\substack{j=1\\j\ne\ell}}^k\norm[F,b_k]{h_j}
 \Bigg]
\label{eq:uniform-tame-form}
\end{align}
for $x\in U_k$ and all $h_1,\ldots,h_k\in F$. For $k=0$, the empty
product is $1$, and the sum is absent. These homogeneous estimates are obtained
by rescaling the direction variables in the local tame estimates. The
linear and bilinear cases are recorded in
\cref{lem:homogeneous-tame-families}, following
\cite[Part~II, Lemmas~2.1.7--2.1.8]{Hamilton1982}.

We use the following two theorems of Hamilton. The first upgrades a continuous tame family
of inverses to a smooth tame family.

\begin{Th}[Hamilton's smooth tame inverse-family theorem]
\label{thm:hamilton-inverse-family}
Let $F,H,K$ be tame Fr\'echet spaces, let $U\subset F$ be open, and let
\[
  L:U\times H\longrightarrow K
\]
be smooth tame and linear in the second variable. Suppose that for every
$f\in U$ and $k\in K$ the equation $L(f)h=k$ has a unique solution
$h=V(f)k$, and that $V: U\times K\to H$ is continuous and tame. Then $V$ is
smooth tame.
\end{Th}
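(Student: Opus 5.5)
The plan is to follow Hamilton's argument \cite[Part~II, Theorem~3.1.1]{Hamilton1982}. We obtain an explicit formula for the derivative of $V$ in terms of $V$ and the derivative of $L$. Then we bootstrap: continuity gives $C^1$, and $C^k$ gives $C^{k+1}$. Tameness is preserved at each step because compositions of tame maps are tame. Throughout, smoothness is understood in Hamilton's Fr\'echet sense: directional derivatives exist and are jointly continuous in all variables.

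\emph{Step 1: a resolvent identity.} Fix $f_0,f_1\in U$ with the segment $f_t=f_0+t(f_1-f_0)$ contained in $U$, and fix $k\in K$. Put $h_i=V(f_i)k$. Then $L(f_1)h_1=k=L(f_0)h_0$. Uniqueness of solutions gives $V(f_1)L(f_1)h=h$ for every $h\in H$, since $h$ is the unique solution of $L(f_1)x=L(f_1)h$. Applying this, we get
\[
  V(f_1)k-V(f_0)k=-V(f_1)\bigl(L(f_1)-L(f_0)\bigr)V(f_0)k .
\]
Since $L$ is $C^1$, the fundamental theorem of calculus for Fr\'echet-valued curves gives
\[
  L(f_1)-L(f_0)=\int_0^1 DL(f_t)\{f_1-f_0\}\,dt .
\]
We use the notation $DL(f)\{g,h\}$ for the derivative in $f$ in direction $g$ applied to $h$; this map is linear in $h$.

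\emph{Step 2: first derivative.} Take $f_1=f_0+sg$ and divide by $s$. This gives
\[
  \frac{V(f_0+sg)k-V(f_0)k}{s}
  =-V(f_0+sg)\int_0^1 DL(f_0+tsg)\{g,V(f_0)k\}\,dt .
\]
As $s\to0$, the integral converges by joint continuity of $DL$. Then $V(f_0+sg)$ applied to it converges by joint continuity of $V$ in $(f,k)$, since both the operator argument and the vector converge. Hence
\[
  DV(f)\{g,k\}=-V(f)\,DL(f)\{g,V(f)k\}.
\]
The right-hand side is a composition of the continuous tame maps $V$ and $DL$, so it is jointly continuous and tame. Thus $V$ is $C^1$ and $DV$ is tame; linearity of $DV$ in $(g,k)$ is automatic.

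\emph{Step 3: induction and the main obstacle.} Suppose $V$ is $C^k$ with all $D^jV$ tame for $j\le k$. Differentiate the formula of Step~2 using the chain and product rules in Hamilton's category. Each $D^{k+1}V$ is then a finite sum of terms of the form
\[
  V(f)\,D^{a}L(f)\{\dots, D^{b}V(f)\{\dots\}\}.
\]
Here all the derivatives $D^bV$ occurring are of order at most $k$, and $D^aL$ is smooth tame. Such compositions are continuous and tame, so $V$ is $C^{k+1}$ with tame derivative.

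The main obstacle is the justification of Step~2 in the Fr\'echet setting. There is no operator-norm continuity of $f\mapsto V(f)$. The passage to the limit must therefore rely only on \emph{joint} continuity of $V$ as a map $U\times K\to H$, together with convergence of the vectors being fed into it. This is exactly why the hypothesis asks for continuity of $V$ on $U\times K$ rather than pointwise continuity. I would also need to check that the limit is uniform enough for the derivative to be continuous in Hamilton's sense; this follows again from joint continuity of the composite expression.
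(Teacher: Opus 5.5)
Your proposal is correct and is essentially the argument the paper relies on. The paper gives no proof of its own and simply cites Part~II, Theorem~3.1.1 of \cite{Hamilton1982}, whose proof is the same resolvent identity leading to $DV(f)\{g,k\}=-V(f)\,DL(f)\{g,V(f)k\}$, followed by induction on the order of differentiability and the closure of tame maps under composition. One small point should be added: smoothness of $V$ on $U\times K$ also needs the $k$-direction term $V(f)\ell$, and your difference-quotient argument gives it, by the same joint continuity of $V$, once you vary $k_s=k+s\ell$ together with $f_s=f+sg$.
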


This is part~II, Theorem~3.1.1 in \cite{Hamilton1982}. 

\begin{Th}[Implicit function theorem with quadratic error]
\label{thm:hamilton}
Let $F,G,H$ be tame Fr\'echet spaces, $U\subset F \times G$ be an open set, and
\[
  A:U\subset F\times G\longrightarrow H
\]
be a smooth tame map. Suppose there are smooth tame maps
\[
  V:U\times H\longrightarrow F,
  \qquad
  \mathscr Q:U\times H\times H\longrightarrow H,
\]
linear in the last variable of $V$ and bilinear in the last two variables of
$\mathscr Q$, such that
\begin{equation}
  D_fA(f,g)V(f,g)h
  =h+\mathscr Q(f,g)\{A(f,g),h\}.
\label{eq:Hamilton-hyp}
\end{equation}
If $A(f_0,g_0)=0$ for some $(f_0, g_0)\in U$, then after shrinking neighborhoods, there is a smooth
tame map $B$ defined for $g$ near $g_0$ such that
\[
  B(g_0)=f_0,
  \qquad
  A(B(g),g)=0.
\]
\end{Th}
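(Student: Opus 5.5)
This result is quoted from Hamilton's Nash--Moser theory \cite[Part~III]{Hamilton1982}. The plan is to reduce it to Hamilton's inverse function theorem with an approximate right inverse, by passing to the graph map. Define
\[
  \Psi:U\longrightarrow H\times G,\qquad \Psi(f,g)=(A(f,g),g),
\]
which is smooth tame. Its derivative is $D\Psi(f,g)(\varphi,\gamma)=(D_fA\,\varphi+D_gA\,\gamma,\gamma)$. The candidate approximate right inverse is
\[
  W(f,g)(h,\gamma)=\bigl(V(f,g)[h-D_gA(f,g)\gamma],\,\gamma\bigr),
\]
which is smooth tame and linear in $(h,\gamma)$, since compositions of smooth tame maps are smooth tame. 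By \eqref{eq:Hamilton-hyp},
\[
  D\Psi\,W(h,\gamma)=(h,\gamma)+\bigl(\mathscr Q(f,g)\{A(f,g),\,h-D_gA\,\gamma\},\,0\bigr).
\]
The error is therefore bilinear in the first component of $\Psi(f,g)-(0,g)$ and in the datum, which is exactly the quadratic-error hypothesis of Hamilton's surjectivity theorem for $\Psi$. Once we have a smooth tame local right inverse $\Psi^{-1}$ near $\Psi(f_0,g_0)=(0,g_0)$, we set $B(g)=\pi_F\Psi^{-1}(0,g)$. Then $\Psi^{-1}(0,g)=(B(g),g)$, so $A(B(g),g)=0$ and $B(g_0)=f_0$, provided the right inverse is normalized to send $(0,g_0)$ to $(f_0,g_0)$.

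To prove the surjectivity statement itself, I would run the Newton scheme with smoothing operators $S_\theta$ on $F$ at fixed $g$. The iteration is
\[
  f_{k+1}=f_k-S_{\theta_k}V(f_k,g)A(f_k,g),\qquad \theta_{k}=\theta_0^{(3/2)^k}.
\]
This requires $F$ to be tame, so that smoothing operators exist satisfying
\[
  \norm[n]{S_\theta x}\le C\theta^{n-l}\norm[l]{x},\qquad
  \norm[l]{(I-S_\theta)x}\le C\theta^{l-n}\norm[n]{x}.
\]
By Taylor's formula, the new error splits as
\[
  A(f_{k+1})=A(f_k)-D_fA\,S_{\theta_k}VA(f_k)+R_k .
\]
Using \eqref{eq:Hamilton-hyp}, the linear part equals
\[
  -\mathscr Q\{A(f_k),A(f_k)\}+D_fA\,(I-S_{\theta_k})VA(f_k).
\]
So the new error consists of three pieces: the quadratic term $\mathscr Q\{A,A\}$, the Taylor remainder $R_k$ (quadratic in the step), and a smoothing tail. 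Each is estimated with the homogeneous tame bounds \eqref{eq:uniform-tame-form} and Sobolev-type interpolation of the gradings. The induction hypotheses I would aim for are
\[
  \norm[\text{low}]{A(f_k)}\le\theta_k^{-a}
  \qquad\text{and}\qquad
  \norm[\text{high}]{f_k-f_0}\le\theta_k^{b}
\]
for suitable exponents $a\gg b$. Choosing $\theta_0$ large and the initial error $\norm[\text{low}]{A(f_0,g)}$ small (which holds for $g$ near $g_0$ by continuity) closes the induction. The steps then sum in every norm after interpolation, so $f_k\to B(g)$ in $F$ with $A(B(g),g)=0$, together with a tame bound on $B$.

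For smoothness, I would first show that $B$ is continuous and tame by running the same estimates on differences for nearby $g$. Differentiating $A(B(g),g)=0$ formally gives $D_fA\,DB=-D_gA$. Because $A(B(g),g)=0$, the error term in \eqref{eq:Hamilton-hyp} vanishes along the solution. Hence $-V(B(g),g)D_gA(B(g),g)$ is the candidate for $DB(g)$, and it is smooth tame as a composition. To justify that $B$ is differentiable with this derivative, I would estimate the difference quotient using Taylor's formula and the same quadratic bounds. Higher derivatives then follow inductively by the chain rule, in the spirit of \cref{thm:hamilton-inverse-family}.

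The main obstacle is the convergence estimate of the Newton iteration. One must choose the loss $r$, the base $b$, and the exponents $a,b$ consistently with the degrees of $V$, $\mathscr Q$ and $D^2A$. The aim is for the derivative loss at each step to be beaten by the superexponential decay $\theta_k^{-a}$. I would first fix the degrees from the tame estimates and then choose the exponents by a linear-inequality bookkeeping as in Hamilton's Part~III.
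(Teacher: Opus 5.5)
\textbf{Gap in the smoothness step.} The paper does not prove this statement; it quotes it from \cite{Hamilton1982}, Part~III, Theorem~3.3.1. Your existence sketch is the standard Nash--Moser outline and is fine at that level: smoothed Newton steps $f_{k+1}=f_k-S_{\theta_k}V(f_k,g)A(f_k,g)$, with the new error split into $-\mathscr Q\{A,A\}$, a smoothing tail and a Taylor remainder. The problem is how you identify $DB$.

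The hypotheses give only a \emph{right} inverse of $D_fA$ at zeros, not injectivity. So $D_fA\,DB=-D_gA$ determines $DB$ only modulo $\ker D_fA(B(g),g)$, and $-V(B(g),g)D_gA(B(g),g)$ is just one solution of it. Concretely, Taylor's formula for $\delta=B(g+t\gamma)-B(g)$ gives $D_fA\,\delta=-tD_gA\,\gamma+O(t^2)$. Applying $V$ recovers $VD_fA\,\delta$, which is only a projection of $\delta$. The kernel component $(I-VD_fA)\delta$ is fixed by your construction, not by the equation.

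The formula is in fact false for the $B$ your scheme produces. Take $F=\R^2$, $G=H=\R$, $A(f,g)=f_1-g$, $V(f,g)h=(h,f_1h)$ and $\mathscr Q=0$; all hypotheses hold. With $S_\theta=I$, which is legitimate in finite dimensions, the iteration from $f_0=0$ stops after one step at $B(g)=(g,0)$. Hence $DB(g)=(1,0)$, whereas $-V(B(g),g)D_gA(B(g),g)=(1,g)$.

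Appealing to \cref{thm:hamilton-inverse-family} does not help, because that theorem assumes unique solvability. To close the gap you have two options.
\begin{enumerate}[label=\textup{(\roman*)}]
\item Assume injectivity of $D_fA$ on the zero set. This holds in the paper's application by \cref{cor:linearized-isomorphism}. Then $V$ is a two-sided inverse there, and your difference-quotient argument can be made to work once you have Lipschitz bounds for $B$ in high norms.
\item Prove smoothness of the specific $B$ you constructed. For example, differentiate the iteration in $g$ and show that all $g$-derivatives of the iterates converge with uniform tame bounds. This is exactly what the citation of Hamilton supplies in the paper.
\end{enumerate}

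\textbf{The graph-map reduction is circular.} A local right inverse of $\Psi$ on a neighborhood of $(0,g_0)$ would solve $A(f,g)=k$ for small $k\neq0$. At such points the error term is $\mathscr Q\{k,\cdot\}\neq0$. So your identity for $D\Psi\,W$ gives neither an exact right inverse nor quadratic convergence there; the Newton error is only linear in the residual. The error is quadratic in the residual $\Psi(f,g)-y$ only for targets $y\in\{0\}\times G$, and a surjectivity theorem restricted to such targets is precisely the statement you are proving. Your fixed-$g$ iteration with target $0$ is the actual argument, and you can drop the reduction.
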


This is part~III, Theorem~3.3.1 of \cite{Hamilton1982}. At a zero of $A$,
\eqref{eq:Hamilton-hyp} says that $V$ is an exact right inverse of the
partial derivative.

We now specify the tame Fr\'echet spaces used in our proof.
We identify the dual lattice of $\Z^N$ with $\Z^N$ using the Euclidean
inner product, and use the measure $dz$ on $\T^N$ normalized to have
mass one. For a vector-valued function $f$, set
\begin{equation}
  \norm[H^s]{f}^2
  =
  \sum_{\xi\in\Z^N}
  \langle\xi\rangle^{2s}
  |\widehat f(\xi)|^2,
  \qquad
  \langle\xi\rangle
  =
  \bigl(1+4\pi^2|\xi|^2\bigr)^{1/2}.
\label{eq:Sobolev-grading}
\end{equation}
The corresponding Fr\'echet grading is
\[
  \norm[n]{f}:=\norm[H^n]{f},
  \qquad n=0,1,2,\ldots.
\]
The norm in \eqref{eq:Sobolev-grading} uses the full ambient frequency,
although the foliation equation differentiates only along $P$.

We use Sobolev embedding and the tame product estimate in the following
form, see \cite[\S2, Eqs.~(2.10), (2.18), and~(2.20)]{Moser1988}.
For a fixed integer $s_0>N/2$,
\begin{equation}
  \norm[L^\infty]{f}\le C\norm[H^{s_0}]{f},
\label{eq:sobolev-embedding}
\end{equation}
and, for every integer $s\ge0$,
\begin{equation}
  \norm[H^s]{fg}
  \le C_s\Bigl(
    \norm[H^s]{f}\norm[H^{s_0}]{g}
    +\norm[H^{s_0}]{f}\norm[H^s]{g}
  \Bigr).
\label{eq:tame-product}
\end{equation}
In particular, $H^{s_0}$ is a Banach algebra. These estimates apply
componentwise to vector- and matrix-valued fields on the fixed standard torus.
Sobolev-scale interpolation is recorded in
\cref{lem:sobolev-interpolation}.

Let
\[
  X=Y=C^\infty_0(\T^N;Q),
\]
where the subscript denotes zero average, and define
\[
  \Pi_0f
  =
  f-\int_{\T^N}f\,dz.
\]
The projection $\Pi_0$ is tame linear, so $X$ and $Y$ are tame
Fr\'echet spaces. The space
\[
  G=C^\infty\bigl(\T^N;\Sym^2((\R^N)^*)\bigr)
\]
of smooth symmetric two-tensors is tame, and its open cone of
positive-definite tensors is denoted by $\cM^\infty$.

In our application, $F=X$ and $H=Y$, while the metric parameter
$g$ varies in the open subset $\cM^\infty\subset G$.
The map $A(u,g)=\cP(u,g)$ is defined in \cref{sec:geometry}. Therefore, the proof reduces to constructing smooth tame maps $V$ and
$\mathscr Q$ satisfying \eqref{eq:Hamilton-hyp}.

We also record two composition facts used below. Their proofs are given in \cref{sec:tame-preliminaries}.

\begin{Prop}
\label{prop:composition-evaluation}
Let $V$ be a finite-dimensional vector space.
\begin{enumerate}[label=\textup{(\roman*)}]
\item Fix an integer $s_0>N/2+1$. There is a $C^1$-neighborhood
$\mathcal V$ of zero in $C^\infty(\T^N;\R^N)$ such that
\[
  (F,u)\longmapsto F\circ(\Id+u)
\]
is smooth tame on $C^\infty(\T^N;V)\times\mathcal V$.
On sets with $\norm[H^{s_0}]{u}\le R$, for every integer $s\ge s_0$,
\begin{equation}
\begin{split}
 \norm[H^s]{F\circ(\Id+u)}
 \le C_{s,R}\Bigl(&1+\norm[H^s]{F}\\
 &+\norm[H^s]{u}\bigl(1+\norm[H^{s_0+1}]{F}\bigr)\Bigr),
\end{split}
\label{eq:composition-tame}
\end{equation}
where $\mathcal V$ is chosen with uniform Jacobian bounds.
\item Let $B_{2\rho}\subset\Hom(P,Q)$ be a ball and let
$\mathscr F\in C^\infty(\T^N\times\overline B_{2\rho};V)$
be smooth up to the boundary. For $u\in C^\infty(\T^N;Q)$,
write $D_Pu=du|_P$. On the open set
$\norm[C^0]{D_Pu}<\rho$, the map
\[
  (\mathscr F,u)\longmapsto
  \mathcal E(\mathscr F,u),
  \qquad
  \mathcal E(\mathscr F,u)(z)
  =\mathscr F\bigl(z+u(z),D_Pu(z)\bigr),
\]
is jointly smooth tame in the Sobolev gradings.
\end{enumerate}
\end{Prop}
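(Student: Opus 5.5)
For part (i), the plan is to reduce to the classical composition estimates on the torus by way of the chain rule and the tame product estimate \eqref{eq:tame-product}. First, choose the $C^1$-neighborhood $\mathcal V$ so that $\norm[C^1]{u}$ is small. Then $\Id+u$ is an equivariant diffeomorphism of $\R^N$ whose Jacobian and inverse Jacobian are uniformly bounded. As a consequence, for $F\in L^2$, the change of variables gives $\norm[L^2]{F\circ(\Id+u)}\le C\norm[L^2]{F}$. To reach higher $s$, I would apply Fa\`a di Bruno to $\partial^\alpha(F\circ(\Id+u))$ with $|\alpha|=s$. This writes it as a sum of terms $((\partial^\beta F)\circ(\Id+u))\prod_{i}\partial^{\gamma_i}(z+u)$ with $\sum|\gamma_i|=s$. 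The top term $(\partial^sF)\circ(\Id+u)$ is handled by the $L^2$ change of variables. Every other term carries at least one derivative of $u$ and at most $s-1$ derivatives of $F$. Those terms would be bounded by the standard Gagliardo--Nirenberg/Moser interpolation (\cref{lem:sobolev-interpolation}) together with \eqref{eq:tame-product}. The interpolation places the top derivative either entirely on $u$, with $F$ measured in $C^1\subset H^{s_0+1}$, or on $F$. This yields \eqref{eq:composition-tame}, with the constant depending on $R$ through $\norm[L^\infty]{Du}\lesssim\norm[H^{s_0}]{u}$.

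Smooth tameness then follows from the explicit derivative formulas. These are
\[
D^k_u\bigl(F\circ(\Id+u)\bigr)[h_1,\ldots,h_k]=\bigl((D^kF)\circ(\Id+u)\bigr)[h_1,\ldots,h_k],
\]
which are linear in $F$, plus the mixed derivatives that are obtained by replacing $F$ with a direction $\delta F$. Each of these is a composition of a derivative of $F$ with $\Id+u$, multiplied by the directions. So the same estimate combined with \eqref{eq:tame-product} gives bounds of the form \eqref{eq:uniform-tame-form}. The loss of $k$ derivatives on $F$ only raises the degree $r_k$. Continuity of the map follows from the same estimates applied to differences, via the mean value formula.

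For part (ii), I would handle the extra slope variable by extending $\mathscr F$. Fix a cutoff and use a Seeley extension to extend $\mathscr F$ from $\T^N\times\overline B_{2\rho}$ to a smooth compactly supported function on $\T^N\times\Hom(P,Q)$. This extension is linear and continuous in every $C^k$, and it is tame. On the open set where $\norm[C^0]{D_Pu}<\rho$, the value of $\mathcal E$ is unchanged by the extension. Now view $\mathcal E(\mathscr F,u)$ as the composition of the extended function, which lives on $\T^N\times\Hom(P,Q)$, with the map $z\mapsto(z+u(z),D_Pu(z))$. The same Fa\`a di Bruno/interpolation argument as in (i) applies, with two differences: differentiability is now in the mixed variables, and we have the bound $\norm[H^s]{D_Pu}\le\norm[H^{s+1}]{u}$. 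This gives a tame estimate of degree one more than in (i). In this setting no Jacobian bound is needed for the $L^2$ base case, because we estimate $\norm[L^\infty]{\mathscr F}$ directly rather than changing variables. Alternatively, the top-order term $(\partial_z^\beta\partial_p^\gamma\mathscr F)(\cdot)$ is only ever multiplied by derivatives of $u$, so it can be controlled in $L^\infty$ by $\norm[C^{s}]{\mathscr F}$. Since $\mathscr F$ is measured in a $C^k$-type grading on the compact set, the tame bound follows.

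The main obstacle is the top-order term in part (i): controlling $(\partial^sF)\circ(\Id+u)$ in $L^2$ using only $\norm[H^s]{F}$ without any loss. This is precisely where the $C^1$ smallness of $u$, which gives the change of variables with bounded Jacobian, is essential. The mixed terms, in contrast, are routine interpolation.
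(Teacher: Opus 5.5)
Your proposal is correct and follows the paper's proof in both parts: the chain rule with Gagliardo--Nirenberg, H\"older, Young and a bounded-Jacobian change of variables for (i), and for (ii) an extension in the slope variable composed with the non-diffeomorphic map $z\mapsto(z+u(z),D_Pu(z))$, estimated through sup-norms of $\mathscr F$ (the paper uses a cutoff and extension by zero instead of Seeley, and cites Hamilton's composition theorems where you differentiate by hand). Two small corrections: in (i) the Jacobian bound is needed not only for the top-order term but also for the intermediate factors $(D^kF)\circ(\Id+u)$, whose $L^{2(s-1)/(k-1)}$ norms must be transferred back to $D^kF$ before applying the $L^p$ Gagliardo--Nirenberg inequality (the Hilbert-scale \cref{lem:sobolev-interpolation} is not the relevant tool there); and in (ii) the $C^k$ norms of $\mathscr F$ must be converted to the Sobolev grading on the $(N+mq)$-dimensional domain, which costs an additional fixed loss $\ell>(N+mq)/2$ in $\mathscr F$.
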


\section{Geometric formulation and the fixed-range equation}
\label{sec:geometry}
Choose orthonormal bases $e_1,\ldots,e_m$ of $P$ and
$n_1,\ldots,n_q$ of $Q$, and write $D_i=\partial_{e_i},
  \partial_a=\partial_{n_a}$ for the partial derivatives. For a $Q$-valued function $u$, let
\[
  D_Pu:=(D_1u,\ldots,D_m u)
\]
be the gradient of $u$ along $P$. 
The leaf indices $i,j,k$ range from $1$ to $m$, normal indices $a,b,c$ range
from $1$ to $q$, and every sum over such indices is written explicitly.

Using the Euclidean metric, we identify covectors and vectors when convenient.
For $\xi\in\Z^N$, set
\[
  \xi_i=\xi\cdot e_i,\qquad
  \xi_P=(\xi_1,\dots,\xi_m).
\]
The corresponding Fourier character is
\[
  e_\xi(z)=e^{2\pi i\xi\cdot z},
\]
and for every smooth function $f\in C^\infty(\mathbb{T}^N)$ we can write
\[f(z)=\sum_{\xi\in \Z^N}\widehat{f}(\xi)e^{2\pi i \xi\cdot z}, \quad \widehat{f}(\xi)=\int_{\mathbb{T}^N} f(z)e^{-2\pi i\xi\cdot z}\,dz .\]
\begin{Def}
\label{def:diophantine}
The plane $P$ is $(\gamma,\tau)$-Diophantine relative to $\Z^N$ if there are
constants $\gamma>0$ and $\tau\ge0$ such that
\begin{equation}
  |2\pi\xi_P|\ge \gamma\langle\xi\rangle^{-\tau}
  \qquad\text{for every }\xi\in\Z^N\setminus\{0\}.
\label{eq:dioph}
\end{equation}
Since this condition remains valid if we replace $\tau$ by a larger exponent, we can assume $\tau\in \mathbb{N}_{>0}$ for simplicity. 
\end{Def}
For $\tau>q/m$, for almost all $P$ there exists a constant $\gamma$ such that $P$ is $(\gamma, \tau)$-Diophantine. See for example \cite{Sprindzuk1979}.
\begin{Rem}
\label{rem:matrix-diophantine}
When
\[
  P=P_A=\{(x,Ax):x\in\R^m\},
  \qquad A\in\R^{q\times m},
\]
the leafwise vector fields in product coordinates $(x,\theta)$ are
\[
  D_i^A=\partial_{x_i}+\sum_{a=1}^q A_{ai}\partial_{\theta_a}.
\]
Their Fourier symbol on the mode
$e^{2\pi i(k\cdot x+\ell\cdot\theta)}$ is
$2\pi i(k+A^T\ell)$.  Hence \eqref{eq:dioph} is equivalent, up to fixed
norm constants, to the matrix Diophantine condition
\[
  |k+A^T\ell|
  \ge \gamma'\langle(k,\ell)\rangle^{-\tau},
  \qquad (k,\ell)\in\Z^m\times\Z^q\setminus\{0\}.
\]
\end{Rem}

\begin{Rem}
\label{rem:dense-leaves}
The Diophantine condition implies that the image of every affine leaf
$P+y$ is dense in $\T^N$. Indeed, no nonzero $\xi\in\Z^N$
is orthogonal to $P$, so the closure of the image of $P$ has trivial
annihilator and is the whole torus.
\end{Rem}

\subsection{Graphical perturbations of the affine foliation}
\label{sec:euler}

For a periodic map $u:\T^N\to Q$, viewed as a $\Z^N$-periodic map on
$\R^N$, define
\begin{equation}
  \Phi_u(z)=z+u(z).
\label{eq:Phi}
\end{equation}
Then
\[
  \Phi_u(z+k)=\Phi_u(z)+k,
  \qquad \forall k\in\Z^N.
\]
Relative to the fixed splitting $P\oplus Q$, we have
\begin{equation}
 D\Phi_u=
 \begin{pmatrix}
 I_P&0\\
 \partial_Pu&I_Q+\partial_Qu
 \end{pmatrix}.
\label{eq:block-DPhi}
\end{equation}
Hence $\det D\Phi_u=\det(I_Q+\partial_Qu)$.  If $\norm[C^1]{u}$ is small,
$D\Phi_u$ is invertible everywhere.  The induced map of the compact torus is
a local diffeomorphism homotopic to the identity, hence a covering of degree
one and therefore a diffeomorphism.  Its lift is an equivariant
diffeomorphism of $\R^N$.

The map $\Phi_u$ maps each affine translate $P+y$ to the submanifold
\begin{equation}
  \Sigma_y(u)=\Phi_u(P+y).
\label{eq:leaves}
\end{equation}
The collection $\{\Sigma_y(u), y\in Q\}$ forms a foliation of $\R^N$. For $z\in P+y$, a basis of $T_{\Phi_u(z)}\Sigma_y(u)$ is given by:
\begin{equation}
  X_i(z)=d\Phi_u(z)[e_i]=e_i+D_i u(z),\qquad 1\le i\le m.
\label{eq:tangent}
\end{equation}

\subsection{Area density and the Euler-Lagrange map}

Let
\[
  \mathcal U\subset C^\infty(\T^N;Q)
\]
be a sufficiently small open neighborhood of the zero map such that
$\Phi_u$ is an equivariant diffeomorphism for every $u\in\mathcal U$.
For $z\in\T^N$, $r\in Q$, and
\[
  p=(p_1,\dots,p_m)\in\Hom(P,Q), p_i\in Q,
\]
define the leaf area density
\[
 L_g:\T^N\times Q\times\Hom(P,Q)\longrightarrow(0,\infty)
\]
by
\begin{equation}
  L_g(z,r,p)=
  \sqrt{\det\bigl(g_{z+r}(e_i+p_i,e_j+p_j)\bigr)_{i,j=1}^m}.
\label{eq:Lg}
\end{equation}

Here $z$ is the reference point, $r$ is the transverse displacement, and
$p$ specifies an $m$-plane transverse to $Q$ as the graph of $p$.  The
expression is interpreted on the universal cover and is independent of the
chosen lift because $g$ is $\Z^N$-periodic.

The pullback of the $g$-area density of the leaf through $\Phi_u(z)$ is given by
\begin{equation}
 \mu_{u,g}(z):=L_g(z,u(z),D_Pu(z)).
\label{eq:area-density}
\end{equation}

Define the averaged graph-area functional
\[
  \mathcal A:\mathcal U\times\cM^\infty\longrightarrow\R
\]
by
\begin{equation}
  \mathcal A(u,g)
  =\int_{\T^N}L_g(z,u,D_Pu)\,dz.
\label{eq:averaged-action}
\end{equation}

The first variational formula for $\mathcal A$ is as follows.
\begin{Lemma}
\label{lem:euler-first-variation}
For every $(u,g)\in\mathcal U\times\cM^\infty$ and every smooth
periodic $v:\T^N\to Q$,
\begin{equation}
  D_u\mathcal A(u,g)[v]
  =\int_{\T^N}
    \bigl\langle\cH(u,g),v\bigr\rangle_0\,dz,
\label{eq:first-variation-global}
\end{equation}
where the map
\[
  \cH:\mathcal U\times\cM^\infty
  \longrightarrow C^\infty(\T^N;Q)
\]
is defined componentwise by
\begin{equation}
  \cH(u,g)_a
  =(L_g)_{r^a}(z,u,D_Pu)
  -\sum_{i=1}^mD_i\!\left((L_g)_{p_i^a}(z,u,D_Pu)\right),
\label{eq:H-definition}
\end{equation}
and $\langle,\rangle_0$ is the standard Euclidean inner product. The same formula holds on a relatively compact domain
$\Omega\Subset P+y$ for compactly supported $Q$-valued graph variations:
\[
  \left.\frac{d}{dt}\right|_{t=0}
  \operatorname{Area}_g\bigl(\Phi_{u+tv}(\Omega)\bigr)
  =\int_\Omega\langle\cH(u,g),v\rangle_0\,d\mathcal H^m_0.
\]
\end{Lemma}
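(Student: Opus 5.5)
The plan is to differentiate under the integral sign and then integrate by parts along the leaf directions, using periodicity to discard boundary terms.

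\emph{Step 1: differentiate the integrand.} Fix $(u,g)$ and a smooth periodic $v$. For small $t$, $u+tv\in\mathcal U$. The integrand
\[
  L_g\bigl(z,u+tv,D_Pu+tD_Pv\bigr)
\]
is jointly smooth in $(z,t)$. Indeed, $L_g$ is the square root of a Gram determinant, and this determinant is positive because $g$ is positive definite and the vectors $e_i+p_i$ are linearly independent; they remain independent since $P\cap Q=0$. The torus is compact, so we may differentiate under the integral sign. The chain rule then gives
\[
  D_u\mathcal A(u,g)[v]
  =\int_{\T^N}\Bigl(\sum_a (L_g)_{r^a}\,v_a
   +\sum_{i,a}(L_g)_{p_i^a}\,D_iv_a\Bigr)dz,
\]
with every derivative evaluated at $(z,u,D_Pu)$. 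Here $v_a$ and $p_i^a$ are the components in the orthonormal frame $n_a$, so the pairing is exactly $\langle\cdot,\cdot\rangle_0$.

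\emph{Step 2: integrate by parts along $P$.} Each $D_i=\partial_{e_i}$ is a constant-coefficient vector field on $\T^N$, and it preserves the normalized Lebesgue measure. Hence
\[
  \int_{\T^N}D_i(fh)\,dz=0
\]
for all smooth periodic $f,h$. Taking $f=(L_g)_{p_i^a}(z,u,D_Pu)$ and $h=v_a$ moves $D_i$ onto the coefficient with a minus sign. Collecting terms yields \eqref{eq:first-variation-global}, with $\cH$ given by \eqref{eq:H-definition}.

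\emph{Step 3: the local version.} Parametrize the leaf by $x\in\Omega\subset P+y$, so that $\Phi_u(x)=x+u(x)$. The pullback of the $g$-area density is $L_g(x,u(x),D_Pu(x))\,d\mathcal H^m_0$, since the tangent vectors are $X_i=e_i+D_iu$ by \eqref{eq:tangent}. On a graph the map $x\mapsto \Phi_{u+tv}(x)$ is injective, so it follows that
\[
  \operatorname{Area}_g\bigl(\Phi_{u+tv}(\Omega)\bigr)=\int_\Omega L_g\bigl(x,u+tv,D_P(u+tv)\bigr)\,d\mathcal H^m_0 .
\]
Differentiate at $t=0$ as in Step 1. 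Then integrate by parts on $\Omega$, using the Euclidean divergence theorem on the affine plane $P+y$ in the coordinates $e_i$. Because $v$ is compactly supported in $\Omega$, the boundary terms vanish.

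\emph{Main obstacle.} There is no real obstacle here. The only point needing care is the independence of the lift: $L_g(z+k,r,p)=L_g(z,r,p)$ for $k\in\Z^N$ because $g$ is periodic. This makes the integrands genuinely periodic, which in turn justifies discarding the total derivatives in Step 2.
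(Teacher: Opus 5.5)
Your proposal is correct and follows the paper's proof: differentiate under the integral sign, integrate by parts along the constant leaf fields $D_i$ using periodicity, and repeat the computation on $\Omega$ with compactly supported $v$ to obtain the local formula. The extra points you check are details the paper leaves implicit: positivity of the Gram determinant via $P\cap Q=0$, injectivity of the graph parametrization so the area equals the pulled-back integral, and independence of the lift.
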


\begin{proof}
Differentiating under the integral gives
\[
  \left.\frac d{dt}\right|_{t=0}
  L_g(z,u+tv,D_Pu+tD_Pv)
  =\sum_{a=1}^q(L_g)_{r^a}v^a
   +\sum_{i=1}^m\sum_{a=1}^q(L_g)_{p_i^a}D_iv^a.
\]
Periodic integration by parts yields \eqref{eq:first-variation-global} and
\eqref{eq:H-definition}. The same calculation on $\Omega$, with $v$
compactly supported, proves the local formula.
\end{proof}

We call $\cH$ the \emph{Euler-Lagrange map}.  Its precise relation to the geometric
mean-curvature vector is given in \cref{prop:H-mean-curvature}. We next show that this map is smooth tame.

\begin{Lemma}
\label{lem:H-smooth-tame}
The Euler-Lagrange map
\[
   \cH:\mathcal U\times\cM^\infty
  \longrightarrow C^\infty(\T^N;Q), \quad (u,g)\longmapsto\cH(u,g), 
\]
is smooth tame. 
\end{Lemma}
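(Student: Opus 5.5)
The plan is to write $\cH$ as a composition of maps already known to be smooth tame, using \cref{prop:composition-evaluation}(ii) as the main tool.

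\emph{Step 1: build the integrand as a smooth tame function of $g$.} Write
\[
  L_g(z,r,p)=\Lambda\bigl(g(z+r),p\bigr),
\]
where $\Lambda(S,p)=\sqrt{\det\bigl(S(e_i+p_i,e_j+p_j)\bigr)_{i,j}}$ for a positive-definite symmetric form $S$ and $p\in\Hom(P,Q)$. The function $\Lambda$ is smooth and real-analytic on the open set where the Gram matrix is positive definite, and this set contains all pairs $(S,p)$ with $S$ positive definite. Consequently $\mathscr F_g(z,p)=\Lambda(g(z),p)$, together with its derivatives $\partial_p\Lambda(g(z),p)$ and $\partial_S\Lambda(g(z),p)$, is a smooth function on $\T^N\times\overline B_{2\rho}$.

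The derivative $(L_g)_{r^a}$ equals $\partial_S\Lambda(g(z+r),p)\cdot(\partial_a g)(z+r)$. It is therefore again of the form $\mathscr F(z+r,p)$, with
\[
  \mathscr F=\partial_S\Lambda(g,p)\cdot\partial_a g .
\]
Both $\mathscr F$ and $\partial_{p_i^a}\Lambda(g,p)$ depend on $g$ through a pointwise smooth nonlinearity followed by at most one derivative. That the maps $g\mapsto\Lambda(g(\cdot),\cdot)$ and $g\mapsto\partial\Lambda(g(\cdot),\cdot)$ are smooth tame into $C^\infty(\T^N\times\overline B_{2\rho};V)$, locally near $g_0$ in the positive cone $\cM^\infty$, is the standard Nash--Moser fact that composition with a smooth nonlinear function is smooth tame (Hamilton, Part II). The product $\mathscr F=\partial_S\Lambda(g,p)\cdot\partial_a g$ is then handled by \eqref{eq:tame-product}. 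Differentiation $\partial_a$ is tame linear of degree one.

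\emph{Step 2: evaluate along the graph.} Apply \cref{prop:composition-evaluation}(ii) to $(\mathscr F,u)\mapsto\mathscr F(z+u,D_Pu)$ on the open set $\norm[C^0]{D_Pu}<\rho$; shrinking $\mathcal U$ if necessary keeps $u$ in this set. Composing the smooth tame map $g\mapsto\mathscr F$ from Step 1 with this jointly smooth tame evaluation map shows, by the chain rule for smooth tame maps, that
\[
  (u,g)\mapsto (L_g)_{r^a}(z,u,D_Pu)
  \quad\text{and}\quad
  (u,g)\mapsto (L_g)_{p_i^a}(z,u,D_Pu)
\]
are smooth tame.

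\emph{Step 3: assemble $\cH$.} Each $D_i$ is a linear differential operator of order one with constant coefficients, hence tame linear of degree one. Subtracting $\sum_i D_i\bigl((L_g)_{p_i^a}\bigr)$ from $(L_g)_{r^a}$ then gives \eqref{eq:H-definition}, and finite sums of smooth tame maps are smooth tame.

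The main obstacle is Step 1. One has to justify that the Fréchet-space-valued map $g\mapsto\mathscr F_g$, with target functions of the extra variable $p$ on the closed ball, is smooth tame with the correct gradings. If the grading on $C^\infty(\T^N\times\overline B_{2\rho})$ used in \cref{prop:composition-evaluation}(ii) is awkward for this, I would instead avoid the intermediate space. I would expand
\[
  \cH(u,g)=\Psi\bigl(g\circ(\Id+u),\ (\partial g)\circ(\Id+u),\ D_Pu,\ D_PD_Pu\bigr)
\]
for a fixed smooth function $\Psi$, using the chain rule for $D_i$ of the composite. Then \cref{prop:composition-evaluation}(i) gives smooth tameness of $g\circ(\Id+u)$ and $(\partial g)\circ(\Id+u)$, and Hamilton's result that a fixed smooth nonlinear pointwise function of finitely many tame arguments is smooth tame finishes the proof.
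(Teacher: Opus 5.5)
Your proof is correct, and your main route differs from the paper's, while your fallback is essentially what the paper does.

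\textbf{The paper's route.} The paper never leaves $\T^N$. It uses Jacobi's formula to write $(L_g)_{r^a}$ and $(L_g)_{p_i^a}$ explicitly as smooth functions of $D_Pu$, $g\circ\Phi_u$ and $(\partial_a g)\circ\Phi_u$. It then gets tameness from \cref{prop:composition-evaluation}\textup{(i)}, the product estimate \eqref{eq:tame-product} and \cref{lem:finite-substitution}. Finally it applies $D_i$ as a tame linear map.

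\textbf{Your main route.} You instead lift the metric to an integrand $\mathscr F_g(z,p)=\Lambda(g(z),p)$ on $\T^N\times\overline B_{2\rho}$ and use the joint evaluation map of \cref{prop:composition-evaluation}\textup{(ii)}. This is exactly the machinery the paper reserves for \cref{cor:general-integrand}.

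\textbf{Comparison.} Your route buys modularity: the Riemannian case becomes a special instance of the general-integrand case, and no explicit first-variation formulas are needed. The price has three parts.
\begin{enumerate}
\item You need an auxiliary tame space of functions smooth up to the boundary of $\T^N\times\overline B_{2\rho}$.
\item You need a separate smooth-tameness argument for $g\mapsto\mathscr F_g$. This is a pullback followed by an order-zero bundle operator on the larger domain, with the product estimate taken there rather than on $\T^N$.
\item Part (ii) carries a built-in nonoptimal fixed loss. The paper's route instead yields the sharp bound $\norm[H^s]{\cH(u,g)}\le C_s\bigl(1+\norm[H^{s+2}]{u}+\norm[H^{s+1}]{g}\bigr)$, which matches the true differential orders.
\end{enumerate}
For Hamilton's theorem any fixed loss suffices, so this is only a matter of economy.

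\textbf{Minor corrections.}
\begin{enumerate}
\item Tameness must hold near every point of $\mathcal U\times\cM^\infty$, not only near $g_0$. Your compact-range argument works verbatim near any positive-definite $g_1$.
\item You do not need to shrink the fixed domain $\mathcal U$. Since $\Lambda(S,\cdot)$ is smooth on all of $\Hom(P,Q)$ when $S$ is positive definite, you may choose $\rho>\norm[C^0]{D_Pu_1}$ locally near each $u_1$.
\item You may form the product $\partial_S\Lambda(g,p)\cdot\partial_a g$ after evaluation. The $p$-independent factor evaluates to $(\partial_a g)\circ\Phi_u$, which part (i) already covers.
\end{enumerate}
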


\begin{proof}
The proof essentially follows \cite[Part~II, Corollary~2.2.7]{Hamilton1982}, as for each fixed $g$, $\mathcal{H}$ is a nonlinear differential operator of order $2$. To verify joint smooth tameness in $(u,g)$, set
\[
  G_{ij}=g_{\Phi_u}(X_i,X_j),
\]
and write \(G^{ij}\) for the entries of \(G^{-1}\). Jacobi's formula for the derivative of the determinant gives
\[
  (L_g)_{r^a}(z,u,D_Pu)
  =
  \frac{\sqrt{\det G}}{2}
  \sum_{k,l=1}^m
  G^{kl}
  (\partial_a g)_{\Phi_u}(X_k,X_l),
\]
and
\[
  (L_g)_{p_i^a}(z,u,D_Pu)
  =
  \sqrt{\det G}
  \sum_{j=1}^m
  G^{ij}g_{\Phi_u}(n_a,X_j).
\]
Thus the first expression is a smooth function of $u, D_Pu, g\circ\Phi_u$ and $(\partial_a g)\circ\Phi_u$, whereas the second depends smoothly on $u, D_Pu$ and $g\circ\Phi_u$. 

By \cref{prop:composition-evaluation}\textup{(i)}, the product estimate
\eqref{eq:tame-product}, and \cref{lem:finite-substitution},
for every integer \(s\ge0\),
\[
  \norm[H^s]{(L_g)_r(z,u,D_Pu)}
  +
  \norm[H^{s+1}]{(L_g)_p(z,u,D_Pu)}
  \le
  C_s\left(
    1+\norm[H^{s+2}]{u}
     +\norm[H^{s+1}]{g}
  \right).
\]
Here \(C_s\) depends only on fixed low-norm bounds and the uniform
positivity constants of the neighborhood.

Using
\[
  \cH_a(u,g)
  =
  (L_g)_{r^a}(z,u,D_Pu)
  -
  \sum_{i=1}^m
  D_i\!\left(
    (L_g)_{p_i^a}(z,u,D_Pu)
  \right),
\]
we obtain
\[
  \norm[H^s]{\cH(u,g)}
  \le
  C_s\left(
    1+\norm[H^{s+2}]{u}
     +\norm[H^{s+1}]{g}
  \right).
\]
Finally, every Fr\'echet derivative with respect to \((u,g)\) is a finite
sum of compositions and products of the same type. The tame product
estimate places at most one factor in the highest Sobolev norm, while all
remaining factors are controlled in a fixed norm. It then follows from induction that all Fr\'echet
derivatives satisfy the corresponding multilinear tame estimates, and
\(\cH\) is smooth tame.
\end{proof}

Let
$\mathbf H_{u,g}$ be the mean-curvature vector of $\Sigma_y(u)$, with the sign convention
\[
 \left.\frac d{dt}\right|_{0}\operatorname{Area}_g(\Sigma_t)
 =-\int_\Sigma g(\mathbf H,V)\,d\mu_\Sigma
\]
for compactly supported variation. 
\begin{Prop}
\label{prop:H-mean-curvature}
For $z\in P+y$ and every $\zeta\in Q$, we have
\begin{equation}
  \bigl\langle\cH(u,g)(z),\zeta\bigr\rangle_0
  =-\mu_{u,g}(z)\,
  g_{\Phi_u(z)}\bigl(\mathbf H_{u,g}(\Phi_u(z)),\zeta\bigr).
\label{eq:H-mean-curvature-exact}
\end{equation}
Equivalently, if
\[
  B_{u,g}(z):N^g_{\Phi_u(z)}\Sigma_y(u)\longrightarrow Q^*,
  \qquad
  B_{u,g}(H)[\zeta]=g_{\Phi_u(z)}(H,\zeta),
\]
then
\begin{equation}
  \cH(u,g)(z)
  =-\mu_{u,g}(z)B_{u,g}(z)\mathbf H_{u,g}(\Phi_u(z)).
\label{eq:H-BH}
\end{equation}
The map $B_{u,g}(z)$ is an isomorphism at every graph point. Consequently,
\[
  \cH(u,g)=0
\]
if and only if every leaf \(\Sigma_y(u)\) is minimal.
\end{Prop}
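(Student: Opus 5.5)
The identity \eqref{eq:H-mean-curvature-exact} will follow by comparing two expressions for the first variation of area of a single leaf under a compactly supported graph variation. Fix $y\in Q$, a relatively compact domain $\Omega\Subset P+y$, and a compactly supported smooth $Q$-valued function $\phi$ on $P+y$. Extend it to a variation $v$ that is compactly supported near $\Omega$. The local formula in \cref{lem:euler-first-variation} gives
\[
  \left.\frac{d}{dt}\right|_{t=0}\operatorname{Area}_g\bigl(\Phi_{u+tv}(\Omega)\bigr)
  =\int_\Omega\langle\cH(u,g),\phi\rangle_0\,d\mathcal H^m_0 .
\]
Only the values of $u+tv$ on $P+y$ enter the leaf $\Phi_{u+tv}(P+y)$, so this is a statement about one leaf. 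On the other hand, the variation vector field of the family $t\mapsto\Phi_{u+tv}|_{P+y}$ at the point $\Phi_u(z)$ is $V=\phi(z)\in Q$, viewed as a vector in $\R^N$. The geometric first-variation formula, with the paper's sign convention, then gives
\[
  -\int_{\Sigma}g(\mathbf H,V)\,d\mu_\Sigma .
\]

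To compare the two, pull the second expression back by $\Phi_u$. By \eqref{eq:Lg} and \eqref{eq:area-density}, the $g$-area measure of the leaf pulls back to $\mu_{u,g}(z)\,d\mathcal H^m_0$, because $X_i=e_i+D_iu$ are exactly the pushforwards of the $e_i$ by \eqref{eq:tangent}. The second expression therefore equals
\[
  -\int_\Omega\mu_{u,g}(z)\,g_{\Phi_u(z)}\bigl(\mathbf H_{u,g}(\Phi_u(z)),\phi(z)\bigr)\,d\mathcal H^m_0 .
\]
One subtlety is that $V$ need not be normal to the leaf. This causes no problem, since $\mathbf H$ is normal and the tangential part of $V$ contributes only a divergence term, which integrates to zero for compact support. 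So the standard formula holds with $g(\mathbf H,V)$ for an arbitrary $V$. Equating the two integrals for all $\phi$ and using the fundamental lemma of the calculus of variations, together with the smoothness of both integrands, gives \eqref{eq:H-mean-curvature-exact} pointwise for every $\zeta\in Q$. Formula \eqref{eq:H-BH} is then a restatement of \eqref{eq:H-mean-curvature-exact}, with $Q^*$ identified with $Q$ via $\langle\cdot,\cdot\rangle_0$.

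It remains to show that $B_{u,g}(z)$ is an isomorphism. Both $N^g\Sigma$ and $Q^*$ have dimension $q$, so it suffices to prove injectivity. Suppose $H\in N^g$ and $g(H,\zeta)=0$ for all $\zeta\in Q$. Then $H$ is $g$-orthogonal to $Q$ and also, being normal, to $T\Sigma=\operatorname{span}\{X_i\}$. By \eqref{eq:block-DPhi}, the vectors $X_i=e_i+D_iu$ together with $Q$ span $\R^N$: their $P$-components $e_i$ span $P$, and $Q$ supplies the rest. Hence $H=0$.

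Finally, since $\mu_{u,g}>0$, we get $\cH(u,g)(z)=0$ if and only if $\mathbf H_{u,g}(\Phi_u(z))=0$. Because $\Phi_u$ maps $P+y$ onto $\Sigma_y(u)$ for every $y$, the vanishing of $\cH$ everywhere is equivalent to the minimality of every leaf. The main point requiring care is the pullback of the area measure and the handling of the non-normal variation field; everything else is direct.
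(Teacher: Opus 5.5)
Your proposal is correct and follows essentially the same route as the paper. You equate the local first-variation formula of \cref{lem:euler-first-variation} with the geometric first-variation formula via $d\mu_{\Sigma}=\mu_{u,g}\,d\mathcal H^m_0$, and then prove that $B_{u,g}(z)$ is an isomorphism from $T\Sigma\oplus Q=\R^N$ together with a dimension count. Your remarks on the non-normal variation field and on the variation being leafwise rather than periodic spell out details the paper leaves implicit.
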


\begin{proof}
Let $\Omega\Subset P+y$ and let $v$ be compactly supported in $\Omega$.
The variation $u_t=u+tv$ has ambient variation field
$V(\Phi_u(z))=v(z)$. By the local formula in
\cref{lem:euler-first-variation},
\[
  \delta\operatorname{Area}_g
  =\int_\Omega\langle\cH(u,g),v\rangle_0\,d\mathcal H^m_0.
\]
The geometric first-variation formula and
$d\mu_{\Sigma_y(u)}=\mu_{u,g}\,d\mathcal H^m_0$ give
\[
  \delta\operatorname{Area}_g
  =-\int_\Omega\mu_{u,g}\,
  g(\mathbf H_{u,g},v)\,d\mathcal H^m_0.
\]
Since the value of $v(z)\in Q$ is arbitrary, the pointwise identity follows.

To prove that $B_{u,g}(z)$ is an isomorphism, suppose that
$H\in N^g_{\Phi_u(z)}\Sigma_y(u)$ and $B_{u,g}(H)=0$. Then $H$ is
$g$-orthogonal both to $T_{\Phi_u(z)}\Sigma_y(u)$ and to $Q$. Because a
graph satisfies
\[
  T_{\Phi_u(z)}\Sigma_y(u)\oplus Q=\R^N,
\]
we obtain $H=0$. The domain and range both have dimension $q$, so $B_{u,g}$
is an isomorphism.
\end{proof}

We use the Euler-Lagrange map rather than the geometric mean curvature
vector because $\cH(u,g)$ takes values in the fixed space $Q$, whereas the
mean curvature vector is a section of a normal bundle that varies with
$(u,g)$.

\subsection{Transverse relabelling and a fixed codomain}
\label{sec:gauge}
For $c\in Q$ define
\begin{equation}
  (T_cu)(z)=c+u(z+c).
\label{eq:gauge-action}
\end{equation}
Then
\[
  \Phi_{T_cu}(z)=\Phi_u(z+c),
\]
so $u$ and $T_cu$ describe the same unlabelled foliation, with the transverse
parameter shifted by $c$. Introduce the \textit{transverse Jacobian}
\begin{equation}
  J=I_Q+\partial_Q u.
\label{eq:J}
\end{equation}
Its $a$-th column is given by $J_a=n_a+\partial_a u$.  Since $u$ is periodic,
\begin{equation}
  \int_{\T^N}J\,dz=I_Q.
\label{eq:meanJ}
\end{equation}

\begin{Lemma}
\label{lem:noether}
For every $(u,g)\in\mathcal U\times\cM^\infty$,
\begin{equation}
  \int_{\T^N}J^T\cH(u,g)\,dz=0.
\label{eq:noether}
\end{equation}
\end{Lemma}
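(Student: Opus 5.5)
The plan is to derive \eqref{eq:noether} as a Noether-type identity from the invariance of $\mathcal A$ under the transverse relabelling \eqref{eq:gauge-action}. First I will show that $\mathcal A(T_cu,g)=\mathcal A(u,g)$ for every $c\in Q$ with $T_cu\in\mathcal U$. Write $\tau_c(z)=z+c$. Then
\[
  (T_cu)(z)=c+u(z+c),
  \qquad
  D_P(T_cu)(z)=(D_Pu)(z+c),
\]
because $c$ is constant and $D_i$ commutes with translation. Since $c\in Q$ is transverse, $z+T_cu(z)=(z+c)+u(z+c)$. Moreover, $L_g(z,r,p)$ depends on $(z,r)$ only through $z+r$. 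Hence
\[
  L_g\bigl(z,T_cu(z),D_PT_cu(z)\bigr)
  =L_g\bigl(z+c,u(z+c),D_Pu(z+c)\bigr).
\]
Integrating over $\T^N$ and using the translation invariance of $dz$, together with periodicity, gives the invariance of $\mathcal A$.

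Next I differentiate in $c$ along $c=t\,n_a$ at $t=0$. The curve $t\mapsto T_{tn_a}u$ is smooth into $C^\infty(\T^N;Q)$, and its velocity is
\[
  \left.\frac{d}{dt}\right|_{t=0}T_{tn_a}u=n_a+\partial_au=J_a.
\]
By the chain rule and \cref{lem:euler-first-variation},
\[
  0=\left.\frac{d}{dt}\right|_{t=0}\mathcal A(T_{tn_a}u,g)
  =D_u\mathcal A(u,g)[J_a]
  =\int_{\T^N}\langle\cH(u,g),J_a\rangle_0\,dz .
\]
This is exactly the $a$-th component of $\int J^T\cH\,dz$, so \eqref{eq:noether} follows.

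The only delicate point is justifying the differentiation. The set $\mathcal U$ is open, and $T_{tn_a}u\to u$ in $C^\infty$ as $t\to0$, so the curve stays in $\mathcal U$ for small $t$. Moreover, $\mathcal A$ is $C^1$ on $\mathcal U$, being the integral of a smooth function of $(z,u,D_Pu)$, so the chain rule applies.

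As a cross-check, one can instead compute directly. Write $\langle\cH,\partial_au\rangle$ plus the $n_a$-component, and integrate by parts; the result should collapse to $\int\partial_a\bigl(L_g(z,u,D_Pu)\bigr)\,dz=0$. This works because the explicit $z$-dependence of $L_g$ enters only through $z+r$, so $(L_g)_{z^a}$ along $n_a$ equals $(L_g)_{r^a}$. I expect no real obstacle beyond this bookkeeping.
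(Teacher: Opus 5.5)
Your proof is correct and follows the paper's argument. Like the paper, you get invariance of $\mathcal A$ under $T_c$ from the fact that $L_g$ depends on $(z,r)$ only through $z+r$, then differentiate at $c=0$ with velocity $Jc$ and apply \cref{lem:euler-first-variation}; working componentwise along $c=t\,n_a$ is only a cosmetic difference, and your integration-by-parts cross-check is also valid (it is essentially the balance identity \eqref{eq:local-balance} the paper derives later).
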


\begin{proof}
The change of variables $z'=z+c$ and the identity
\[
  L_g(z,T_cu,D_P(T_cu))=L_g(z+c,u(z+c),D_Pu(z+c))
\]
show that $\mathcal A(T_cu,g)=\mathcal A(u,g)$.  Note that
\[\left.\frac{d}{dt}\right|_{t=0}T_{tc}u(z)=J(z)c.\]
Differentiating using the fact that $\mathcal A(T_{tc}u,g)$ is constant along $t$, we have
\[0=\left.\frac{d}{dt}\right|_{t=0}\mathcal A(T_{tc}u,g)=D_u\mathcal{A}(u,g)[Jc]=\int_{\T^N}
  \left\langle
    \mathcal H(u,g),Jc
  \right\rangle_0\,dz \]
 The last term is equal to $\int_{\T^N}
  \left\langle
    J^T\mathcal H(u,g),c
  \right\rangle_0\,dz$. Since $c\in Q$ is arbitrary, the desired identity follows. 
\end{proof}
Thus, for each $u$, the image of the Euler-Lagrange map lies in the
codimension-$q$ subspace
\[
  \mathcal Y_u
  =\left\{f\in C^\infty(\T^N;Q):
    \int_{\T^N}J^Tf\,dz=0\right\}.
\]
Equivalently, $\mathcal Y_u$ is the $L^2$-orthogonal complement of the
infinitesimal relabelling directions $\{Jc:c\in Q\}$. Since $J$ depends
on $u$, this is a moving codomain. We replace it by an equivalent equation
with values in the fixed space $Y$.

Recall that
\[
  \Pi_0:C^\infty(\T^N;Q)\longrightarrow
  Y=C^\infty_0(\T^N;Q),
  \qquad
  \Pi_0f=f-\int_{\T^N}f\,dz.
\]
Define the \emph{fixed-range} Euler-Lagrange map
\[
  \cP:\mathcal U\times\cM^\infty\longrightarrow Y
\]
by
\begin{equation}
  \cP(u,g)=\Pi_0\cH(u,g).
\label{eq:Pfixed}
\end{equation}

\begin{Lemma}
\label{lem:fixed-target}
For every $(u,g)\in\mathcal U\times\cM^\infty$,
\begin{equation}
  \cH(u,g)
  =\cP(u,g)
  -\int_{\T^N}J^T\cP(u,g)\,dz.
\label{eq:HfromP}
\end{equation}
Consequently,
\[
  \cP(u,g)=0\quad\Longleftrightarrow\quad\cH(u,g)=0.
\]
\end{Lemma}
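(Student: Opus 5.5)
Write $\bar h:=\int_{\T^N}\cH(u,g)\,dz\in Q$, so that $\cH(u,g)=\cP(u,g)+\bar h$ by the definition of $\Pi_0$. The aim is to express the constant $\bar h$ in terms of $\cP$ alone, using the Noether identity of \cref{lem:noether}. Substituting the decomposition into \eqref{eq:noether} gives
\[
  0=\int_{\T^N}J^T\cP(u,g)\,dz+\Bigl(\int_{\T^N}J^T\,dz\Bigr)\bar h .
\]
Since $\bar h$ is constant, it factors out of the second integral. By \eqref{eq:meanJ}, $\int_{\T^N}J\,dz=I_Q$, and transposing gives $\int_{\T^N}J^T\,dz=I_Q$. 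Hence
\[
  \bar h=-\int_{\T^N}J^T\cP(u,g)\,dz,
\]
which is exactly \eqref{eq:HfromP}.

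The equivalence then follows directly. If $\cH(u,g)=0$, then $\cP(u,g)=\Pi_0\cH(u,g)=0$. Conversely, if $\cP(u,g)=0$, the right-hand side of \eqref{eq:HfromP} vanishes, so $\cH(u,g)=0$.

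No step is a real obstacle. The only point to check is that \eqref{eq:meanJ} holds for periodic $u$: each entry of $\partial_Qu$ is a derivative of a periodic function and so has zero mean. This is what makes the mean of $J^T$ the identity, so no matrix inversion or smallness assumption is needed. One should also note that $J$ is a $q\times q$ matrix acting on $Q$-valued fields in the fixed orthonormal frame $n_1,\dots,n_q$, so that $J^T$ and the constant vector $\bar h$ are compatible.
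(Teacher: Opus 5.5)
Your proof is correct and follows the paper's own argument: write $\cH(u,g)=\cP(u,g)+c$ with $c\in Q$ constant, insert this into the Noether identity \eqref{eq:noether}, and use $\int_{\T^N}J^T\,dz=I_Q$ from \eqref{eq:meanJ} to solve for $c$. You also spell out the equivalence $\cP(u,g)=0\Leftrightarrow\cH(u,g)=0$, which the paper leaves implicit.
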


\begin{proof}
Write $\cH(u,g)=\cP(u,g)+c$ with $c\in Q$ constant.  Insert this into
\eqref{eq:noether} and use \eqref{eq:meanJ}:
\[
  0=\int J^T\cP(u,g)\,dz+c.
\]
Solving for $c$ yields \eqref{eq:HfromP}.
\end{proof}

\section{Linearization and decomposition of the skew current}
\label{sec:linearization}
In this section, we analyze the linearization of $\cH$ and reduce
$D_u\cP$ to a block-symmetric leafwise divergence-form system. At minimal leaves, this linearization can be considered as the Jacobi operator of the leaves. This
will be used in \cref{sec:approx} to construct the approximate right inverse
required by Hamilton's theorem. All derivatives of $L=L_g$ below are
evaluated at $(z,u(z),D_Pu(z))$. Define $q\times q$ matrices
\begin{equation}
  A^{ij}_{ab}=L_{p_i^a p_j^b},
  \qquad
  B^i_{ab}=L_{p_i^a r^b},
  \qquad
  R_{ab}=L_{r^a r^b}.
\label{eq:ABR}
\end{equation}
The equality of mixed derivatives gives
\begin{equation}
  (A^{ij})^T=A^{ji},
  \qquad R^T=R.
\label{eq:ABRsym}
\end{equation}
The linearization of the Euler-Lagrange map is given by the following.
\begin{Lemma}
\label{lem:linearized-euler}
For a $Q$-valued variation $v\in C^\infty(\mathbb T^N; Q)$ we have
\begin{equation}
  D_u\cH(u,g)[v]
  =-\sum_{i=1}^m D_i\!\left(
      \sum_{j=1}^m A^{ij}D_jv+B^iv
    \right)
   +\sum_{j=1}^m(B^j)^TD_jv+Rv.
\label{eq:linearized-H}
\end{equation}
The operator is formally self-adjoint on the torus.
\end{Lemma}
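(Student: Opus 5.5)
The plan is to differentiate the componentwise formula \eqref{eq:H-definition} directly along the line $u+tv$. All derivatives of $L=L_g$ are taken at $(z,u(z),D_Pu(z))$, and $z$ enters only as a parameter.

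First, by the chain rule,
\[
  \left.\frac{d}{dt}\right|_{t=0}(L_g)_{r^a}(z,u+tv,D_Pu+tD_Pv)
  =\sum_{b}L_{r^ar^b}v^b+\sum_{j,b}L_{r^ap_j^b}D_jv^b .
\]
By \eqref{eq:ABR} and equality of mixed partials, $L_{r^ap_j^b}=L_{p_j^br^a}=B^j_{ba}$. The expression above is therefore $(Rv)_a+\sum_j((B^j)^TD_jv)_a$.

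Second, the same computation gives
\[
  \left.\frac{d}{dt}\right|_{t=0}(L_g)_{p_i^a}
  =\sum_{j,b}A^{ij}_{ab}D_jv^b+\sum_bB^i_{ab}v^b .
\]
The operator $D_i$ is linear with fixed coefficients and commutes with $d/dt$. Applying $-\sum_iD_i$ therefore gives the divergence term in \eqref{eq:linearized-H}. Differentiation under the derivative is legitimate because $\cH$ is smooth tame by \cref{lem:H-smooth-tame}, or simply because everything is smooth and pointwise.

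For formal self-adjointness, I would use the second variation. By \cref{lem:euler-first-variation},
\[
  D^2_u\mathcal A(u,g)[v,w]=\int_{\T^N}\langle D_u\cH(u,g)[v],w\rangle_0\,dz .
\]
This is symmetric in $v,w$ because $\mathcal A$ is smooth and Fréchet second derivatives are symmetric, and the symmetry is exactly formal self-adjointness. One can also check it directly. Periodic integration by parts along $D_i$ (the $e_i$ are constant vector fields, so $\int D_if\,dz=0$) turns $\langle D_u\cH[v],w\rangle$ into the bilinear form
\[
  \int\Bigl(\sum_{i,j}\langle A^{ij}D_jv,D_iw\rangle+\sum_i\langle B^iv,D_iw\rangle+\sum_j\langle (B^j)^TD_jv,w\rangle+\langle Rv,w\rangle\Bigr)dz .
\]
This form is symmetric under $v\leftrightarrow w$ by \eqref{eq:ABRsym}. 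The relation $(A^{ij})^T=A^{ji}$ handles the first term, the two $B$ terms swap into each other, and $R^T=R$ handles the last.

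No step is a real obstacle. The only point requiring care is the index bookkeeping in identifying $L_{r^ap_j^b}$ with the transpose of $B^j$ rather than $B^j$ itself.
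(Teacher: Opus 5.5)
Your proposal is correct and follows the paper's proof: you differentiate both terms of \eqref{eq:H-definition} pointwise, identify $L_{r^ap_j^b}=B^j_{ba}$ and $L_{p_i^ap_j^b}=A^{ij}_{ab}$, and obtain formal self-adjointness from \eqref{eq:ABRsym} and periodic integration by parts. Your alternative argument via the symmetry of the second variation $D_u^2\mathcal A(u,g)$ is also valid, but the paper does not need it.
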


\begin{proof}
Differentiate \eqref{eq:H-definition}.  The derivative of the first term is
\[
  L_{r r}v+\sum_{j=1}^m L_{r p_j}D_jv
  =Rv+\sum_{j=1}^m(B^j)^TD_jv,
\]
while the derivative of the second term is
\[
  \sum_{i=1}^m D_i\!\left(
    L_{p_i r}v+\sum_{j=1}^m L_{p_i p_j}D_jv
  \right)
  =\sum_{i=1}^m D_i\!\left(
    B^iv+\sum_{j=1}^m A^{ij}D_jv
  \right).
\]
This proves \eqref{eq:linearized-H}.  Formal self-adjointness follows from \eqref{eq:ABRsym} and periodic integration by parts.
\end{proof}

\begin{Lemma}
\label{lem:H-equivariance}
For every $(u,g)$ in the graph-metric neighborhood and every $c\in Q$ for which $T_cu$ remains in the graph neighborhood,
\begin{equation}
  \cH(T_cu,g)(z)=\cH(u,g)(z+c).
\label{eq:Hequivariance}
\end{equation}
Consequently,
\begin{equation}
  D_u\cH(u,g)[J_a]=\partial_a\cH(u,g),
  \qquad a=1,\ldots,q.
\label{eq:Jacobi-columns}
\end{equation}
\end{Lemma}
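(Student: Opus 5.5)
We first prove the equivariance identity \eqref{eq:Hequivariance} by a direct pointwise computation from the definition \eqref{eq:H-definition}. Set $\tilde u=T_cu$, so $\tilde u(z)=c+u(z+c)$ and $D_P\tilde u(z)=(D_Pu)(z+c)$. Since $L_g(z,r,p)$ depends on $(z,r)$ only through $z+r$ (by \eqref{eq:Lg}, $g$ is evaluated at $z+r$), we have
\[
  L_g\bigl(z,\tilde u(z),p\bigr)=L_g\bigl(z+c,u(z+c),p\bigr)
\]
for every $p$, and the same holds for every partial derivative of $L_g$ in $r$ and $p$. Hence $(L_g)_{r^a}(z,\tilde u,D_P\tilde u)=(L_g)_{r^a}(z+c,u(z+c),D_Pu(z+c))$ and likewise for $(L_g)_{p_i^a}$. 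Thus both coefficient fields of $T_cu$ are the translates by $c$ of those of $u$. Because the constant-coefficient operators $D_i$ commute with translation, the divergence term also translates, and \eqref{eq:Hequivariance} follows.

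For \eqref{eq:Jacobi-columns}, take $c=tn_a$ and differentiate \eqref{eq:Hequivariance} at $t=0$. Since $\cH$ is smooth tame by \cref{lem:H-smooth-tame}, $\mathcal U$ is open, and $t\mapsto T_{tn_a}u$ is a smooth curve in $C^\infty(\T^N;Q)$ with $T_0u=u$, the chain rule applies. As computed in the proof of \cref{lem:noether}, the velocity of this curve is
\[
  \left.\frac{d}{dt}\right|_{t=0}T_{tn_a}u=n_a+\partial_au=J_a .
\]
The left side therefore differentiates to $D_u\cH(u,g)[J_a]$, and the right side $\cH(u,g)(z+tn_a)$ differentiates to $\partial_a\cH(u,g)(z)$.

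The only point needing care is the smoothness of the curve $t\mapsto T_{tn_a}u$ in the Fréchet topology, which justifies the chain rule. Translation acts smoothly on $C^\infty(\T^N)$: the difference quotients of $u(\cdot+tn_a)$ converge to $\partial_au(\cdot+tn_a)$ in every $H^s$ norm, by the Fourier representation or by Taylor's formula with remainder. For small $|t|$ the curve also stays inside $\mathcal U$, since $\mathcal U$ is $C^1$-open and translation preserves all $C^k$ norms; the added constant $tn_a$ is small, and the hypothesis that $T_cu$ stays in the graph neighborhood covers it.
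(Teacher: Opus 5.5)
Your proof is correct and follows the same route as the paper. You use the translation covariance of the coefficients $(L_g)_r$ and $(L_g)_p$, which holds because $L_g$ depends on $(z,r)$ only through $z+r$, and the fact that $D_i$ commutes with translations; you then differentiate along $c=tn_a$, whose velocity is $J_a$. Your added justification of the chain rule is fine. Keeping the curve inside $\mathcal U$ needs only its continuity in the Fr\'echet topology and the openness of $\mathcal U$ there, not the $C^1$-openness you assert.
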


\begin{proof}
Since
\[
  z+T_cu(z)=z+c+u(z+c),
  \qquad
  D_PT_cu(z)=D_Pu(z+c),
\]
the defining formula \eqref{eq:H-definition} gives
$\cH(T_cu,g)(z)=\cH(u,g)(z+c)$. Differentiating at $c=0$ in the
direction $n_a$ yields \eqref{eq:Jacobi-columns}.
\end{proof}

\subsection{The matrix ground-state substitution}
\label{sec:ground-state-motivation}
To construct an approximate right inverse for $D_u\cP$, we first apply a
matrix ground-state transform to the linearization of $\cH$. The reason is that, at an exact
solution, \eqref{eq:Jacobi-columns} gives
\[
  D_u\cH(u,g)[J_a]=0,
\]
so the columns of $J$ are Jacobi fields generated by transverse relabelling.
In codimension one, this is Moser's positive Jacobi field
\cite{Moser1988}. Therefore, we set
\begin{equation}
  v=Jw.
\label{eq:ground-state-substitution}
\end{equation}
For $u$ sufficiently small, $J$ is pointwise invertible. At an exact solution, this change of variable sends the Jacobi fields $\{Jc: c\in Q\}$ to constant $Q$-valued field.

\begin{Lemma}
\label{lem:ground-state}
For every smooth $Q$-valued function $w$,
\begin{equation}
\begin{aligned}
  J^T\Bigl(
  D_u\cH(u,g)[Jw]
  -\sum_{a=1}^q(\partial_a\cH(u,g))w^a
  \Bigr)
  ={}&-\sum_{i=1}^m D_i\!\left(
      \sum_{j=1}^m M^{ij}D_jw
    \right)\\
   &-\sum_{j=1}^m K^jD_jw,
\end{aligned}
\label{eq:ground-state}
\end{equation}
where
\begin{equation}
  M^{ij}=J^TA^{ij}J
\label{eq:M}
\end{equation}
and $K=(K^1, K^2, \ldots, K^m)$ is the matrix field consisting of $q\times q$ matrices of the form:
\begin{equation}
  K^j
  =\sum_{i=1}^m\left(
      J^TA^{ji}D_iJ-(D_iJ)^TA^{ij}J
    \right)
   +J^T\bigl(B^j-(B^j)^T\bigr)J.
\label{eq:K}
\end{equation}
Moreover,
\begin{equation}
  (M^{ij})^T=M^{ji},
  \qquad
  (K^j)^T=-K^j.
\label{eq:MKsym}
\end{equation}
\end{Lemma}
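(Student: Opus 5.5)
We expand $D_u\cH(u,g)[Jw]$ with the product rule, using the linearization formula of \cref{lem:linearized-euler} with $v=Jw$. We then use \eqref{eq:Jacobi-columns} to cancel every term in which $w$ appears undifferentiated. Writing $D_j(Jw)=(D_jJ)w+JD_jw$, we obtain
\[
  D_u\cH[Jw]
  =\sum_{a=1}^q w^a\,D_u\cH[J_a]
   -\sum_{i} D_i\Bigl(\sum_j A^{ij}J D_jw\Bigr)
   -\sum_i \sum_j A^{ij}(D_iJ)\,D_jw\ \text{(suitably indexed)}
   +\sum_j (B^j)^TJD_jw - \sum_i B^iJD_iw .
\]
More precisely, the operator $\mathcal D v=-\sum_i D_i(\sum_j A^{ij}D_jv+B^iv)+\sum_j(B^j)^TD_jv+Rv$ applied to $Jw$ equals $\sum_a w^a\mathcal D J_a$ plus the cross terms in which at least one derivative falls on $w$. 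Those cross terms are
\[
  -\sum_{i,j}D_i\bigl(A^{ij}JD_jw\bigr)-\sum_{i,j}A^{ij}(D_jJ)D_iw-\sum_i B^iJD_iw+\sum_j(B^j)^TJD_jw .
\]
The second of these comes from $-D_i(A^{ij}(D_jJ)w)$, where the derivative hits $w$. Since $\mathcal D J_a=\partial_a\cH$, subtracting $\sum_a(\partial_a\cH)w^a$ leaves exactly the cross terms.

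Next we multiply by $J^T$ and move $J^T$ inside the divergence:
\[
  -J^TD_i(A^{ij}JD_jw)=-D_i(J^TA^{ij}JD_jw)+(D_iJ)^TA^{ij}JD_jw .
\]
This produces the principal part $-\sum_{i,j}D_i(M^{ij}D_jw)$ with $M^{ij}=J^TA^{ij}J$. The first-order remainder is
\[
  \sum_{i,j}(D_iJ)^TA^{ij}J\,D_jw-\sum_{i,j}J^TA^{ij}(D_jJ)D_iw+\sum_jJ^T\bigl((B^j)^T-B^j\bigr)JD_jw .
\]
In the middle sum we relabel $i\leftrightarrow j$, giving $\sum_{i,j}J^TA^{ji}(D_iJ)D_jw$. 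The coefficient of $D_jw$ is then $-K^j$ with $K^j$ as in \eqref{eq:K}, which establishes \eqref{eq:ground-state}. The only delicate point is this bookkeeping: keeping track of which index of $A^{ij}$ is paired with which derivative, and getting the sign of the $B$-term right.

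Finally, the symmetries. From $(A^{ij})^T=A^{ji}$ in \eqref{eq:ABRsym} we get $(M^{ij})^T=J^T(A^{ij})^TJ=M^{ji}$. For $K^j$, the transpose of $J^TA^{ji}D_iJ$ is $(D_iJ)^T(A^{ji})^TJ=(D_iJ)^TA^{ij}J$. Hence the summand $J^TA^{ji}D_iJ-(D_iJ)^TA^{ij}J$ is of the form $X-X^T$, and so is skew. The term $J^T(B^j-(B^j)^T)J$ is a congruence of a skew matrix, hence also skew. Therefore $(K^j)^T=-K^j$.
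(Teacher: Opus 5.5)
Your proposal is correct and follows the paper's proof: you substitute $v=Jw$ into the linearization, cancel the undifferentiated-$w$ terms via \eqref{eq:Jacobi-columns}, commute $J^T$ through the divergence, relabel indices to read off $-K^j$, and check both symmetries from $(A^{ij})^T=A^{ji}$. One slip: your first heuristic display pairs the indices as $A^{ij}(D_iJ)D_jw$, which is wrong because $A^{ij}\neq A^{ji}$ in general, but the ``more precisely'' version $-\sum_{i,j}A^{ij}(D_jJ)D_iw$ that you actually carry forward is correct.
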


\begin{proof}
Write $J_b$ for the $b$-th column of $J$.  For $v=Jw$, we have
\begin{equation}
 D_jv=(D_jJ)w+JD_jw.
\label{eq:D-Jw}
\end{equation}
Substitution into \eqref{eq:linearized-H} gives
\begin{align*}
 D_u\cH(u,g)[Jw]
={}&-\sum_{i=1}^m D_i\!\left(
      \sum_{j=1}^m A^{ij}(D_jJ)w+B^iJw
    \right)\\
 &+\sum_{j=1}^m(B^j)^T(D_jJ)w+RJw\\
 &-\sum_{i=1}^m D_i\!\left(
      \sum_{j=1}^m A^{ij}JD_jw
    \right)
 +\sum_{j=1}^m(B^j)^TJD_jw.
\end{align*}
After expanding the first divergence, the terms with no derivative
on $w$ are exactly
$\sum_{a=1}^q D_u\cH(u,g)[J_a]w^a$.  By
\eqref{eq:Jacobi-columns} this equals
$\sum_{a=1}^q(\partial_a\cH(u,g))w^a$.  After subtracting these terms, the remaining
expression is
\begin{equation}
 -\sum_{i=1}^m D_i\!\left(
    \sum_{j=1}^m A^{ij}JD_jw
  \right)
 -\sum_{i,j=1}^m A^{ji}(D_iJ)D_jw
 -\sum_{j=1}^m B^jJD_jw
 +\sum_{j=1}^m(B^j)^TJD_jw.
\label{eq:remaining-ground-rigorous}
\end{equation}
Multiplying by $J^T$ and applying the product rule to the first term yields
\[
\begin{aligned}
 -\sum_{i=1}^m J^TD_i\!\left(
    \sum_{j=1}^m A^{ij}JD_jw
  \right)
 ={}&-\sum_{i=1}^m D_i\!\left(
    \sum_{j=1}^m J^TA^{ij}JD_jw
  \right)\\
 &+\sum_{i,j=1}^m(D_iJ)^TA^{ij}JD_jw.
\end{aligned}
\]
Thus the coefficient of $D_jw$ outside the divergence is
\[
 \sum_{i=1}^m\left(
   (D_iJ)^TA^{ij}J-J^TA^{ji}D_iJ
 \right)
 -J^TB^jJ+J^T(B^j)^TJ=-K^j,
\]
which proves \eqref{eq:ground-state} and \eqref{eq:K}.
Finally,
\[
 (M^{ij})^T=J^T(A^{ij})^TJ=J^TA^{ji}J=M^{ji}.
\]
Transposing \eqref{eq:K} interchanges the first two terms with a minus sign,
while $B^j-(B^j)^T$ changes sign, hence $(K^j)^T=-K^j$.
\end{proof}

\begin{Rem}
We call $K=(K^1,\ldots,K^m)$ the \emph{skew current}. For $q=1$, skew-symmetry forces $K^j=0$, recovering Moser's scalar
cancellation. In higher codimension, the current need not vanish.
\end{Rem}

\subsection{Normal-curl and divergence identities}
\label{sec:wronskian}
We now derive the structural identities satisfied by the skew current $K$. For every transverse direction $n_a$, define the quantity:
\begin{equation}
  \rho_{i,a}
  =L_{p_i}(z,u,D_Pu)\cdot J_a.
\label{eq:rho}
\end{equation}
The next lemma contains all of the
structural identities for $K$ that will be used later.

\begin{Lemma}
\label{lem:wronskian-div}
For every $i,a,b$, we have
\begin{equation}
  (K^i)_{ab}
  =\partial_b\rho_{i,a}-\partial_a\rho_{i,b}.
\label{eq:normal-curl}
\end{equation}
Consequently,
\begin{equation}
  \int_{\T^N}K^i\,dz=0
  \qquad\text{for every }i,
\label{eq:Kzero}
\end{equation}
and the leafwise divergence satisfies
\begin{equation}
  \sum_{i=1}^m D_i(K^i)_{ab}
  =-J_a\cdot\partial_b\cH(u,g)
   +J_b\cdot\partial_a\cH(u,g).
\label{eq:Kdiv}
\end{equation}
In particular, at every exact solution of the Euler-Lagrange equation,
\begin{equation}
  \sum_{i=1}^m D_iK^i=0.
\label{eq:K-div-free-solution}
\end{equation}
\end{Lemma}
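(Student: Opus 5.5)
The plan is to prove \eqref{eq:normal-curl} by differentiating $\rho_{i,a}$ directly and matching terms with \eqref{eq:K}; the remaining three claims then follow quickly. The key structural fact is that $L_g(z,r,p)$ depends on $z$ and $r$ only through $z+r$, because $g$ is evaluated at $z+r$. So for any transverse direction $n_b$, the derivative of $z\mapsto F(z,u(z),D_Pu(z))$ (with $F$ any derivative of $L$) is
\[
  \partial_b\bigl[F(z,u,D_Pu)\bigr]=\sum_c F_{r^c}\,(J_b)^c+\sum_j F_{p_j}\cdot D_j\partial_b u .
\]
Here $D_j\partial_bu=D_jJ_b$, since $J_b=n_b+\partial_bu$ and $n_b$ is constant.

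\textbf{Step 1 (normal curl).} Applying this with $F=L_{p_i}$ gives
\[
  \partial_b\bigl(L_{p_i}\bigr)=B^iJ_b+\sum_j A^{ij}D_jJ_b .
\]
Hence
\[
  \partial_b\rho_{i,a}=J_a^TB^iJ_b+\sum_jJ_a^TA^{ij}D_jJ_b+L_{p_i}\cdot\partial_b\partial_a u .
\]
In $\partial_b\rho_{i,a}-\partial_a\rho_{i,b}$ the Hessian terms cancel by symmetry of $\partial_a\partial_bu$. What remains is
\[
  J_a^T(B^i-(B^i)^T)J_b+\sum_j\bigl(J_a^TA^{ij}D_jJ_b-J_b^TA^{ij}D_jJ_a\bigr).
\]
I would compare this with the $(a,b)$ entry of \eqref{eq:K}, with the index roles renamed. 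Using $(A^{ji})^T=A^{ij}$ from \eqref{eq:ABRsym}, the entry $\bigl((D_jJ)^TA^{ji}J\bigr)_{ab}$ equals $J_b^TA^{ij}D_jJ_a$, so the two expressions agree. Keeping the transposes and index conventions straight here is the only delicate point, and I expect this bookkeeping to be the main (if modest) obstacle.

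\textbf{Step 2 (zero mean).} Identity \eqref{eq:Kzero} is immediate from Step 1. Each entry of $K^i$ is a sum of $\partial$-derivatives of smooth periodic functions, and such derivatives integrate to zero over $\T^N$.

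\textbf{Step 3 (leafwise divergence).} Since $D_i$ and $\partial_b$ commute, I would first compute $\sum_iD_i\rho_{i,a}$. By \eqref{eq:H-definition},
\[
  \sum_iD_i\rho_{i,a}=\sum_i D_i(L_{p_i})\cdot J_a+\sum_iL_{p_i}\cdot D_i\partial_au=(L_r-\cH)\cdot J_a+\sum_iL_{p_i}\cdot D_i\partial_au .
\]
The chain rule in the first paragraph, applied to $F=L$, shows
\[
  \partial_a\bigl[L(z,u,D_Pu)\bigr]=L_r\cdot J_a+\sum_iL_{p_i}\cdot D_i\partial_au .
\]
Therefore
\[
  \sum_iD_i\rho_{i,a}=\partial_a\bigl[L(z,u,D_Pu)\bigr]-\cH\cdot J_a .
\]
Taking the normal curl $\partial_b(\cdot)_a-\partial_a(\cdot)_b$ makes the terms $\partial_b\partial_aL$ cancel. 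The terms $\cH\cdot\partial_bJ_a$ and $\cH\cdot\partial_aJ_b$ also cancel, since both equal $\cH\cdot\partial_a\partial_bu$. What is left is exactly \eqref{eq:Kdiv}.

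\textbf{Step 4 (exact solutions).} If $\cH(u,g)=0$ identically, then its $\partial_a$-derivatives vanish. So the right side of \eqref{eq:Kdiv} is zero, which gives \eqref{eq:K-div-free-solution}.
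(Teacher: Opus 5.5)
Your proposal is correct and follows essentially the same route as the paper. Like the paper, you use the $z+r$ dependence of $L$ to get $\partial_b L_{p_i}=B^iJ_b+\sum_j A^{ij}D_jJ_b$ and match the resulting curl with \eqref{eq:K} via $(A^{ji})^T=A^{ij}$; you then take the normal curl of the local balance identity $\sum_i D_i\rho_{i,a}=\partial_a[L]-\cH\cdot J_a$ to obtain \eqref{eq:Kdiv}, and the only difference is the immaterial order in which the two identities are derived.
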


\begin{proof}
We first differentiate the density
$L_g(z,u(z),D_Pu(z))$ in the fixed transverse direction $n_a$.  By \eqref{eq:Lg}, the variables $z$ and $r$ enter only through $z+r$.
Therefore, $L_{z_a}=L_{r^a}$ in every transverse direction.  Since
$J_a=n_a+\partial_a u$ and
$\partial_aD_iu=D_i\partial_a u=D_iJ_a$, the chain rule gives
\[
  \partial_a\!\left[L_g(z,u,D_Pu)\right]
  =L_r\cdot J_a
   +\sum_{i=1}^m L_{p_i}\cdot D_iJ_a.
\]
Using
\[
  \cH(u,g)=L_r-\sum_{i=1}^m D_iL_{p_i}
\]
and the definition \eqref{eq:rho}, the product rule yields
\begin{equation}
  \sum_{i=1}^m D_i\rho_{i,a}
  =\partial_a\!\left[L_g(z,u,D_Pu)\right]
   -\cH(u,g)\cdot J_a.
\label{eq:local-balance}
\end{equation}

We next identify the skew coefficient in the matrix ground-state transform.
The chain rule and the fact that the $(z,r)$-dependence of $L$ is only
through $z+r$ give
\begin{equation}
  \partial_b\!\left[L_{p_i}(z,u,D_Pu)\right]
  =\sum_{j=1}^m A^{ij}D_jJ_b+B^iJ_b.
\label{eq:derivative-momentum}
\end{equation}
Interchanging $b$ and $a$ then subtracting gives
\begin{align*}
 \partial_b\rho_{i,a}-\partial_a\rho_{i,b}
={}&J_a^T\partial_bL_{p_i}-J_b^T\partial_aL_{p_i}
 +L_{p_i}\cdot(\partial_bJ_a-\partial_aJ_b).
\end{align*}
Here and below all transverse derivatives are evaluated at
$(z,u,D_Pu)$, we drop the base point for computational convenience.  The last term vanishes because the transverse derivatives
commute.  Substituting \eqref{eq:derivative-momentum} gives
\[
 \partial_b\rho_{i,a}-\partial_a\rho_{i,b}=\sum_{j=1}^m\left(
   J_a^TA^{ij}D_jJ_b-J_b^TA^{ij}D_jJ_a
 \right)
 +J_a^TB^iJ_b-J_b^TB^iJ_a.
\]
Using $(A^{ij})^T=A^{ji}$, this is exactly the $(a,b)$ entry of $K^i$ in
\eqref{eq:K}, proving \eqref{eq:normal-curl}.  Since every $\rho_{i,a}$ is
periodic, the transverse derivatives in \eqref{eq:normal-curl} have zero torus
average, and \eqref{eq:Kzero} follows.

Finally, take the leafwise divergence of \eqref{eq:normal-curl}.  Since all
$D_i$ and $\partial_a$ are constant commuting vector fields, equation
\eqref{eq:local-balance} yields
\begin{align*}
 \sum_{i=1}^mD_i(K^i)_{ab}
 &=\partial_b\partial_aL-
   \partial_b\bigl(\cH(u,g)\cdot J_a\bigr)
   -\partial_a\partial_bL+
   \partial_a\bigl(\cH(u,g)\cdot J_b\bigr)\\
 &=-J_a\cdot\partial_b\cH(u,g)
   +J_b\cdot\partial_a\cH(u,g),
\end{align*}
where the mixed transverse derivatives of the composed density and of $u$ cancel. This proves
\eqref{eq:Kdiv}.  If $\cH(u,g)=0$, then all of its transverse derivatives
vanish, giving \eqref{eq:K-div-free-solution}.
\end{proof}

\subsection{Decomposition of the skew current}
\label{sec:hodge-reduction}
\Cref{lem:wronskian-div} determines only the leafwise divergence of $K$, but not the whole term.  This distinction is essential for Hamilton's theorem: a part
of $K$ may survive when the residual vanishes and therefore cannot be placed
in the quadratic error.  We now decompose $K$ into a divergence-free part, which
belongs to the exact transformed operator, and a remaining part, which is
linear in the residual. 

Define the positive leafwise Laplacian by
\begin{equation}
  \Delta_P=-\sum_{i=1}^m D_i^2.
\label{eq:leaf-laplacian}
\end{equation}

\begin{Lemma}
\label{lem:leafwise-laplacian}
Assume that $P$ satisfies the Diophantine condition \eqref{eq:dioph}.  Then
\begin{equation}
 \Delta_P:C^\infty_0(\T^N;Q)
 \longrightarrow C^\infty_0(\T^N;Q)
\label{eq:leaf-laplacian-map}
\end{equation}
is a bijection.  Its inverse is the Fourier multiplier
\begin{equation}
  \widehat{\Delta_P^{-1}f}(\xi)
  =\frac{\widehat f(\xi)}{|2\pi\xi_P|^2},
  \qquad \xi\ne0,
\label{eq:leaf-inverse}
\end{equation}
with zero Fourier coefficient at $\xi=0$, and for every $s\in\R$ it satisfies
\begin{equation}
  \norm[H^s]{\Delta_P^{-1}f}
  \le C\gamma^{-2}\norm[H^{s+2\tau}]{f}.
\label{eq:leaf-inverse-estimate}
\end{equation}
In particular, $\Delta_P^{-1}$ is a smooth tame linear map with fixed loss
$2\tau$.
\end{Lemma}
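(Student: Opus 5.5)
The whole argument is a Fourier-side computation, since $\Delta_P=-\sum_i D_i^2$ is a constant-coefficient operator on the torus. I would first compute its symbol. Because $D_i=\partial_{e_i}$ and $D_i e_\xi = 2\pi i\,(\xi\cdot e_i)\,e_\xi = 2\pi i\,\xi_i\,e_\xi$, we get
\[
  \Delta_P e_\xi=\sum_{i=1}^m (2\pi\xi_i)^2 e_\xi = |2\pi\xi_P|^2 e_\xi ,
\]
and $\Delta_P$ acts componentwise on $Q$-valued functions. Hence $\widehat{\Delta_P f}(\xi)=|2\pi\xi_P|^2\widehat f(\xi)$. Two consequences follow at once: $\Delta_P$ kills constants, and $\Delta_P$ maps $C^\infty_0$ into itself because $\widehat{\Delta_P f}(0)=0$.

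Next I would treat injectivity and the candidate inverse. If $f\in C^\infty_0$ and $\Delta_P f=0$, then $|2\pi\xi_P|^2\widehat f(\xi)=0$ for every $\xi$. For $\xi\neq0$ the Diophantine bound \eqref{eq:dioph} gives $|2\pi\xi_P|\ge\gamma\langle\xi\rangle^{-\tau}>0$, so $\widehat f(\xi)=0$. Since also $\widehat f(0)=0$, we conclude $f=0$.

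For surjectivity, given $f\in C^\infty_0$ I would define $h$ by the multiplier \eqref{eq:leaf-inverse}, with $\widehat h(0)=0$. The only point to check is that $h$ is smooth, and this comes from the estimate. By \eqref{eq:dioph}, for $\xi\neq0$,
\[
  \langle\xi\rangle^{2s}|\widehat h(\xi)|^2
  \le \gamma^{-4}\langle\xi\rangle^{2s+4\tau}|\widehat f(\xi)|^2 .
\]
Summing over $\xi$ gives \eqref{eq:leaf-inverse-estimate} with $C=1$, valid for every real $s$. Since $f\in H^{s+2\tau}$ for all $s$, $h$ lies in every $H^s$, so $h\in C^\infty_0$ by Sobolev embedding \eqref{eq:sobolev-embedding}. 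Finally $\Delta_P h=f$ on every Fourier mode, using $\widehat f(0)=0$ at $\xi=0$. So $\Delta_P^{-1}$ as defined is the two-sided inverse.

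Tameness is then immediate. $\Delta_P^{-1}$ is linear and satisfies $\norm[n]{\Delta_P^{-1}f}\le C_n\norm[n+2\tau]{f}$ for all $n\ge0$, which is a tame estimate of degree $r=2\tau$ and base $b=0$; here we use $\tau\in\mathbb N$, as arranged in \cref{def:diophantine}, so that the degree is an integer. A continuous linear tame map is smooth tame: its first derivative is itself, and all higher derivatives vanish. There is no genuine obstacle in this lemma. The only step needing care is the use of the Diophantine condition to rule out vanishing of the symbol off $\xi=0$, together with the bookkeeping of the polynomial loss $2\tau$.
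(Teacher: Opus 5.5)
Your proposal is correct and follows the paper's proof: you compute the symbol $|2\pi\xi_P|^2$, use the Diophantine bound to make it nonzero for $\xi\neq0$, and sum the weighted Fourier coefficients to get the estimate with $C=1$. That estimate gives smoothness, surjectivity and tameness, and injectivity follows because every Fourier coefficient of a zero-average kernel element vanishes.
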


\begin{proof}
On the Fourier mode $e_\xi$,
\[
  \Delta_Pe_\xi=|2\pi\xi_P|^2e_\xi.
\]
The Diophantine condition makes this multiplier nonzero for every
$\xi\ne0$. Hence, on zero-average fields, the inverse is given by
\eqref{eq:leaf-inverse}. Moreover,
\begin{align*}
  \norm[H^s]{\Delta_P^{-1}f}^2
  &=\sum_{\xi\ne0}\langle\xi\rangle^{2s}
    |2\pi\xi_P|^{-4}|\widehat f(\xi)|^2\\
  &\le\gamma^{-4}
    \sum_{\xi\ne0}\langle\xi\rangle^{2s+4\tau}
    |\widehat f(\xi)|^2
  =\gamma^{-4}\norm[H^{s+2\tau}]{f}^2.
\end{align*}
This proves the estimate, smoothness, and surjectivity. If
$\Delta_Pw=0$ and $w$ has zero average, then every Fourier coefficient of
$w$ vanishes, proving injectivity.
\end{proof}

For a leaf-indexed matrix field
\[
  R=(R^1,\dots,R^m),
  \qquad R^i\in C^\infty(\T^N;\operatorname{End}(Q)),
\]
define
\begin{equation}
  R_{\parallel}^j
  =-D_j\Delta_P^{-1}\!\left(\sum_{i=1}^mD_iR^i\right),
  \qquad
  \mathbb P_{\df}R=R-R_{\parallel}.
\label{eq:projections}
\end{equation}
At nonzero $\xi\in \Z^N$, we have 
\[
\begin{aligned}
  \widehat{R_\parallel^j}(\xi)=
  -\frac{2\pi i\xi_{P,j}}{|2\pi\xi_P|^2}
  \sum_{i=1}^{m}2\pi i\xi_{P,i}\widehat{R^i}(\xi)
  =
  \sum_{i=1}^{m}
  \frac{\xi_{P,j}\xi_{P,i}}{|\xi_P|^2}
  \widehat{R^i}(\xi).
\end{aligned}
\]
The matrix $\xi_P\xi_P^T/|\xi_P|^2$ is an orthogonal projection in the
leaf-index space, acting separately on each matrix entry. At $\xi=0$,
the definitions give $\widehat{R_\parallel}(0)=0$ and
$\widehat{\mathbb P_{\df}R}(0)=\widehat R(0)$. Hence
\[
  \norm[H^s]{R_\parallel}^2
  +
  \norm[H^s]{\mathbb P_{\df}R}^2
  =
  \norm[H^s]{R}^2,
  \qquad s\in\mathbb R,
\]
so both projections are bounded on every $H^s$ without derivative loss.
At every nonzero Fourier index the coefficients of $\mathbb P_{\df}R$
are orthogonal to $\xi_P$, while derivatives annihilate the zero mode.
Consequently,
\begin{equation}
  \sum_{j=1}^m D_j(\mathbb P_{\df}R)^j=0.
\label{eq:df-div}
\end{equation}
Apply this decomposition to $K$:
\begin{equation}
  K_{\df}=\mathbb P_{\df}K,
  \qquad
  K_{\parallel}=K-K_{\df}.
\label{eq:Kdecomp}
\end{equation}
We now analyze each term $K_{\df}$ and $K_{\parallel}$ separately. 

\begin{Lemma}
\label{lem:Kparallel}
There is a smooth tame map
\[
  T:\mathcal U\times Y\longrightarrow
  C^\infty_0(\T^N;\so(q))^m,
\]
such that, for every fixed $u$, the map $r\mapsto T_u(r)$ is linear.  It is
given componentwise by
\begin{equation}
  \bigl(T_u(r)^j\bigr)_{ab}
  =D_j\Delta_P^{-1}
  \Bigl(J_a\cdot\partial_b r
       -J_b\cdot\partial_a r\Bigr),
\label{eq:Kparallel-formula}
\end{equation}
where $J_a=n_a+\partial_a u$.  Moreover,
\begin{equation}
  \sum_{j=1}^mD_j\bigl(T_u(r)^j\bigr)_{ab}
  =-J_a\cdot\partial_b r
   +J_b\cdot\partial_a r,
\label{eq:Tu-divergence}
\end{equation}
and the component $K_{\parallel}$ of $K$ satisfies
\begin{equation}
  K_{\parallel}=T_u(\cP(u,g)).
\label{eq:Kparallel-residual}
\end{equation}
More precisely, if $b_0>N/2$ is fixed, then for every $s\ge b_0$,
\begin{align}
  \norm[H^s]{T_u(r)}
  \le C_s\gamma^{-1}\Bigl[
  &(1+\norm[H^{b_0+1}]{u})
    \norm[H^{s+\tau+1}]{r}
  \notag\\
  &+\norm[H^{s+\tau+1}]{u}
    \norm[H^{b_0+1}]{r}
  \Bigr].
\label{eq:Tu-tame-estimate}
\end{align}
Thus $T_u$ has a fixed loss of at most $\tau+1$ derivatives.
\end{Lemma}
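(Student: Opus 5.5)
Define $T_u(r)$ by \eqref{eq:Kparallel-formula} and check the claimed properties in turn, in the order: well-definedness, \eqref{eq:Tu-divergence}, skew-symmetry, \eqref{eq:Kparallel-residual}, and finally the tame estimate.

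\emph{Well-definedness and the divergence identity.} The argument $S_{ab}(u,r)=J_a\cdot\partial_b r-J_b\cdot\partial_a r$ must have zero average before $\Delta_P^{-1}$ can be applied. Expanding $J_a=n_a+\partial_a u$ gives
\[
  S_{ab}=\partial_b r^a-\partial_a r^b+\partial_a u\cdot\partial_b r-\partial_b u\cdot\partial_a r .
\]
The first two terms are derivatives and so average to zero. For the last two, integrate by parts once:
\[
  \int\partial_a u\cdot\partial_b r=-\int\partial_b\partial_a u\cdot r=\int\partial_b u\cdot\partial_a r,
\]
so they cancel in mean. Hence $S_{ab}\in C^\infty_0$, and $T_u(r)^j=D_j\Delta_P^{-1}S$ is well defined, has zero average (it is a derivative), and is skew in $(a,b)$ because $S$ is. This gives the target space $C^\infty_0(\T^N;\so(q))^m$, and linearity in $r$ is evident. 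Summing $D_j$ over $j$ gives $-\Delta_P\Delta_P^{-1}S=-S$, which is \eqref{eq:Tu-divergence}.

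\emph{Identification \eqref{eq:Kparallel-residual}.} By \eqref{eq:projections},
\[
  K_\parallel^j=-D_j\Delta_P^{-1}\Bigl(\sum_iD_iK^i\Bigr).
\]
By \eqref{eq:Kdiv}, $\sum_iD_iK^i_{ab}=-J_a\cdot\partial_b\cH+J_b\cdot\partial_a\cH$. Since $\cH=\cP+c$ with $c$ constant (\cref{lem:fixed-target}), we have $\partial\cH=\partial\cP$, so $\sum_iD_iK^i=-S(u,\cP)$. Therefore $K_\parallel^j=D_j\Delta_P^{-1}S(u,\cP)=T_u(\cP)^j$.

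\emph{Tame estimate and smooth tameness.} I expect this to be the main, though routine, obstacle, because of the stated loss $\tau+1$ rather than the naive $2\tau+1$. Applying \eqref{eq:leaf-inverse-estimate} directly would give $\|D_j\Delta_P^{-1}f\|_{H^s}\le C\gamma^{-2}\|f\|_{H^{s+2\tau+1}}$. To do better, use the symbol directly: the multiplier of $D_j\Delta_P^{-1}$ is $2\pi i\xi_{P,j}/|2\pi\xi_P|^2$, whose modulus is at most $|2\pi\xi_P|^{-1}\le\gamma^{-1}\langle\xi\rangle^{\tau}$ by \eqref{eq:dioph}. This yields
\[
  \|D_j\Delta_P^{-1}f\|_{H^s}\le\gamma^{-1}\|f\|_{H^{s+\tau}} .
\]
Then estimate $\|S\|_{H^{s+\tau}}$ by the tame product estimate \eqref{eq:tame-product}, applied to products of $J$ (one derivative of $u$) with $\partial r$ (one derivative of $r$), with low norms at level $b_0$. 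This produces $\|u\|_{H^{s+\tau+1}}\|r\|_{H^{b_0+1}}$ and $(1+\|u\|_{H^{b_0+1}})\|r\|_{H^{s+\tau+1}}$, which is \eqref{eq:Tu-tame-estimate}. Since $T$ is the composition of a fixed tame linear multiplier with a map that is polynomial (bilinear plus linear) in $(\partial u,\partial r)$, all its Fréchet derivatives are of the same form. The multilinear tame bounds therefore follow from the same product estimate, and $T$ is smooth tame.
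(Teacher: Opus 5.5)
Your proposal is correct and follows the paper's proof essentially step by step. The zero-average argument is the same: the paper rewrites $J_a\cdot\partial_b r-J_b\cdot\partial_a r$ pointwise as $\partial_b(J_a\cdot r)-\partial_a(J_b\cdot r)$ using $\partial_bJ_a=\partial_aJ_b$, which is your integration by parts performed before averaging rather than after. The remaining steps coincide with the paper's: the divergence identity via $-\Delta_P\Delta_P^{-1}$, the identification through \eqref{eq:Kdiv} and the constancy of $\cH-\cP$, and the loss $\tau+1$ from the symbol bound $|2\pi\xi_P|^{-1}\le\gamma^{-1}\langle\xi\rangle^{\tau}$ combined with the tame product estimate.
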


\begin{proof}
For $r\in Y$, set
\[
  G_{ab}=J_a\cdot\partial_b r-J_b\cdot\partial_a r.
\]
Since $\partial_bJ_a=\partial_aJ_b$, we can write
\[
  G_{ab}=\partial_b(J_a\cdot r)-\partial_a(J_b\cdot r),
\]
so $G_{ab}\in C^\infty_0(\mathbb{T}^N; \mathbb{R})$. Thus the right-hand side of \eqref{eq:Kparallel-formula} is well
defined by \cref{lem:leafwise-laplacian}. It is skew in $a,b$, has zero
average, and is linear in $r$. Moreover,
\[
  \sum_{j=1}^mD_j\bigl(T_u(r)^j\bigr)_{ab}=\sum_{j=1}^{m}D_j(D_j\Delta_P^{-1} G_{ab})=-\Delta_P\Delta_P^{-1} G_{ab}
  =-G_{ab},
\]
which proves \eqref{eq:Tu-divergence}.

By the definition of $K_{\parallel}$ and
\eqref{eq:Kdiv}, we have
\[
  K_\parallel^j
  =-D_j\Delta_P^{-1}\!\left(\sum_{i=1}^mD_iK^i\right)
  =T_u(\cP(u,g))^j,
\]
because $\cH(u,g)-\cP(u,g)$ is spatially constant. This proves
\eqref{eq:Kparallel-residual}.

The multiplier of $D_j\Delta_P^{-1}$ is bounded in magnitude by
$\gamma^{-1}\langle\xi\rangle^\tau$. Hence
\[
  \norm[H^s]{D_j\Delta_P^{-1}F}
  \le C\gamma^{-1}\norm[H^{s+\tau}]{F}
\]
for zero-average $F$. Applying the tame product estimate to
$J_a\cdot\partial_b r$, and using $J=I_Q+\partial_Qu$, gives
\eqref{eq:Tu-tame-estimate}. Since the remaining operations are fixed tame
linear maps, multiplication, and differentiation, it follows that $(u,r)\mapsto T_u(r)$ is
smooth tame.
\end{proof}

\begin{Lemma}
\label{lem:hodge}
Let $R=(R^1,\dots,R^m)$ be a smooth matrix-valued field satisfying
\begin{equation}
  \sum_{i=1}^mD_iR^i=0,
  \qquad
  \int_{\T^N}R^i\,dz=0,
  \qquad
  (R^i)^T=-R^i.
\label{eq:Rconditions}
\end{equation}
Then there are smooth matrix fields $\Theta^{ij}$ such that
\begin{equation}
  \Theta^{ij}=-\Theta^{ji},
  \qquad
  (\Theta^{ij})^T=-\Theta^{ij},
  \qquad
  \sum_{i=1}^mD_i\Theta^{ij}=R^j,
\label{eq:Hconditions}
\end{equation}
and, for every $s\in\R$,
\begin{equation}
  \norm[H^s]{\Theta}
  \le C_s\gamma^{-1}\norm[H^{s+\tau}]{R}.
\label{eq:Hestimate}
\end{equation}
The map $R\mapsto \Theta$ is linear tame.
\end{Lemma}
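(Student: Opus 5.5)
Define $\Theta$ explicitly by a Fourier-side Hodge-type formula. This is the standard way to write a divergence-free vector field as the divergence of an antisymmetric tensor. Set
\[
  \Theta^{ij}=D_i\Delta_P^{-1}R^j-D_j\Delta_P^{-1}R^i .
\]
Each $R^i$ has zero average, so $\Delta_P^{-1}R^i$ is well defined by \cref{lem:leafwise-laplacian}; the inverse acts entrywise on the $q\times q$ matrix.

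The algebraic properties are immediate. Antisymmetry $\Theta^{ij}=-\Theta^{ji}$ holds by construction. Transposition in the matrix indices commutes with the scalar Fourier multipliers $D_i\Delta_P^{-1}$, so $(R^i)^T=-R^i$ gives $(\Theta^{ij})^T=-\Theta^{ij}$.

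For the divergence identity, compute
\[
  \sum_{i}D_i\Theta^{ij}
  =\sum_i D_i^2\Delta_P^{-1}R^j-D_j\Delta_P^{-1}\Bigl(\sum_iD_iR^i\Bigr).
\]
The first term equals $-\Delta_P\Delta_P^{-1}R^j=-R^j$, and the second vanishes by the divergence-free hypothesis, since all operators are constant-coefficient Fourier multipliers and commute. This produces $-R^j$ rather than $R^j$. I will therefore flip the overall sign and take
\[
  \Theta^{ij}=D_j\Delta_P^{-1}R^i-D_i\Delta_P^{-1}R^j ,
\]
and re-check that the divergence is now $+R^j$. On the Fourier side this is transparent: at $\xi\neq0$ the multiplier sends $\widehat R^j$ to $\widehat R^j$ times $\sum_i\xi_{P,i}^2/|\xi_P|^2=1$, minus a term proportional to $\sum_i\xi_{P,i}\widehat R^i=0$.

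For the estimate, the symbol of $D_i\Delta_P^{-1}$ is $2\pi i\xi_{P,i}/|2\pi\xi_P|^2$, bounded by $|2\pi\xi_P|^{-1}\le\gamma^{-1}\langle\xi\rangle^{\tau}$ by \eqref{eq:dioph}. Exactly as in the proof of \cref{lem:Kparallel}, this gives $\norm[H^s]{D_i\Delta_P^{-1}F}\le C\gamma^{-1}\norm[H^{s+\tau}]{F}$ for every real $s$. Summing over finitely many index pairs yields \eqref{eq:Hestimate}.

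Smoothness of $\Theta$ follows because $R\in C^\infty$ lies in every $H^{s}$. The map $R\mapsto\Theta$ is a fixed linear Fourier multiplier with loss $\tau$ and base $0$, hence linear tame. No step is a real obstacle; the only care needed is the sign convention and confirming that the zero-average hypothesis is exactly what makes $\Delta_P^{-1}$ applicable.
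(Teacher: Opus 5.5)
Your proposal is correct and is the same construction as the paper's: your corrected formula $\Theta^{ij}=D_j\Delta_P^{-1}R^i-D_i\Delta_P^{-1}R^j$ has exactly the Fourier symbol $\frac{1}{i|\eta|^2}\bigl(\eta_i\widehat R^j(\xi)-\eta_j\widehat R^i(\xi)\bigr)$ used in the paper, and your divergence check, the two skew-symmetries, and the Diophantine bound $|\eta_i|/|\eta|^2\le\gamma^{-1}\langle\xi\rangle^{\tau}$ are the paper's steps. The only difference is presentation: you phrase the construction through the operators of \cref{lem:leafwise-laplacian}, while the paper writes out the Fourier coefficients and sets the zero mode to zero.
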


\begin{proof}
For a nonzero Fourier frequency set
\[
  \eta_i=2\pi(\xi_P)_i,
  \qquad |\eta|^2=|2\pi\xi_P|^2,
\]
and define
\begin{equation}
  \widehat \Theta^{ij}(\xi)
  =\frac{1}{i|\eta|^2}
   \bigl(\eta_i\widehat R^j(\xi)
        -\eta_j\widehat R^i(\xi)\bigr).
\label{eq:Hfourier}
\end{equation}
Set the zero mode to zero.  This Fourier prescription defines a smooth
field.  Indeed, the Diophantine condition gives
\[
  |2\pi\xi_P|^{-1}\le \gamma^{-1}\langle\xi\rangle^\tau,
\]
and therefore
\begin{equation}
  |\widehat \Theta(\xi)|
  \le C|2\pi\xi_P|^{-1}|\widehat R(\xi)|
  \le C\gamma^{-1}\langle\xi\rangle^\tau
       |\widehat R(\xi)|.
\label{eq:Hfourier-bound}
\end{equation}
Since $R$ is smooth, its Fourier coefficients are rapidly decreasing. Therefore, the same is true for the coefficients defined by \eqref{eq:Hfourier}.
Hence the Fourier series for $\Theta$ converges in $C^\infty$.  Squaring
\eqref{eq:Hfourier-bound}, multiplying by $\langle\xi\rangle^{2s}$, and
summing over $\xi$ gives \eqref{eq:Hestimate}. In particular, the map
$R\mapsto \Theta$ is linear tame.

It remains to verify the stated identities.  The divergence-free condition is
$\sum_{i=1}^m\eta_i\widehat R^i=0$, so
\[
  i\sum_{i=1}^m\eta_i\widehat \Theta^{ij}=\widehat R^j.
\]
The two skew symmetries in \eqref{eq:Hconditions} follow directly from \eqref{eq:Hfourier}.
\end{proof}

By \eqref{eq:Kzero}, \eqref{eq:df-div}, and the skew-symmetry of $K$, the field $K_{\df}$ satisfies all hypotheses of \cref{lem:hodge}. Apply that lemma with $R=K_{\df}$ and denote the resulting potential by $\Theta^{ij}$.  Since $D_iD_jw=D_jD_iw$ and $\Theta^{ij}=-\Theta^{ji}$,
\begin{equation}
\begin{split}
  -\sum_{i=1}^mD_i\!\left(
    \sum_{j=1}^m\Theta^{ij}D_jw
  \right)
  &=-\sum_{i,j=1}^m(D_i\Theta^{ij})D_jw
    -\sum_{i,j=1}^m\Theta^{ij}D_iD_jw\\
  &=-\sum_{j=1}^mK_{\df}^jD_jw.
\end{split}
\label{eq:Habsorb}
\end{equation}
Thus the part of the transformed operator that survives on the solution set is
\begin{equation}
  \mathcal{L}_{C(u,g)}w   
  :=-\sum_{i=1}^mD_i\!\left(
      \sum_{j=1}^mM^{ij}D_jw
    \right)
   -\sum_{j=1}^mK_{\df}^jD_jw
\label{eq:Atilde-def}
\end{equation}
and has pure divergence form
\begin{equation}
  \mathcal{L}_{C(u,g)}w
  =-\sum_{i=1}^mD_i\!\left(
      \sum_{j=1}^mC^{ij}D_jw
    \right),
  \qquad
  C^{ij}=M^{ij}+\Theta^{ij}.
\label{eq:Cdef}
\end{equation}
The two skew symmetries of $\Theta$ imply
\begin{equation}
  (\Theta^{ij})^T=-\Theta^{ij}=\Theta^{ji},
\end{equation}
so
\begin{equation}
  (C^{ij})^T=C^{ji}.
\label{eq:Cblocksym}
\end{equation}
The two skew symmetries of $\Theta$ do not force its contribution to
$\sum_{i,j=1}^m\zeta_i^TC^{ij}\zeta_j$ to vanish. Coercivity will follow
instead from closeness of the full coefficient $C$ to its positive
constant value at the flat reference configuration.
Combining the ground-state identity with
$K=K_{\df}+T_u(\cP(u,g))$ gives the final reduced form:
\begin{equation}
\begin{aligned}
  J^T\Bigl(
  D_u\cH(u,g)[Jw]
  -\sum_{a=1}^q(\partial_a\cH(u,g))w^a
  \Bigr)
  ={}&-\sum_{i=1}^mD_i\!\left(
      \sum_{j=1}^mC^{ij}D_jw
    \right)\\
   &-\sum_{j=1}^m(T_u(\cP(u,g)))^jD_jw.
\end{aligned}
\label{eq:reduced-ground-state}
\end{equation}
 The remaining technical task is to establish tame estimates for the
inverse of the block-symmetric divergence-form operator
\[w\mapsto-\sum_{i=1}^mD_i\!\left(\sum_{j=1}^mC^{ij}D_jw\right) \]
and for the residual-dependent first-order term
\[\sum_{j=1}^m(T_u(\mathcal P(u,g)))^jD_jw,\]
which generates the quadratic error in the
Nash-Moser scheme.

\subsection{The geometric coefficient map}
\label{sec:geometric-coefficient}
The preceding reduction produces the block-symmetric coefficient
$C=C(u,g)=M(u,g)+\Theta(u,g)$.  We now identify its value at the flat reference
configuration and record the smooth tame dependence of $C$ on the geometric
data.

At the flat reference configuration,
\[
  L_{g_0}(z,0,p)
  =
  \sqrt{\det(I_m+p^Tp)}
  =
  1+\frac12|p|^2+O(|p|^4).
\]
Consequently,
\[
  \frac{\partial^2 L_{g_0}}
       {\partial p_i^a\partial p_j^b}(z,0,0)
  =
  \delta_{ij}\delta_{ab},
\]
or equivalently,
\[
  A^{ij}(0,g_0)=\delta_{ij}I_Q.
\]
Moreover, at \((u,g)=(0,g_0)\) one has
\[
  J=I_Q,\qquad B^i=0,\qquad K=0,\qquad \Theta=0.
\]
It follows that
\[
  M^{ij}(0,g_0)=\delta_{ij}I_Q
  \quad\text{and}\quad
  C^{ij}(0,g_0)=\delta_{ij}I_Q=:C_0^{ij}.
\]
In particular,
\[
  \sum_{i,j=1}^m\zeta_i^TC_0^{ij}\zeta_j
  =
  \sum_{i=1}^m|\zeta_i|^2
\]
for every \(\zeta\in P^*\otimes Q\).

\begin{Lemma}
\label{lem:coefficient-tame}
The coefficient map
\[
  (u,g)\longmapsto C(u,g)=M(u,g)+\Theta(u,g)
\]
is smooth tame on a neighborhood of $(0,g_0)$. Consequently, for every fixed Sobolev index $s$ and every $\varepsilon>0$,
a sufficiently small $C^\infty$-neighborhood of $(0,g_0)$ satisfies
\begin{equation}
  \norm[H^s]{C(u,g)-C_0}<\varepsilon.
\label{eq:C-local-closeness}
\end{equation}
\end{Lemma}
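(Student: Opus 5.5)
The plan is to write $C(u,g)$ as a composition of maps that are already known to be smooth tame, and then to get the closeness statement \eqref{eq:C-local-closeness} from continuity of $C$ at $(0,g_0)$ in the $C^\infty$ topology.

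\emph{The symmetric part $M$.} Recall $M^{ij}=J^TA^{ij}J$, where $J=I_Q+\partial_Qu$ and
\[
  A^{ij}(u,g)=(L_g)_{p_ip_j}\bigl(z,u,D_Pu\bigr).
\]
In the proof of \cref{lem:H-smooth-tame}, the first derivatives of $L_g$ were written as smooth functions of $u$, $D_Pu$, $g\circ\Phi_u$ and $(\partial g)\circ\Phi_u$. Differentiating once more in $p$ keeps the same structure. Concretely, $A^{ij}$ is a smooth function $\mathscr F(z,r,p)$ depending smoothly on $g$, namely a polynomial in $g_{z+r}$, the $p_i$, $(\det G)^{\pm1/2}$ and $G^{-1}$, evaluated along $(z+u,D_Pu)$. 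On the set where $\norm[C^0]{D_Pu}<\rho$ and $g$ is uniformly positive definite, $G$ is uniformly invertible. Hence \cref{prop:composition-evaluation}\textup{(i)}, applied to $g\mapsto g\circ(\Id+u)$, followed by \cref{lem:finite-substitution} for the smooth outer function, shows that $(u,g)\mapsto A(u,g)$ is smooth tame. The same argument applies to $B^i$. The entries of $J$ and $D_iJ$ are linear differential operators in $u$, so they are tame linear. Products are handled by the tame product estimate \eqref{eq:tame-product}. Therefore $M$, and also $K$ given by \eqref{eq:K}, are smooth tame, since compositions of smooth tame maps are smooth tame.

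\emph{The skew potential $\Theta$.} $\Theta$ is obtained from $K$ by two fixed linear maps. First apply $\mathbb P_{\df}$, which is bounded on each $H^s$ with no loss. Then apply the Fourier multiplier of \cref{lem:hodge}, which is tame linear with loss $\tau$ by \eqref{eq:Hestimate}. A continuous tame linear map is smooth tame, with all higher derivatives vanishing. So $\Theta(u,g)=\mathcal T(\mathbb P_{\df}K(u,g))$, with $\mathcal T$ the map of \cref{lem:hodge}, is smooth tame. Note that the definition of $\mathcal T$ by \eqref{eq:Hfourier} does not need the hypotheses \eqref{eq:Rconditions}; they are only used for the identities. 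Hence $\Theta$ is defined by one globally linear formula on the whole neighborhood, and $C=M+\Theta$ is smooth tame.

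\emph{Closeness to $C_0$.} A smooth tame map is in particular continuous in the Fréchet topology. From the computation before the statement, $C(0,g_0)=C_0$. Fix $s$. Continuity of the map into $H^s$ gives a $C^\infty$-neighborhood on which \eqref{eq:C-local-closeness} holds. For a quantitative version, one can instead use the tame estimate of $DC$ together with the mean value theorem: this gives $\norm[H^s]{C-C_0}\lesssim\norm[H^{s+r}]{u}+\norm[H^{s+r}]{g-g_0}$ on a convex neighborhood, with $r$ the degree of the estimate (roughly $\tau+2$).

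\emph{Main obstacle.} I expect the hardest point to be the tame bookkeeping for the composition $g\circ\Phi_u$ inside $A$ and $B$, jointly in $(u,g)$. This is where the $C^1$-smallness of $u$ and the uniform positivity of $g$ are needed. I would take it directly from \cref{prop:composition-evaluation}\textup{(ii)}, using the fact that the dependence of the integrand $\mathscr F$ on $g$ is affine through the values of $g$, so that joint tameness follows exactly as in \cref{lem:H-smooth-tame}.
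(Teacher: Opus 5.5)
Your proposal is correct and follows the paper's proof essentially step by step. $M$ and $K$ are smooth tame via \cref{prop:composition-evaluation}\textup{(i)}, the product estimate \eqref{eq:tame-product}, and \cref{lem:finite-substitution}. $\Theta$ is then obtained by composing with the loss-free projection $\mathbb P_{\df}$ and the tame linear multiplier of \cref{lem:hodge}. Finally, \eqref{eq:C-local-closeness} follows from continuity at $(0,g_0)$, where $C(0,g_0)=C_0$.

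Your closing remark that the integrand depends affinely on $g$ is imprecise: only the Gram matrix $G$ is linear in $g$, and $L_g$ is nonlinear in $G$. The route you actually carry out, composition with $\Phi_u$ followed by finite substitution, is the paper's route and does not need that remark.
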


\begin{proof}
The coefficient $M(u,g)$ is an explicit expression involving finite-order
derivatives and composition with $\Phi_u$. By \cref{prop:composition-evaluation}, \eqref{eq:tame-product}, and
\cref{lem:finite-substitution}, the map $(u,g)\mapsto M(u,g)$ is smooth
tame.  The same reasoning applied to \eqref{eq:K} shows that
$(u,g)\mapsto K(u,g)$ is smooth tame.  Since $\mathbb P_{\df}$ in
\eqref{eq:projections} is a fixed order-zero Fourier multiplier,
$(u,g)\mapsto K_{\df}(u,g)=\mathbb P_{\df}K(u,g)$ is smooth tame as well.
By \cref{lem:hodge}, the map $K_{\df}\mapsto \Theta$ is linear tame. In
particular,
\[
  \norm[H^s]{\Theta(u,g)}
  \le C_s\gamma^{-1}
  \norm[H^{s+\tau}]{K_{\df}(u,g)}.
\]
Therefore $(u,g)\mapsto \Theta(u,g)$ is smooth tame by composition, and hence so
is $C=M+\Theta$.  Finally, \eqref{eq:C-local-closeness}
follows from continuity of the smooth tame map $C$ at $(0,g_0)$.
\end{proof}

\section{Tame inversion of the transformed operator}
\label{sec:inverse-main}

We now study the abstract leafwise divergence-form operator
\begin{equation}
  \mathcal{L}_Cw=-\sum_{i=1}^mD_i\!\left(
    \sum_{j=1}^mC^{ij}D_jw
  \right).
\label{eq:LC}
\end{equation}
Fix a constant block-symmetric coefficient $C_0=(C_0^{ij})$ satisfying
\begin{equation}
  (C_0^{ij})^T=C_0^{ji},
  \qquad
  \sum_{i,j=1}^m\zeta_i^TC_0^{ij}\zeta_j
  \ge2\lambda\sum_{i=1}^m|\zeta_i|^2
\label{eq:C0-coercive}
\end{equation}
for some $\lambda>0$ and every
$(\zeta_1,\ldots,\zeta_m)\in P^*\otimes Q$. The Riemannian application has
$C_0^{ij}=\delta_{ij}I_Q$. We prove a uniform tame inverse theorem for
coefficients close to $C_0$ and then establish smooth tame dependence of the
inverse on $C$.

\subsection{The Diophantine Poincar\'e estimate}
\label{sec:dioph-poincare}

The energy identity for \eqref{eq:LC} controls only the derivatives along the
fixed plane $P$. Write
\[
  D_Pf=(D_1f,\ldots,D_mf).
\]
The following estimate recovers the ambient Sobolev control from $D_Pf$.

\begin{Lemma}[Diophantine Poincar\'e estimate]
\label{lem:poincare}
If $f\in C^\infty_0(\T^N;Q)$, then for every $s\in\R$, we have
\begin{equation}
  \norm[H^s]{f}
  \le C\gamma^{-1}\norm[H^{s+\tau}]{D_Pf}.
\label{eq:poincare}
\end{equation}
\end{Lemma}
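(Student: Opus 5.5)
This is a direct Fourier-side computation, parallel to the proof of \cref{lem:leafwise-laplacian}. Since $f$ has zero average, $\widehat f(0)=0$, so only nonzero frequencies $\xi\in\Z^N\setminus\{0\}$ contribute to $\norm[H^s]{f}^2$.

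On the mode $e_\xi$, each leafwise derivative acts by multiplication:
\[
  \widehat{D_jf}(\xi)=2\pi i\,\xi_j\,\widehat f(\xi).
\]
Summing the componentwise squared norms over the leaf index $j$ gives the pointwise identity
\[
  |\widehat{D_Pf}(\xi)|^2=|2\pi\xi_P|^2\,|\widehat f(\xi)|^2 .
\]
The Diophantine condition \eqref{eq:dioph} gives $|2\pi\xi_P|^{-2}\le\gamma^{-2}\langle\xi\rangle^{2\tau}$ for $\xi\neq0$. Hence
\[
  \norm[H^s]{f}^2=\sum_{\xi\ne0}\langle\xi\rangle^{2s}|\widehat f(\xi)|^2
  \le\gamma^{-2}\sum_{\xi\ne0}\langle\xi\rangle^{2s+2\tau}|\widehat{D_Pf}(\xi)|^2
  =\gamma^{-2}\norm[H^{s+\tau}]{D_Pf}^2 .
\]
The last equality uses that $D_Pf$ has vanishing zero mode, so the sum over $\xi\ne0$ is the full $H^{s+\tau}$ norm. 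Taking square roots yields \eqref{eq:poincare}, with $C=1$ in this normalization. Any other fixed convention for vector-valued norms only changes $C$ by a dimensional factor.

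There is no real obstacle here. The only points needing care are the bookkeeping conventions: the norm of the $m$-tuple $D_Pf$ is taken componentwise, consistent with the convention in \cref{sec:tame-hamilton}, and the estimate holds for all real $s$ because the argument is purely a multiplier bound on each frequency. I would also note that $f$ need only lie in $H^{s+\tau}$ with zero mean, which is useful when applying the lemma to energy estimates for $\mathcal L_C$ in the sequel.
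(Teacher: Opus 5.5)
Your proof is correct and matches the paper's argument: the zero mode vanishes, you use the identity $\sum_{j}|\widehat{D_jf}(\xi)|^2=|2\pi\xi_P|^2|\widehat f(\xi)|^2$, and you apply the Diophantine bound frequency by frequency, which gives $C=1$ in the normalization \eqref{eq:Sobolev-grading}. In your closing aside, the natural hypothesis for a nontrivial conclusion is that $f$ has zero mean and $D_Pf\in H^{s+\tau}$, not that $f\in H^{s+\tau}$.
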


\begin{proof}
Write the Fourier expansion of $f$ as 
\[f(z)=\sum_{\xi\in \Z^N}\hat{f}(\xi)e^{2\pi i \xi\cdot z}.\]
Since $f\in C^\infty_0$, it follows that $\widehat f(0)=0$. Direct computation yields:
\[
  \sum_{j=1}^m|\widehat{D_jf}(\xi)|^2
  =|2\pi\xi_P|^2|\widehat f(\xi)|^2,
\]
the Diophantine condition gives
\begin{align*}
  \norm[H^s]{f}^2
  &\le\gamma^{-2}
  \sum_{\xi\ne0}\langle\xi\rangle^{2(s+\tau)}
  |2\pi\xi_P|^2|\widehat f(\xi)|^2\\
  &=\gamma^{-2}\norm[H^{s+\tau}]{D_Pf}^2.
\end{align*}
Taking square roots proves the estimate.
\end{proof}

\subsection{Uniform inversion near a constant coefficient}
\label{sec:inverse}

Let
\[
  \mathcal E_{\mathrm{sym}}
  =\left\{C=(C^{ij}):
  C^{ij}\in C^\infty(\T^N;\operatorname{End}(Q)),\ 
  (C^{ij})^T=C^{ji}\right\}.
\]
Fix an integer
\begin{equation}
  s_*>\frac N2+2.
\label{eq:sstar}
\end{equation}
For $C\in\mathcal E_{\mathrm{sym}}$, write $E=C-C_0$. Sobolev embedding
shows that, for $\varepsilon_*>0$ sufficiently small,
\begin{equation}
  \norm[H^{s_*+1}]{E}<\varepsilon_*
\label{eq:Cassump-small}
\end{equation}
implies
\begin{equation}
  \sum_{i,j=1}^m\zeta_i^TC^{ij}(z)\zeta_j
  \ge\lambda\sum_{i=1}^m|\zeta_i|^2
\label{eq:Ccoercive-inverse}
\end{equation}
for all $z$ and all $(\zeta_1,\ldots,\zeta_m)\in Q^m$. Denote this
coefficient neighborhood by
\begin{equation}
  \mathfrak C_{\varepsilon_*}
  =\left\{C\in\mathcal E_{\mathrm{sym}}:
  \norm[H^{s_*+1}]{C-C_0}<\varepsilon_*\right\}.
\label{eq:coefficient-neighborhood}
\end{equation}
The coercivity above gives the pointwise quadratic bound used in the
energy identity.

The high-order estimate relies on the following standard commutator bound.

\begin{Lemma}
\label{lem:commutator}
For every integer $r\ge1$, every smooth matrix field $E$, and every smooth
vector field $v$, we have
\begin{equation}
  \left(\sum_{|\alpha|\le r}
  \norm[L^2]{[\partial^\alpha,E]v}^2\right)^{1/2}
  \le C_r\Bigl(
  \norm[L^\infty]{\nabla E}\norm[H^{r-1}]{v}
  +\norm[H^r]{E}\norm[L^\infty]{v}
  \Bigr),
\label{eq:commutator}
\end{equation}
where the commutator is defined as 
\[[\partial^\alpha, E]v:=\partial^\alpha(Ev)-E(\partial^\alpha v).\]
\end{Lemma}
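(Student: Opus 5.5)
We prove the commutator bound by expanding with the Leibniz rule and controlling each resulting product through Gagliardo--Nirenberg interpolation between $L^\infty$ and Sobolev norms. The estimates hold componentwise, so it suffices to treat scalar $E$ and scalar $v$.

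For a multi-index $\alpha$ with $1\le|\alpha|\le r$, the Leibniz rule gives
\[
  [\partial^\alpha,E]v=\sum_{0<\beta\le\alpha}\binom{\alpha}{\beta}(\partial^\beta E)(\partial^{\alpha-\beta}v).
\]
When $|\alpha|=0$ the commutator vanishes. Each term has $|\beta|\ge1$, so at least one derivative falls on $E$. We write $\partial^\beta E=\partial^{\beta'}(\partial_k E)$ with $|\beta'|=|\beta|-1$. Set $F=\nabla E$, $j=|\beta|-1$ and $\ell=|\alpha-\beta|$, so that $j+\ell=|\alpha|-1\le r-1$. It therefore suffices to prove the bound
\[
  \norm[L^2]{(\partial^{j}F)(\partial^{\ell}v)}
  \le C\bigl(\norm[L^\infty]{F}\norm[H^{k}]{v}+\norm[H^{k}]{F}\norm[L^\infty]{v}\bigr),
  \qquad k=j+\ell.
\]
Since $k\le r-1$ and $\norm[H^{r-1}]{\nabla E}\le C\norm[H^r]{E}$, summing over $\alpha$ and $\beta$ then gives \eqref{eq:commutator}.

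The displayed bound is the classical Moser estimate, and we prove it as follows. The endpoint cases $j=0$ or $\ell=0$ follow directly from Hölder's inequality with one factor in $L^\infty$. For $0<j,\ell$, we apply Hölder with exponents $p=2k/j$ and $p'=2k/\ell$, which satisfy $1/p+1/p'=1/2$. The Gagliardo--Nirenberg inequality on the torus then gives
\[
  \norm[L^{2k/j}]{\partial^jF}\le C\norm[L^\infty]{F}^{1-j/k}\norm[H^k]{F}^{j/k},
\]
and the analogous estimate for $v$ with exponent $\ell/k$. Multiplying these and applying Young's inequality with exponents $k/j$ and $k/\ell$ yields the required sum. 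The torus version of Gagliardo--Nirenberg carries a lower-order term, but it is harmless because $\norm[L^\infty]{\cdot}$ dominates $\norm[L^2]{\cdot}$ when the measure has unit mass.

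The only genuinely analytic step is the Gagliardo--Nirenberg interpolation inequality on $\T^N$. If we prefer to avoid quoting it, we can reduce to $\R^N$ using a periodic partition of unity. Alternatively, we can prove it directly by integration by parts on $\int|\partial^jF|^{p}$, inducting on $j$ in the standard way. That step is the only point where care is needed; everything else is bookkeeping of the Leibniz expansion.
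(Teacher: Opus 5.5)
Your proposal is correct and is essentially the paper's proof. Both use the Leibniz expansion, H\"older's inequality with the same exponents (your ratio $j/k$ for $F=\nabla E$ equals the paper's $\theta_{|\beta|}=(|\beta|-1)/(r-1)$ at top order), Gagliardo--Nirenberg interpolation of $\nabla E$ and $v$ between $L^\infty$ and $H^{r-1}$, and weighted Young; the differences are cosmetic: you state the key step as the classical Moser estimate at every order $|\alpha|\le r$ rather than only $|\alpha|=r$, and you explain why the lower-order term in the torus Gagliardo--Nirenberg inequality is harmless.
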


A proof is given in \cref{sec:commutator-proof}. See also the related
commutator estimates in \cite{KatoPonce1988} and \cite[\S3.6]{Taylor1991}.

The next proposition establishes the uniform tame invertibility of $\mathcal{L}_C$ for coefficients $C$ sufficiently close to the fixed constant coefficient $C_0$.
\begin{Prop}
\label{prop:tame-inverse}
There exists $\varepsilon_*>0$ such that for every
$C\in\mathfrak C_{\varepsilon_*}$ and every
$f\in C^\infty_0(\T^N;Q)$, the equation
\begin{equation}
  \mathcal{L}_Cw=f
\label{eq:LCequation}
\end{equation}
has a unique solution $w\in C^\infty_0(\T^N;Q)$.  Moreover, for every
integer $s\ge0$,
\begin{equation}
  \norm[H^s]{w}
  \le A_s\left[
  \norm[H^{s+2\tau}]{f}
  +\bigl(1+\norm[H^{s+\tau}]{E}\bigr)
   \norm[H^{s_*+\tau}]{f}
  \right].
\label{eq:w-tame}
\end{equation}
The constants are uniform on the low-norm neighborhood
\eqref{eq:coefficient-neighborhood}.
\end{Prop}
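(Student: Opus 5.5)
We prove existence and uniqueness by an energy method, using Lax--Milgram in a leafwise Hilbert space together with the Diophantine Poincar\'e estimate. We then prove regularity and the tame estimate \eqref{eq:w-tame} by differentiating the equation in all ambient directions and controlling the commutators with \cref{lem:commutator}.

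\emph{Step 1: weak solvability.} Let $\mathcal D$ be the completion of $C^\infty_0(\T^N;Q)$ under $\norm[L^2]{D_Pw}$. This is a norm on zero-average fields, since the kernel of $D_P$ is trivial there by the Fourier argument of \cref{lem:poincare}. On $\mathcal D$ consider the bilinear form
\[
  \mathfrak b(w,\varphi)=\int_{\T^N}\sum_{i,j=1}^m (D_i\varphi)^TC^{ij}D_jw\,dz .
\]
By \eqref{eq:Ccoercive-inverse} it satisfies $\mathfrak b(w,w)\ge\lambda\norm[L^2]{D_Pw}^2$, and it is bounded because $\norm[L^\infty]{C}\lesssim1$ by Sobolev embedding. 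For the right-hand side we use \cref{lem:leafwise-laplacian}: write $f=\Delta_P F$, so that $\int f\cdot\varphi=\sum_i\int D_iF\cdot D_i\varphi$. This functional is bounded on $\mathcal D$ with norm at most $\norm[L^2]{D_PF}\lesssim\gamma^{-1}\norm[H^{\tau}]{f}$. Lax--Milgram (the form is not symmetric, but coercive) then gives a unique weak solution $w$ with $\norm[L^2]{D_Pw}\lesssim\norm[H^\tau]{f}$. By \cref{lem:poincare} with $s=-\tau$ we also get $\norm[H^{-\tau}]{w}\lesssim \norm[H^\tau]{f}$, so $w$ is at least a zero-average distribution. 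Uniqueness among smooth solutions follows from the same coercivity.

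\emph{Step 2: higher regularity.} Apply $\partial^\alpha$ with $|\alpha|\le r$, using ambient derivatives, which commute with $D_i$. Writing $C=C_0+E$, we get
\[
  \mathcal L_C(\partial^\alpha w)=\partial^\alpha f+\sum_{i,j=1}^m D_i\bigl([\partial^\alpha,E^{ij}]D_jw\bigr).
\]
Test this against $\partial^\alpha w$ and sum over $\alpha$. Coercivity gives
\[
  \lambda\norm[H^r]{D_Pw}^2\le \Bigl|\sum_\alpha\int \partial^\alpha f\cdot\partial^\alpha w\Bigr|+\sum_{\alpha}\norm[L^2]{[\partial^\alpha,E]D_Pw}\,\norm[L^2]{D_P\partial^\alpha w}.
\]
For the $f$ term we again insert $\Delta_P^{-1}$, which bounds it by $\norm[H^{r+\tau}]{f}\norm[H^r]{D_Pw}$. \cref{lem:commutator} bounds the commutator by
\[
  \norm[L^\infty]{\nabla E}\norm[H^{r-1}]{D_Pw}+\norm[H^r]{E}\norm[L^\infty]{D_Pw}.
\]
The first piece has a small coefficient because $\norm[L^\infty]{\nabla E}\lesssim\varepsilon_*$ by \eqref{eq:Cassump-small} and $s_*>N/2+2$. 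It is therefore absorbed after shrinking $\varepsilon_*$, or alternatively handled by induction on $r$. This yields
\[
  \norm[H^r]{D_Pw}\lesssim\norm[H^{r+\tau}]{f}+\norm[H^r]{E}\norm[L^\infty]{D_Pw}.
\]
The computation is rigorous only once $w$ is known to be regular. We justify it by running it for Fourier-truncated (Galerkin) approximations, or via difference quotients, and passing to the limit using the uniform bounds.

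\emph{Step 3: closing the tame estimate.} Combining Step 2 with \cref{lem:poincare} gives $\norm[H^s]{w}\lesssim\norm[H^{s+\tau}]{D_Pw}$. Taking $r=s+\tau$ in Step 2 then gives
\[
  \norm[H^s]{w}\lesssim \norm[H^{s+2\tau}]{f}+\norm[H^{s+\tau}]{E}\,\norm[L^\infty]{D_Pw}.
\]
It remains to bound the low-norm factor $\norm[L^\infty]{D_Pw}$ by $\norm[H^{s_*+\tau}]{f}$. For this we need Step 2 at a fixed level $r_1>N/2$, where the term $\norm[H^{r_1}]{E}\norm[L^\infty]{D_Pw}$ is itself controlled. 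I expect this to be the main obstacle, since $L^\infty$ sits at the same level as $H^{r_1}$.

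The first approach I would try is to avoid the $L^\infty$ splitting at low order. For $r\le s_*+1$, bound the commutator directly by the algebra property,
\[
  \norm[L^2]{[\partial^\alpha,E]D_Pw}\lesssim\norm[H^{s_*+1}]{E}\,\norm[H^{r-1}]{D_Pw}.
\]
The coefficient is small by \eqref{eq:Cassump-small}, so the term absorbs into the left-hand side. This yields $\norm[H^{r}]{D_Pw}\lesssim\norm[H^{r+\tau}]{f}$ for $r\le s_*$, hence
\[
  \norm[L^\infty]{D_Pw}\lesssim\norm[H^{s_*}]{D_Pw}\lesssim\norm[H^{s_*+\tau}]{f}.
\]
Substituting this into the high-order bound gives \eqref{eq:w-tame}. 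The constants depend only on $s$, $\lambda$, $\gamma$, $\tau$ and $\varepsilon_*$, so they are uniform on $\mathfrak C_{\varepsilon_*}$. Smoothness of $w$ follows since $f\in C^\infty$ and the estimate holds for every $s$.
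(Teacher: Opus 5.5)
Your proposal is correct and follows the paper's proof in its essentials: the differentiated energy identity, \cref{lem:commutator}, the Diophantine Poincar\'e estimate for both the forcing term and the recovery of ambient norms, and small-coefficient absorption at the base level $s_*$. The paper instead justifies the computation through the regularization $\mathcal L_C+\epsilon(-\Delta)$ rather than Fourier--Galerkin truncation, and closes the orders $r>s_*$ by interpolation (\cref{lem:sobolev-interpolation}) rather than induction; note that of your two options for the term $C_r\norm[L^\infty]{\nabla E}\norm[H^{r-1}]{D_Pw}$ only induction works, since $C_r$ grows with $r$ and no single $\varepsilon_*$ absorbs it for every $r$ (also, $\mathfrak b$ is in fact symmetric by the block symmetry $(C^{ij})^T=C^{ji}$, though Lax--Milgram does not need this).
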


\begin{proof}
By the definition of $\mathfrak C_{\varepsilon_*}$, after choosing
$\varepsilon_*>0$ sufficiently small, we have the uniform coercivity estimate for some fixed $\lambda>0$:
\begin{equation}
  \sum_{i,j=1}^m
  \zeta_i^T C^{ij}(z)\zeta_j
  \ge
  \lambda\sum_{i=1}^m|\zeta_i|^2
\label{eq:C-coercive-proof}
\end{equation}
for every $\zeta=(\zeta_1,\ldots,\zeta_m)\in P^*\otimes Q$ and every
$z\in\T^N$.

We first regularize the equation in the ambient directions.  For
$\epsilon>0$, let
\[
  \mathcal{L}_{C,\epsilon}=\mathcal{L}_C+\epsilon(-\Delta),
\]
where $-\Delta$ is the usual positive Laplacian on $\T^N$.  The
corresponding bilinear form is coercive on the zero-average $H^1$ space, so
Lax-Milgram gives a unique weak solution
\[
  \mathcal{L}_{C,\epsilon}w_\epsilon=f,
  \qquad
  \int_{\T^N}w_\epsilon=0.
\]
Since $\epsilon>0$, this is a strongly elliptic system on the ambient torus. Therefore, the standard elliptic regularity gives $w_\epsilon\in C^\infty$.

We now derive estimates independent of $\epsilon$. Throughout the proof, constants denoted by $C$ and $C_r$ may change from
line to line. They are uniform for coefficients in the fixed neighborhood
$\mathfrak C_{\varepsilon_*}$ and independent of $\epsilon,f,w_\epsilon$;
$C_r$ may also depend on $r$. Let $\partial^\alpha$
denote ambient coordinate derivatives. Because the constant vector fields
$D_i$ commute with $\partial^\alpha$, differentiating the regularized
equation gives
\begin{equation}\label{X}
  \mathcal{L}_{C,\epsilon}\partial^\alpha w_\epsilon
  =
  \partial^\alpha f
  +\sum_{i=1}^mD_i\!\left(
    \sum_{j=1}^m[\partial^\alpha,C^{ij}]D_jw_\epsilon
  \right).
\end{equation}

Since \(C=C_0+E\) and \(C_0\) is constant, we have
\[
  [\partial^\alpha,C^{ij}]
  =
  [\partial^\alpha,E^{ij}].
\]

Taking the \(L^2\)-inner product of \eqref{X} with \(\partial^\alpha w_\epsilon\) and integrating by
parts on \(\T^N\), we obtain
\begin{align*}
  &\int_{\T^N}
  \sum_{i,j=1}^m
  \left\langle
    C^{ij}D_j\partial^\alpha w_\epsilon,D_i\partial^\alpha w_\epsilon
  \right\rangle_0\,dz
  +
  \epsilon
  \norm[L^2]{\nabla \partial^\alpha w_\epsilon}^2
  \\
  &\qquad=
  \ip{\partial^\alpha f}{\partial^\alpha w_\epsilon}_{L^2}
  -
  \sum_{i=1}^m
  \ip{\sum_{j=1}^m
  [\partial^\alpha,E^{ij}]D_jw_\epsilon}{D_i\partial^\alpha w_\epsilon}_{L^2}.
\end{align*}
There are no boundary terms because all fields are periodic. By the coercivity \eqref{eq:C-coercive-proof},
\[
  \int_{\T^N}
  \sum_{i,j=1}^m
  \left\langle
    C^{ij}D_j\partial^\alpha w_\epsilon,D_i\partial^\alpha w_\epsilon
  \right\rangle_0\,dz
  \ge
  \lambda
  \sum_{i=1}^m
  \norm[L^2]{D_i\partial^\alpha w_\epsilon}^2.
\]
Summing over \(|\alpha|\le r\), retaining the nonnegative
\(\epsilon\)-term, and applying Cauchy--Schwarz to the commutator
contribution gives
\begin{align*}
  &\lambda
  \sum_{|\alpha|\le r}
  \sum_{i=1}^m
  \norm[L^2]{D_i\partial^\alpha w_\epsilon}^2
  +
  \epsilon
  \sum_{|\alpha|\le r}
  \norm[L^2]{\nabla\partial^\alpha w_\epsilon}^2
  \\
  &\qquad\le
  \sum_{|\alpha|\le r}
  \left|
    \ip{\partial^\alpha f}
       {\partial^\alpha w_\epsilon}_{L^2}
  \right|
  \\
  &\qquad\quad+
  \left(
    \sum_{|\alpha|\le r}
    \sum_{i=1}^m
    \norm[L^2]{\sum_{j=1}^m
  [\partial^\alpha,E^{ij}]D_jw_\epsilon}^2
  \right)^{1/2}
  \left(
    \sum_{|\alpha|\le r}
    \sum_{i=1}^m
    \norm[L^2]{D_i\partial^\alpha w_\epsilon}^2
  \right)^{1/2}.
\end{align*}
For integer \(r\), the derivative-based Sobolev norm is equivalent to
the Fourier Sobolev norm, so
\[
  \sum_{|\alpha|\le r}
  \sum_{i=1}^m
  \norm[L^2]{D_i\partial^\alpha w_\epsilon}^2
  \simeq
  \norm[H^r]{D_Pw_\epsilon}^2.
\]
Moreover, with the block convention
\[
  \bigl(
    [\partial^\alpha,E]D_Pw_\epsilon
  \bigr)_i
  :=
  \sum_{j=1}^m
  [\partial^\alpha,E^{ij}]D_jw_\epsilon,
\]
we have
\[
  \sum_{i=1}^m
  \norm[L^2]{\sum_{j=1}^m
  [\partial^\alpha,E^{ij}]D_jw_\epsilon}^2
  =
  \norm[L^2]{
    [\partial^\alpha,E]D_Pw_\epsilon
  }^2.
\]
Dropping the nonnegative \(\epsilon\)-term therefore yields
\begin{equation}
\begin{aligned}
  \lambda \norm[H^r]{D_Pw_\epsilon}^2
  \le{}&
  C_r\sum_{|\alpha|\le r}
  \left|
    \ip{\partial^\alpha f}{\partial^\alpha w_\epsilon}_{L^2}
  \right| \\
  &+C_r
  \left(
    \sum_{|\alpha|\le r}
    \norm[L^2]{[\partial^\alpha,E]D_Pw_\epsilon}^2
  \right)^{1/2}
  \norm[H^r]{D_Pw_\epsilon},
\end{aligned}
\label{eq:energy-r-continuous}
\end{equation}
for a constant $C_r>0$ independent of $\epsilon$.

For the forcing term, we use Sobolev duality followed by the Diophantine
Poincar\'e inequality of \cref{lem:poincare}:
\begin{equation}\label{Diophantine1}
  \sum_{|\alpha|\le r}
  \left|
    \ip{\partial^\alpha f}{\partial^\alpha w_\epsilon}_{L^2}
  \right|
  \le
  C_r
  \norm[H^{r+\tau}]{f}
  \norm[H^{r-\tau}]{w_\epsilon}
  \le
  C_r
  \norm[H^{r+\tau}]{f}
  \norm[H^r]{D_Pw_\epsilon}.
\end{equation}
 Since $s_*>N/2+2$, Sobolev embedding \eqref{eq:sobolev-embedding} yields
\[
  \norm[L^\infty]{\nabla E}
  \le C\norm[H^{s_*+1}]{E},
  \qquad
  \norm[L^\infty]{D_Pw_\epsilon}
  \le C\norm[H^{s_*}]{D_Pw_\epsilon}.
\]
Combining these with the tame commutator estimate \eqref{eq:commutator} with $v=D_Pw_\epsilon$, we get
\begin{equation}
\begin{aligned}
  &\left(
    \sum_{|\alpha|\le r}
    \norm[L^2]{[\partial^\alpha,E]D_Pw_\epsilon}^2
  \right)^{1/2} \\
  &\qquad\le
  C_r\left(
    \norm[H^{s_*+1}]{E}\norm[H^{r-1}]{D_Pw_\epsilon}
    +\norm[H^r]{E}\norm[H^{s_*}]{D_Pw_\epsilon}
  \right).
\end{aligned}
\label{eq:commutator-tame-continuous}
\end{equation}
If $\norm[H^r]{D_Pw_\epsilon}=0$, the next estimate is immediate.
Otherwise, combining \eqref{eq:energy-r-continuous}, \eqref{Diophantine1}
and \eqref{eq:commutator-tame-continuous} and dividing by
$\norm[H^r]{D_Pw_\epsilon}$ gives
\begin{equation}\label{X2}
  \begin{aligned}
  \norm[H^r]{D_Pw_\epsilon}
  \le{}&
  C_r\norm[H^{r+\tau}]{f} \\
  &+C'_r
  \left(\norm[H^{s_*+1}]{E}\norm[H^{r-1}]{D_Pw_\epsilon}
    +\norm[H^r]{E}\norm[H^{s_*}]{D_Pw_\epsilon}
    \right)
\end{aligned}
\end{equation}
Taking first $r=s_*$ and using
\[ \norm[H^{s_*-1}]{D_Pw_\epsilon}\leq \norm[H^{s_*}]{D_Pw_\epsilon} \]
and
\[
  \norm[H^{s_*}]{E}\le \norm[H^{s_*+1}]{E}<\varepsilon_*,
\]
we may shrink $\varepsilon_*$ so that the term containing
$\norm[H^{s_*+1}]{E}\norm[H^{s_*}]{D_Pw_\epsilon}$ is absorbed into the
left-hand side.  We obtain the uniform baseline estimate:
\begin{equation}
  \norm[H^{s_*}]{D_Pw_\epsilon}
  \le
  C\,\norm[H^{s_*+\tau}]{f}.
\label{eq:baseline-DPw}
\end{equation}
For arbitrary $r> s_*$, we cannot simply choose one neighborhood $\varepsilon_*$ such that $C_r'\varepsilon_*$ is uniformly small for all $r$, since $C_r'$ can grow with $r$. Instead, we use \cref{lem:sobolev-interpolation}. When $r=s_*+1$, the same estimate is immediate because $r-1=s_*$. For all $\delta>0$ there exists $C_{r,\delta}>0$ such that
\begin{equation}
  \norm[H^{r-1}]{D_Pw_\epsilon}\leq \delta \norm[H^r]{D_Pw_\epsilon}+C_{r,\delta}\norm[H^{s_*}]{D_Pw_\epsilon}.
\end{equation}
Substituting this interpolation estimate into $\eqref{X2}$ gives
\begin{equation}\label{X3}
  \begin{aligned}
  \norm[H^r]{D_Pw_\epsilon}
  \le{}&
  C_r\norm[H^{r+\tau}]{f} \\
  &+C'_r
  \left(\varepsilon_*\left( \delta \norm[H^r]{D_Pw_\epsilon}+C_{r,\delta}\norm[H^{s_*}]{D_Pw_\epsilon}\right)
    +\norm[H^r]{E}\norm[H^{s_*}]{D_Pw_\epsilon}
    \right)
\end{aligned}
\end{equation}
We can then choose $\delta=\delta_r$ small such that $C_r'\varepsilon_*\delta\norm[H^r]{D_Pw_\epsilon}$ can be absorbed into the left side. It follows that
\begin{equation}
  \norm[H^r]{D_Pw_\epsilon}
  \le
  C_r\left(
    \norm[H^{r+\tau}]{f}
    +(1+\norm[H^r]{E})\norm[H^{s_*+\tau}]{f}
  \right),
  \qquad r\ge s_*.
\label{eq:DPw-tame}
\end{equation}

The estimate \eqref{eq:DPw-tame} controls the leafwise derivatives of
$w_\epsilon$.  To recover the full ambient Sobolev norm, we use the
Diophantine Poincar\'e inequality once more:
\[
  \norm[H^s]{w_\epsilon}
  \le
  C\gamma^{-1}
  \norm[H^{s+\tau}]{D_Pw_\epsilon}.
\]
If $s+\tau\ge s_*$, apply \eqref{eq:DPw-tame} with $r=s+\tau$. If
$s+\tau<s_*$, we use \eqref{eq:baseline-DPw} and the
monotonicity of Sobolev norms.  In both cases,
\begin{equation}
  \norm[H^s]{w_\epsilon}
  \le
  C_s\left[
    \norm[H^{s+2\tau}]{f}
    +\bigl(1+\norm[H^{s+\tau}]{E}\bigr)
      \norm[H^{s_*+\tau}]{f}
  \right].
\label{eq:w-epsilon-tame}
\end{equation}

The bounds above are uniform as $\epsilon\downarrow0$. Therefore, a diagonal weak
compactness argument gives a sequence $\epsilon_\nu\downarrow0$
and a function $w\in C^\infty_0(\T^N;Q)$ such that
$w_{\epsilon_\nu}\rightharpoonup w$ in every Sobolev space.  For every smooth test field $\varphi$, the regularizing term satisfies
\[
  \epsilon_\nu\int_{\T^N}
  \langle\nabla w_{\epsilon_\nu},\nabla\varphi\rangle\,dz\longrightarrow0,
\]
because $w_{\epsilon_\nu}$ is uniformly bounded in $H^1$. Passing to the
weak formulation gives
\[
  \mathcal{L}_Cw=f.
\]
Weak lower semicontinuity shows that $w$ satisfies the same tame estimate.

It remains only to prove uniqueness.  If $\mathcal{L}_Cw=0$ and
$\int_{\T^N}w=0$, then by block symmetry and coercivity,
\[
  0
  =
  \int_{\T^N} w^T\mathcal{L}_Cw
  =
  \int_{\T^N}
  \sum_{i,j=1}^m(D_iw)^TC^{ij}D_jw
  \ge
  c\norm[L^2]{D_Pw}^2.
\]
Hence $D_Pw=0$.  By \cref{lem:poincare} and the zero-average condition,
$w=0$.  Thus $\mathcal{L}_C$ is bijective on $C^\infty_0(\T^N;Q)$, and the
preceding estimate proves that, for fixed $C\in\mathfrak C_{\varepsilon_*}$,
\[
  f\longmapsto \mathcal{L}_C^{-1}f
\]
is a tame linear map.
\end{proof}

\subsection{The solution map is smooth tame}
\label{sec:smooth-coefficient-dependence}

\begin{Prop}
\label{prop:smooth-inverse}
The solution map
\[
  S:\mathfrak C_{\varepsilon_*}\times
  C^\infty_0(\T^N;Q)
  \longrightarrow C^\infty_0(\T^N;Q),
  \qquad
  S(C,f)=\mathcal{L}_C^{-1}f,
\]
is smooth tame.
\end{Prop}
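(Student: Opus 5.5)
We apply Hamilton's smooth tame inverse-family theorem, \cref{thm:hamilton-inverse-family}, with $F=\mathcal E_{\mathrm{sym}}$ (a closed subspace of a tame space of smooth matrix fields, hence tame), $U=\mathfrak C_{\varepsilon_*}$, and $H=K=C^\infty_0(\T^N;Q)$. The open set $U$ is defined by a single finite Sobolev norm, so it is open in the Fr\'echet topology.

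\emph{The family $L$.} We set $L(C)w=\mathcal L_Cw$. This is well defined as a map into $C^\infty_0(\T^N;Q)$, since $\mathcal L_Cw$ is a sum of leafwise derivatives $D_i(\cdot)$ of periodic fields and therefore has zero average. The map $(C,w)\mapsto\mathcal L_Cw$ is bilinear: it is a product of $C^{ij}$ with $D_jw$, followed by a derivative. By the tame product estimate \eqref{eq:tame-product} it satisfies
\[
\norm[H^n]{\mathcal L_Cw}\le C_n\bigl(\norm[H^{n+1}]{C}\norm[H^{s_0+1}]{w}+\norm[H^{s_0+1}]{C}\norm[H^{n+2}]{w}\bigr).
\]
A continuous bilinear tame map is smooth tame, because its derivatives are itself, its partial maps, and zero. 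So $L$ is smooth tame and linear in $w$.

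\emph{Unique solvability and tameness of $V=S$.} \cref{prop:tame-inverse} gives that for each $C\in U$ and each $f$ the equation $\mathcal L_Cw=f$ has a unique solution $S(C,f)$. It also gives the estimate \eqref{eq:w-tame}. This estimate is a tame estimate for $S$ jointly in $(C,f)$ on the neighborhood $U$: the degree is $2\tau$ and the base is $s_*+\tau$. This is because $\norm[H^{s_*+\tau}]{f}$ is a fixed low norm, and the constants are uniform on $U$. The joint form $(1+\norm[H^{n+\tau}]{C-C_0})\norm[H^{s_*+\tau}]{f}+\norm[H^{n+2\tau}]{f}$ is exactly what Hamilton's definition requires locally, after restricting to a neighborhood where the low norm of $f$ is bounded.

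\emph{Continuity of $S$.} This is the step I expect to be the main obstacle, since Hamilton's theorem demands continuity and \cref{prop:tame-inverse} gives only bounds. I would prove it with the resolvent identity. Take $(C,f)$ and $(C',f')$ in $U\times C^\infty_0$, and write $w=S(C,f)$ and $w'=S(C',f')$. Then
\[
\mathcal L_{C}(w'-w)=f'-f-\mathcal L_{C'-C}w'=f'-f+\sum_{i}D_i\Bigl(\sum_j (C'^{ij}-C^{ij})D_jw'\Bigr),
\]
and the right side has zero average. Applying \eqref{eq:w-tame} for the fixed coefficient $C$ gives the following bound for each $s$:
\[
\norm[H^s]{w'-w}\lesssim_{C,s}\norm[H^{s+2\tau+1}]{f'-f}+\norm[H^{s+2\tau+1}]{(C'-C)D_Pw'}.
\]
The tame product estimate bounds the second term by norms of $C'-C$ times norms of $w'$. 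The norms of $w'$ are in turn bounded, via \eqref{eq:w-tame} uniformly on $U$, in terms of $f'$ and $C'$. These stay bounded as $(C',f')\to(C,f)$ in the Fr\'echet topology. Hence $w'\to w$ in every $H^s$, and $S$ is continuous on $U\times C^\infty_0$.

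With these three ingredients in place, \cref{thm:hamilton-inverse-family} yields that $S$ is smooth tame. If one wants the derivative explicitly, it is given by the formula $DS(C,f)[\dot C,\dot f]=S(C,\dot f-\mathcal L_{\dot C}S(C,f))$, which follows from the same identity.
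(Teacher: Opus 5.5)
Your proposal is correct and follows the paper's proof essentially step for step: smooth tameness of $(C,w)\mapsto\mathcal L_Cw$ from the product estimate, tameness of $S$ from \eqref{eq:w-tame}, continuity via the same difference identity $\mathcal L_C(w'-w)=f'-f+\sum_iD_i\bigl(\sum_j(C'-C)^{ij}D_jw'\bigr)$ with the uniform bounds on $w'$, and then \cref{thm:hamilton-inverse-family}. One small correction: in Hamilton's framework a closed subspace of a tame space need not be tame, but $\mathcal E_{\mathrm{sym}}$ is the image of the pointwise tame projection $C^{ij}\mapsto\frac12\bigl(C^{ij}+(C^{ji})^T\bigr)$, so it is a tame direct summand and therefore tame, which is the property you actually need.
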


\begin{proof}
The map
\[
  (C,w)\longmapsto \mathcal{L}_Cw
  =-\sum_{i=1}^mD_i\!\left(
    \sum_{j=1}^mC^{ij}D_jw
  \right)
\]
is smooth tame and linear in $w$ by the tame product estimate
\eqref{eq:tame-product}.  \Cref{prop:tame-inverse} gives
unique solvability of $\mathcal{L}_Cw=f$ for every
$C\in\mathfrak C_{\varepsilon_*}$, and the estimate
\eqref{eq:w-tame} shows that the solution map $S(C,f)=\mathcal{L}_C^{-1}f$
is tame.

It remains to prove continuity.  Let
\[
  w=S(C,f),
  \qquad
  w'=S(C',f').
\]
Subtracting the two equations gives
\[
  \mathcal{L}_C(w'-w)
  =f'-f+\sum_{i=1}^mD_i\!\left(
    \sum_{j=1}^m(C'-C)^{ij}D_jw'
  \right).
\]
As $(C',f')\to(C,f)$ in the Fr\'echet topology, the tame product
estimate and the local bounds for $w'$ supplied by \eqref{eq:w-tame}
show that the right-hand side tends to zero in every Sobolev norm.
Applying \cref{prop:tame-inverse} to $w'-w$ therefore gives
\[
  \norm[H^s]{w'-w}\longrightarrow0
  \qquad\text{for every }s.
\]
Thus $S$ is continuous and tame.  Hamilton's smooth tame
inverse-family theorem, \cref{thm:hamilton-inverse-family}, now implies
that $S$ is smooth tame.
\end{proof}

\section{Construction of the approximate right inverse and the quadratic error}
\label{sec:approx}
We now return to the original geometric setting.  \cref{sec:linearization} produced the coefficient
$C=C(u,g)$ and proved that it depends smoothly and tamely on $(u,g)$, while
\cref{sec:inverse-main} established smooth tame inversion for the corresponding abstract
operator $\mathcal{L}_C$.  We combine these two ingredients to construct the maps $V$
and $\mathscr Q$ required by Hamilton's theorem. The derivative of the fixed-range map is
\begin{equation}
  D_u\cP(u,g)[v]=\Pi_0D_u\cH(u,g)[v].
\label{eq:DP}
\end{equation}

Fix $0<\varepsilon_0<\varepsilon_*$.  By
\cref{lem:coefficient-tame} and Sobolev embedding, we may choose an integer
$r_0$ sufficiently large and then $\rho>0$ sufficiently small so that the set
\begin{equation}
 \mathcal O=
 \left\{(u,g)\in X\times\cM^\infty:
 \norm[H^{r_0}]{u}+\norm[H^{r_0}]{g-g_0}<\rho\right\}
\label{eq:admissible-neighborhood}
\end{equation}
has the following properties: $\Phi_u$ is a diffeomorphism, $J=I_Q+\partial_Qu$
is pointwise invertible, $g$ is uniformly positive definite, and
\begin{equation}
 \norm[H^{s_*+1}]{C(u,g)-C_0}\le \varepsilon_0.
\label{eq:section7-coefficient-bound}
\end{equation}
All constructions in this section take place on $\mathcal O$.

For $h\in Y$, define
\begin{equation}
\begin{gathered}
  b_h=h-\int_{\T^N}J^Th\,dz,\\
  w_h=S\bigl(C(u,g),J^Tb_h\bigr),
  \qquad
  \psi_h=w_h-\int_{\T^N}Jw_h\,dz.
\end{gathered}
\label{eq:psi-explicit}
\end{equation}
Because $\int J\,dz=I_Q$ and $\int h\,dz=0$, we have
\begin{equation}
  \Pi_0b_h=h,
  \qquad
  \int_{\T^N}J^Tb_h\,dz=0.
\label{eq:bh-properties}
\end{equation}
Thus $w_h$ is well defined and
\begin{equation}
  \mathcal{L}_{C(u,g)}\psi_h=J^Tb_h,
  \qquad
  \int_{\T^N}J\psi_h\,dz=0.
\label{eq:psi-equations}
\end{equation}
Define
\begin{equation}
  V(u,g)h:=J\psi_h.
\label{eq:V}
\end{equation}
Then $V(u,g)h\in X$. The formulas above, together with
\cref{prop:smooth-inverse}, show directly that $(u,g,h)\mapsto V(u,g)h$ is
smooth tame and linear in $h$.

\begin{Prop}
\label{prop:approx-inverse}
On the open neighborhood $\mathcal O$ defined in
\eqref{eq:admissible-neighborhood}, there exist smooth tame maps
\[
 V:\mathcal O\times Y\longrightarrow X,
 \qquad
 \mathscr Q:\mathcal O\times Y\times Y\longrightarrow Y,
\]
linear in the last variable of $V$ and bilinear in the last two variables of
$\mathscr Q$, such that
\begin{equation}
 D_u\cP(u,g)V(u,g)h
 =h+\mathscr Q(u,g)\{\cP(u,g),h\}.
\label{eq:quadratic-identity}
\end{equation}
In particular, whenever $\cP(u,g)=0$, $V(u,g)$ is an exact right inverse
of $D_u\cP(u,g)$.
\end{Prop}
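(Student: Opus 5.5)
We take $V$ as already defined in \eqref{eq:V}; its smooth tameness and linearity in $h$ follow from \cref{prop:smooth-inverse}, \cref{lem:coefficient-tame}, and the tame product estimate. The plan is to compute $D_u\cP(u,g)V(u,g)h$ exactly and identify every term other than $h$ as a bilinear expression in $(\cP(u,g),h)$.

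\textbf{Step 1: express $D_u\cH[J\psi_h]$.} Apply the reduced ground-state identity \eqref{eq:reduced-ground-state} with $w=\psi_h$ and multiply by $J^{-T}$. Since $\mathcal L_{C(u,g)}\psi_h=J^Tb_h$ by \eqref{eq:psi-equations}, this gives
\[
  D_u\cH(u,g)[J\psi_h]
  =b_h+\sum_{a=1}^q(\partial_a\cH(u,g))\psi_h^a
  -J^{-T}\sum_{j=1}^m\bigl(T_u(\cP(u,g))\bigr)^jD_j\psi_h .
\]
Because $\cH-\cP$ is spatially constant, $\partial_a\cH=\partial_a\cP$. Both correction terms are therefore bilinear in $(\cP(u,g),\psi_h)$, and $\psi_h$ is linear in $h$.

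\textbf{Step 2: apply $\Pi_0$.} Using $\Pi_0b_h=h$ from \eqref{eq:bh-properties}, we obtain
\[
  \mathscr Q(u,g)\{r,h\}
  :=\Pi_0\Bigl[\sum_{a=1}^q(\partial_ar)\psi_h^a
  -J^{-T}\sum_{j=1}^m\bigl(T_u(r)\bigr)^jD_j\psi_h\Bigr],
\]
which is bilinear in $(r,h)$. With this definition \eqref{eq:quadratic-identity} holds.

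\textbf{Step 3: smooth tameness of $\mathscr Q$.} This follows by composition. The map $(u,g,h)\mapsto\psi_h$ is smooth tame, $T$ is smooth tame by \cref{lem:Kparallel}, and $J^{-1}$ is a smooth function of $\partial_Qu$ on $\mathcal O$, hence smooth tame by \cref{lem:finite-substitution}. The remaining operations are products, controlled by \eqref{eq:tame-product}, and the fixed tame maps $\Pi_0$ and $D_j$. The range lies in $Y$ because of $\Pi_0$.

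\textbf{Main point to check.} The only genuinely delicate part is that nothing in $D_u\cP\,V h-h$ survives at $\cP=0$. The $\cH$-dependent zero-order term cancels exactly against the subtraction built into the ground-state identity, and the $K_{\df}$ part is already absorbed into $C$ through $\Theta$. The zero-average normalizations also need checking: $J^Tb_h$ has zero average, which is what makes $S$ applicable; and $\psi_h$ differs from $w_h$ only by a constant, so it satisfies the same equation, since $\mathcal L_C$ kills constants, and $V h\in X$ by construction.
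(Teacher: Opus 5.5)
Your proposal is correct and follows the paper's proof essentially step for step. You apply the reduced ground-state identity to $\psi_h$ using $\mathcal L_{C(u,g)}\psi_h=J^Tb_h$, multiply by $(J^T)^{-1}$, replace $\partial_a\cH$ by $\partial_a\cP$ and $K_\parallel$ by $T_u(\cP(u,g))$, and apply $\Pi_0$ with $\Pi_0b_h=h$; this gives the same bilinear $\mathscr Q$, and its smooth tameness follows by composition with $S$, $C(u,g)$, $T_u$, and matrix inversion, exactly as in the paper.
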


\begin{proof}
Let $h\in Y$ and define $\psi_h,b_h$ by \eqref{eq:psi-explicit}.  Apply the ground-state identity
\eqref{eq:ground-state} to $\psi_h$.  After absorbing the divergence-free
part of $K$ into the coefficient $C$, equations \eqref{eq:psi-equations}
and \eqref{eq:Cdef} give
\begin{equation}
 J^T\Bigl(
 D_u\cH(u,g)[J\psi_h]
 -\sum_{b=1}^q(\partial_b\cH(u,g))\psi_h^b
 \Bigr)
 =J^Tb_h-\sum_{j=1}^mK_{\parallel}^jD_j\psi_h.
\label{eq:ground-applied}
\end{equation}
Multiplication by $(J^{T})^{-1}$ yields
\begin{equation}
 D_u\cH(u,g)[V(u,g)h]
 =b_h+\sum_{b=1}^q(\partial_b\cH(u,g))\psi_h^b
 -(J^{T})^{-1}\sum_{j=1}^mK_{\parallel}^jD_j\psi_h.
\label{eq:DE-V}
\end{equation}
By \cref{lem:fixed-target}, $\cH(u,g)-\cP(u,g)$ is spatially constant, and hence
\[
 \partial_b\cH(u,g)=\partial_b\cP(u,g).
\]
By \cref{lem:Kparallel},
\[
 K_{\parallel}=T_u(\cP(u,g)).
\]
Apply $\Pi_0$ to \eqref{eq:DE-V}, use \eqref{eq:DP} and
$\Pi_0b_h=h$, and define
\begin{equation}
 \mathscr Q(u,g)\{r,h\}
 =\Pi_0\left[
 \sum_{b=1}^q(\partial_br)\psi_h^b
 -(J^{T})^{-1}\sum_{j=1}^m(T_u(r))^jD_j\psi_h
 \right].
\label{eq:Qformula}
\end{equation}
This proves \eqref{eq:quadratic-identity}.  The map
$h\mapsto\psi_h$ is linear, and $r\mapsto T_u(r)$ is linear, so
\eqref{eq:Qformula} is bilinear in $(r,h)$.

The map $(u,g,h)\mapsto\psi_h$ is smooth tame and linear in $h$.
Matrix inversion
$J\mapsto (J^{T})^{-1}$, the residual operator $(u,r)\mapsto T_u(r)$,
differentiation, multiplication, and $\Pi_0$ are smooth tame operations.
Composing these maps in \eqref{eq:Qformula} proves that $\mathscr Q$ is smooth
tame.
\end{proof}

At an exact solution, the linearization is an isomorphism. 
\begin{Cor}
\label{cor:linearized-isomorphism}
Let $(u,g)\in\mathcal O$ satisfy
\[
  \cP(u,g)=0.
\]
Then
\[
  D_u\cP(u,g):X\longrightarrow Y
\]
is an isomorphism, and its inverse is $V(u,g)$.
\end{Cor}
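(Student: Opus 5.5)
Since $\cP(u,g)=0$, \cref{prop:approx-inverse} gives $D_u\cP(u,g)V(u,g)h=h$ for every $h\in Y$. So $V(u,g)$ is a right inverse and $D_u\cP(u,g)$ is surjective. What remains is injectivity of $D_u\cP(u,g)$ on $X$. Once that holds, $V(u,g)$ is also a left inverse: for $v\in X$, the difference $v-V(u,g)D_u\cP(u,g)v$ lies in $X$ and is annihilated by $D_u\cP(u,g)$, hence vanishes. Thus the whole proof reduces to showing that $D_u\cP(u,g)v=0$ with $v\in X$ forces $v=0$.

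For injectivity, let $v\in X$ with $D_u\cP(u,g)v=0$. By \eqref{eq:DP} this means $D_u\cH(u,g)[v]=c$ for some constant $c\in Q$. Since $J$ is pointwise invertible on $\mathcal O$, write $v=Jw$ with $w=J^{-1}v$ smooth. The solution hypothesis gives $\cH(u,g)=0$, so $\partial_a\cH(u,g)=0$ and $\cP(u,g)=0$. Hence $K_\parallel=T_u(0)=0$, and the reduced identity \eqref{eq:reduced-ground-state} becomes
\[
  \mathcal L_{C(u,g)}w=J^Tc .
\]
The left side has zero average, being a sum of $D_i$-derivatives. Integrating and using \eqref{eq:meanJ} gives $0=\int J^Tc\,dz=c$. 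Therefore $\mathcal L_Cw=0$.

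Now split $w=\bar w+w_0$ with $\bar w=\int w\,dz$ constant and $w_0\in C^\infty_0$. Constants are killed by $\mathcal L_C$, so $\mathcal L_Cw_0=0$. Since $(u,g)\in\mathcal O$ gives $C\in\mathfrak C_{\varepsilon_*}$ by \eqref{eq:section7-coefficient-bound}, the uniqueness part of \cref{prop:tame-inverse} yields $w_0=0$. Hence $v=J\bar w$ is a relabelling Jacobi field. Finally, $v\in X$ has zero average, so \eqref{eq:meanJ} gives $0=\int J\bar w\,dz=\bar w$, and thus $v=0$.

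The main point to check is that the substitution $v=Jw$ and the reduced identity apply to an arbitrary $v\in X$, without assuming any normalization on $w$. This holds because \eqref{eq:reduced-ground-state} is valid for every smooth $w$, and the average of $w$ is handled separately through $\int J\,dz=I_Q$. A further check is that the kernel of $\mathcal L_C$ on all smooth fields is exactly the constants. This follows from the decomposition into the constant part and the $C^\infty_0$ part above. Alternatively, it follows from the energy argument in the uniqueness part of \cref{prop:tame-inverse}, which gives $D_Pw=0$, combined with \cref{lem:poincare}.
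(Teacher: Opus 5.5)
Your proof is correct and follows the paper's argument. Surjectivity comes from the exact right inverse, and injectivity comes from the substitution $v=Jw$, the reduced identity at a solution, uniqueness for $\mathcal L_{C}$ on zero-average fields, and $\int J\,dz=I_Q$ to remove the constant part. The one difference is how you show $c=0$: you integrate the divergence-form identity $\mathcal L_{C}w=J^Tc$, whereas the paper differentiates the Noether identity \eqref{eq:noether} in the direction $v$ (the term with the derivative of $J$ drops because $\cH(u,g)=0$). Both arguments are valid.
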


\begin{proof}
By \cref{prop:approx-inverse},
\[
  D_u\cP(u,g)V(u,g)=I_Y,
\]
so $D_u\cP(u,g)$ is surjective. It remains to prove injectivity.

Let $v\in X$ satisfy $D_u\cP(u,g)[v]=0$.
Since
\[
  D_u\cP(u,g)[v]
  =\Pi_0D_u\cH(u,g)[v],
\]
there is a constant vector $c\in Q$ such that
\[
  D_u\cH(u,g)[v]=c.
\]
Because $\cP(u,g)=0$, \cref{lem:fixed-target} gives
\[
  \cH(u,g)=0.
\]
Differentiating the identity \eqref{eq:noether} in the direction
$v$, the term obtained by differentiating $J$ vanishes, hence
\[
  0
  =\int_{\T^N}J^TD_u\cH(u,g)[v] \,dz
  =\left(\int_{\T^N}J^T\,dz\right)c
  =c.
\]
Here we have used \eqref{eq:meanJ}. Therefore
\[
  D_u\cH(u,g)[v]=0.
\]

Set
\[
  w=J^{-1}v, \quad w_0=\Pi_0 w.
\]
Since $\cP(u,g)=\cH(u,g)=0$, the reduced identity
\eqref{eq:reduced-ground-state} gives
\[
  \mathcal{L}_{C(u,g)}w=J^TD_u\cH(u,g)[v]=0.
\]
The operator $\mathcal{L}_{C(u,g)}$ annihilates constants, so
\[
  \mathcal{L}_{C(u,g)}w_0=0.
\]
Because $(u,g)\in\mathcal O$, the coefficient $C(u,g)$ belongs to
$\mathfrak C_{\varepsilon_*}$. Therefore, the uniqueness statement in
\cref{prop:tame-inverse} yields
\[
  w_0=0.
\]
Thus $w=a$ is constant and
\[
  v=Ja.
\]
Since $v\in X$, its average vanishes, and \eqref{eq:meanJ} gives
\[
  0
  =\int_{\T^N}v\,dz
  =\left(\int_{\T^N}J\,dz\right)a
  =a.
\]
Hence $v=0$, proving injectivity. Since $V(u,g)$ is already a right
inverse, it is the inverse of $D_u\cP(u,g)$.
\end{proof}

\section{Proof of the main theorem}
\label{sec:nashmoser}
We now apply \cref{thm:hamilton} to the Euler-Lagrange map constructed
earlier.

\begin{proof}[Proof of \cref{thm:main}]
Use the tame Fr\'echet spaces
\[
  F=X=C^\infty_0(\T^N;Q),
  \qquad
  H=Y=C^\infty_0(\T^N;Q),
\]
and take the parameter space $G$ to be the space of smooth symmetric
two-tensors on $\T^N$, with $g$ belonging to an open subset
$\cM^\infty$ near $g_0$. Define
\begin{equation}
  A(u,g)=\cP(u,g)=\Pi_0\cH(u,g).
\label{eq:A-Hamilton}
\end{equation}
By \cref{lem:H-smooth-tame}, the Euler-Lagrange map $\cH$ is smooth tame. Since the projection $\Pi_0$ is tame linear, it follows that $A$ is smooth tame.

At the base point, the affine Euclidean leaves are minimal, so
\[
  A(0,g_0)=\cP(0,g_0)=0.
\]
Choose $s_*>N/2+2$ and shrink an open neighborhood of $(0,g_0)$ in one fixed
high Sobolev norm if necessary so that:
\begin{enumerate}[label=\textup{(\roman*)}]
\item $\Phi_u$ is a diffeomorphism and $J$ is pointwise invertible;
\item the metric remains positive definite;
\item \cref{lem:coefficient-tame} places $C(u,g)$ in the coefficient
  neighborhood of \cref{prop:tame-inverse}.
\end{enumerate}
This neighborhood is open in the $C^\infty$ Fr\'echet topology because a fixed
Sobolev seminorm is continuous.

\Cref{prop:smooth-inverse,prop:approx-inverse} provide
smooth tame maps $V$ and $\mathscr Q$ satisfying exactly
\eqref{eq:Hamilton-hyp}.  Therefore, \cref{thm:hamilton} produces a smooth tame
map $g\mapsto u(g)\in X$, with $u(g_0)=0$, such that
\begin{equation}
  \cP(u(g),g)=0.
\label{eq:Pzero-final}
\end{equation}
It follows from \cref{lem:fixed-target} that \eqref{eq:Pzero-final} is equivalent to
$\cH(u(g),g)=0$.  Shrinking the metric neighborhood once more ensures that
$\norm[C^1]{u(g)}$ is small, so $\Phi_{u(g)}$ is an equivariant diffeomorphism.
By \cref{prop:H-mean-curvature}, every leaf
$\Phi_{u(g)}(P+y)$ is an embedded minimal submanifold of $(\R^N,g)$.
Equivariance under $\Z^N$ makes the foliation $\Z^N$-invariant, so it
descends to the quotient torus. Since the affine leaves are dense by
\cref{rem:dense-leaves} and the descended map is a diffeomorphism, every
quotient leaf is dense.

\medskip
\noindent\emph{Local uniqueness.}
Apply \cref{lem:homogeneous-tame-families} to the linear family
$V(u,g)y$ and the bilinear family $D_u^2\cP(u,g)[h_1,h_2]$, using
$(u,g-g_0)$ as the parameter with the product Sobolev grading.
After increasing the bases and losses, choose a common base $b_0\ge1$
and loss $r\ge0$ on a fixed neighborhood of $(0,g_0)$, and set
\[
  \ell=b_0+r,
  \qquad
  b=\ell+r.
\]
Applying \eqref{eq:homogeneous-linear} at output order $n=b_0$ gives
\[
\begin{aligned}
  \norm[L^2]{V(u,g)y}
  &\le \norm[H^{b_0}]{V(u,g)y}\\
  &\le C\bigl(2+\norm[H^\ell]{u}
                  +\norm[H^\ell]{g-g_0}\bigr)
          \norm[H^\ell]{y}.
\end{aligned}
\]
Applying \eqref{eq:homogeneous-bilinear} at output order $n=\ell$
and using $b\ge\ell$ similarly yields
\[
\begin{aligned}
  &\norm[H^\ell]{D_u^2\cP(u',g)[h,h]}\\
  &\quad\le C\Bigl[
     \bigl(1+\norm[H^b]{u'}+\norm[H^b]{g-g_0}\bigr)
       \norm[H^\ell]{h}^2
     +2\norm[H^b]{h}\norm[H^\ell]{h}\Bigr]\\
  &\quad\le C\bigl(3+\norm[H^b]{u'}+\norm[H^b]{g-g_0}\bigr)
       \norm[H^b]{h}^2.
\end{aligned}
\]
Increase $r_0$ in \eqref{eq:admissible-neighborhood} so that $r_0\ge b$
and shrink $\rho$ if necessary, so that $\mathcal O$ lies in the common neighborhood
of validity of \cref{lem:homogeneous-tame-families}. The parameter norms appearing in these two fixed-order
estimates are then bounded by $\rho$. Thus, for a uniform constant $C$:
\[
  \norm[L^2]{V(u,g)y}\le C\norm[H^\ell]{y},
  \qquad
  \norm[H^\ell]{D_u^2\cP(u',g)[h,h]}
  \le C\norm[H^b]{h}^2
\]
for all $(u,g),(u',g)\in\mathcal O$, $y\in Y$, and $h\in X$.
Shrink $\cG$ so that, for every $g\in\cG$ we have
\[
  \norm[H^{r_0}]{u(g)}+\norm[H^{r_0}]{g-g_0}<\frac{\rho}{2},
\]
and set $K_\mathcal{G}=\max\{2b,r_0\}$.
Fix $g\in\cG$, write $u=u(g)$, and let $\widetilde u\in X$ determine
another minimal graph foliation for the same metric. Put
$h=\widetilde u-u$. Choose $0<\eta\le\rho/2$, to be decreased below,
and suppose that $\norm[H^{K_\mathcal{G}}]{h}<\eta$.
Since $K_\mathcal{G}\ge r_0$, and $\eta<\rho/2$, it follows that
$(u+th,g)\in\mathcal O$ for every $0\le t\le1$.
By \cref{prop:H-mean-curvature,lem:fixed-target},
$\cP(u+h,g)=\cP(u,g)=0$, so Taylor's formula gives
\[
  0=D_u\cP(u,g)[h]
    +\int_0^1(1-t)D_u^2\cP(u+th,g)[h,h]\,dt.
\]
At the exact solution, \cref{cor:linearized-isomorphism} gives the
left-inverse identity $V(u,g)D_u\cP(u,g)=I_X$. Hence
\[
  h=-V(u,g)\int_0^1(1-t)D_u^2\cP(u+th,g)[h,h]\,dt,
\]
and the fixed-order estimates imply
\begin{equation}
\begin{aligned}
  \norm[L^2]{h}
  &\le C\int_0^1(1-t)
      \norm[H^\ell]{D_u^2\cP(u+th,g)[h,h]}\,dt\\
  &\le C\norm[H^b]{h}^2.
\end{aligned}
\label{eq:uniqueness-basic}
\end{equation}

As in Moser's uniqueness argument \cite[\S5]{Moser1988}, applying
\cref{lem:sobolev-interpolation} between orders $0$ and $2b$, we have
\[
  \norm[H^b]{h}^2
  \le \norm[L^2]{h}\norm[H^{2b}]{h}
  \le \norm[L^2]{h}\norm[H^{K_\mathcal{G}}]{h}.
\]
Combining this with \eqref{eq:uniqueness-basic} gives
\[
  \norm[L^2]{h}
  \le C\norm[L^2]{h}\norm[H^{K_\mathcal{G}}]{h}
  \le C\eta\norm[L^2]{h}.
\]
Choose $\eta$ smaller if necessary so that $C\eta<1$.
Then $h=0$, and consequently $\widetilde u=u(g)$.
The choices of $K_\mathcal{G}$ and $\eta$ are uniform for $g\in\cG$.

The final metric neighborhood $\cG$ is open in the Fr\'echet topology.
Therefore, it contains a basic neighborhood determined by finitely many
Sobolev seminorms. Since the Sobolev grading is increasing, these
conditions are implied by $\norm[H^{r_0}]{g-g_0}<\varepsilon$ for some
$\varepsilon>0$, after increasing the index $r_0$ in the statement of
\cref{thm:main} if necessary. This completes the proof.
\end{proof}

\section{A variational extension to elliptic integrands}
\label{sec:general-integrands}

We briefly discuss to what extent the stability results cover a wider class of periodic graph integrands. Recall that the proof of the main theorem uses only the variational Hessian symmetry, the dependence on the
physical point $z+r$, and the coercivity of the reference Hessian in the slope
variable. Therefore, it extends to a controlled class of periodic graph
integrands.
Periodic elliptic integrands have also been studied from the global
minimizing viewpoint. In codimension one, Caffarelli and de la Llave
constructed plane-like minimizers and associated laminations
\cite{CaffarelliDeLaLlave2001}. The result below is instead a local smooth
persistence theorem in arbitrary codimension.

Fix $\rho>0$, let $B_\rho\subset\Hom(P,Q)$ be the ball of radius $\rho$, and
let
\[
  \mathscr F:\T^N\times\overline B_{2\rho}\longrightarrow\R
\]
be smooth up to the boundary. For
$z\in\T^N$, $r\in Q$, and $p\in B_\rho$, set
\begin{equation}
  L_{\mathscr F}(z,r,p)=\mathscr F(z+r,p).
\label{eq:general-L}
\end{equation}
For $y\in Q$ and a relatively compact domain $\Omega\Subset P+y$,
define the local graph functional
\[
  \mathcal A_{\mathscr F,\Omega}(u)
  :=
  \int_\Omega
  L_{\mathscr F}(z,u(z),D_Pu(z))\,d\mathcal H_0^m(z),
\]
where $u$ is a smooth $Q$-valued map defined near
$\overline\Omega$, with $D_Pu\in B_\rho$.
For every $h\in C_c^\infty(\Omega;Q)$, we have
\[
  \left.\frac{d}{dt}\right|_{t=0}
  \mathcal A_{\mathscr F,\Omega}(u+th)
  =
  \int_\Omega
  \langle\mathcal H_{\mathscr F}(u),h\rangle_0\,
  d\mathcal H_0^m,
\]

where the Euler-Lagrange map is
\begin{equation}
  \mathcal H_{\mathscr F}(u)_a
  =
  (L_{\mathscr F})_{r^a}(z,u,D_Pu)
  -
  \sum_{i=1}^m D_i
  \bigl((L_{\mathscr F})_{p_i^a}(z,u,D_Pu)\bigr).
  \label{eq:general-Euler}
\end{equation}

Set
\begin{equation}
  \mathcal P_{\mathscr F}(u)=\Pi_0\mathcal H_{\mathscr F}(u).
\label{eq:general-projected}
\end{equation}

Let the reference integrand $\mathscr F_0=\mathscr F_0(p)$ be independent of
the ambient point, and put
\[
  (A_0^{ij})_{ab}
  =\frac{\partial^2\mathscr F_0}
  {\partial p_i^a\partial p_j^b}(0).
\]
Assume the strong ellipticity condition
\begin{equation}
  \sum_{i,j=1}^m\zeta_i^TA_0^{ij}\zeta_j
  \ge\lambda_0\sum_{i=1}^m|\zeta_i|^2
  \qquad
  \text{for every }(\zeta_1,\ldots,\zeta_m)\in P^*\otimes Q
\label{eq:general-strong-ellipticity}
\end{equation}
for some $\lambda_0>0$. Equivalently,
$D_p^2\mathscr F_0(0)$ is positive definite on the full matrix space
$P^*\otimes Q$. This gives the pointwise coercivity used in the energy
estimate of \cref{sec:inverse}.

\begin{Cor}
\label{cor:general-integrand}
Assume that $P$ is Diophantine relative to $\Z^N$ and that
$\mathscr F_0$ satisfies \eqref{eq:general-strong-ellipticity}. After
possibly shrinking $\rho$, there is a $C^\infty$-open neighborhood
$\mathfrak I$ of $\mathscr F_0$ in the tame Fr\'echet space of functions
smooth up to the boundary of
$\T^N\times\overline B_{2\rho}$ and a smooth tame map
\[
  \mathfrak I\longrightarrow C^\infty_0(\T^N;Q),
  \qquad
  \mathscr F\longmapsto u(\mathscr F),
\]
with $u(\mathscr F_0)=0$, such that
\[
  \mathcal H_{\mathscr F}(u(\mathscr F))=0.
\]
For $\mathscr F$ sufficiently close to $\mathscr F_0$, the family
$\{\Phi_{u(\mathscr F)}(P+y):y\in Q\}$ is a $\Z^N$-periodic
foliation. For every $y\in Q$ and $\Omega\Subset P+y$,
the restriction $u(\mathscr F)|_\Omega$ is stationary for
$\mathcal A_{\mathscr F,\Omega}$ under compactly supported
$Q$-valued variations.
\end{Cor}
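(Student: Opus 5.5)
The plan is to rerun the proof of \cref{thm:main} with the integrand $\mathscr F$ in place of the metric $g$. The parameter space is the tame Fr\'echet space $G$ of functions smooth up to the boundary on $\T^N\times\overline B_{2\rho}$. The map is $A(u,\mathscr F)=\mathcal P_{\mathscr F}(u)$, defined on $\{u\in X:\norm[C^0]{D_Pu}<\rho\}\times G$. At the base point $A(0,\mathscr F_0)=0$, because $L_{\mathscr F_0}$ is independent of $(z,r)$ and $D_Pu=0$.

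Smooth tameness of $A$ follows from \cref{prop:composition-evaluation}\textup{(ii)}. Apply it to $\mathscr F$, $\partial_z\mathscr F$ and $\partial_p\mathscr F$, which are again smooth up to the boundary, then take one leafwise derivative and apply $\Pi_0$.

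Next I would verify that every structural step of \cref{sec:geometry,sec:linearization} used only three properties of $L_g$:
\begin{itemize}
\item the $(z,r)$-dependence enters only through $z+r$;
\item the Hessian symmetries \eqref{eq:ABRsym};
\item $\Z^N$-periodicity.
\end{itemize}
All three hold for $L_{\mathscr F}$. Hence the following carry over verbatim:
\begin{itemize}
\item the relabelling invariance $\mathcal A_{\mathscr F}(T_cu)=\mathcal A_{\mathscr F}(u)$, for the averaged functional $\int_{\T^N}L_{\mathscr F}$, which gives \cref{lem:noether,lem:fixed-target};
\item the equivariance \eqref{eq:Hequivariance};
\item the ground-state identity \cref{lem:ground-state};
\item the normal-curl and divergence identities \cref{lem:wronskian-div};
\item the decomposition $K=K_{\df}+T_u(\mathcal P_{\mathscr F}(u))$ together with \cref{lem:hodge}.
\end{itemize}
At $(0,\mathscr F_0)$ we have $J=I_Q$, $B^i=0$ (no $r$-dependence), $K=0$ and $\Theta=0$. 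So $C_0^{ij}=A_0^{ij}$, which is block-symmetric and, by \eqref{eq:general-strong-ellipticity}, satisfies \eqref{eq:C0-coercive} with $2\lambda=\lambda_0$. \Cref{prop:tame-inverse,prop:smooth-inverse} were stated for an arbitrary such $C_0$, so they apply unchanged. The analogue of \cref{lem:coefficient-tame} gives smooth tameness of $(u,\mathscr F)\mapsto C$, again via \cref{prop:composition-evaluation}\textup{(ii)}. With these in hand, \cref{prop:approx-inverse} produces $V$ and $\mathscr Q$ on a small neighborhood of $(0,\mathscr F_0)$. \Cref{thm:hamilton} then gives the smooth tame map $\mathscr F\mapsto u(\mathscr F)$ with $\mathcal P_{\mathscr F}(u(\mathscr F))=0$, and \cref{lem:fixed-target} upgrades this to $\mathcal H_{\mathscr F}(u(\mathscr F))=0$.

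For the geometric conclusions I would proceed as follows.
\begin{itemize}
\item Shrinking $\mathfrak I$ makes $\norm[C^1]{u(\mathscr F)}$ small, using Sobolev embedding and continuity of $u$ at $\mathscr F_0$.
\item This keeps $D_Pu$ in $B_\rho$, and by the argument of \cref{sec:euler} the map $\Phi_{u}$ becomes an equivariant diffeomorphism. Hence the leaves $\Phi_u(P+y)$ form a $\Z^N$-periodic foliation.
\item Stationarity on $\Omega\Subset P+y$ follows from the displayed local first-variation formula for $\mathcal A_{\mathscr F,\Omega}$, since its Euler--Lagrange expression is pointwise the same $\mathcal H_{\mathscr F}(u)$, which vanishes.
\end{itemize}
The phrase ``after possibly shrinking $\rho$'' is used so that the smallness of $D_Pu$ and the uniform positivity of $D_p^2\mathscr F$ hold on the working domain.

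The main obstacle is the bookkeeping showing that nothing specific to the area integrand was used. The delicate point is \eqref{eq:derivative-momentum}, together with the cancellation $L_{z_a}=L_{r^a}$ behind \eqref{eq:local-balance}. Both rest exactly on the form $\mathscr F(z+r,p)$, so I would check these two lines explicitly.

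A secondary technical point is tameness of evaluation in the slope variable. It requires the domain of $u$ to be the open set $\norm[C^0]{D_Pu}<\rho$, which is $C^\infty$-open. The Nash--Moser neighborhood must therefore be intersected with this set; this is harmless, since it is defined by a fixed Sobolev norm through embedding.
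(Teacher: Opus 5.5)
Your proposal is correct and follows essentially the same route as the paper. The paper's proof likewise uses the dependence through $z+r$ to get relabelling invariance, hence \eqref{eq:noether} and the fixed-range identity, and $(L_{\mathscr F})_{z_a}=(L_{\mathscr F})_{r^a}$ to carry over the ground-state, normal-curl and decomposition identities; it then identifies $C_0^{ij}=A_0^{ij}$ with coercivity from \eqref{eq:general-strong-ellipticity}, uses \cref{prop:composition-evaluation}\textup{(ii)} for joint smooth tameness, and concludes with \cref{thm:hamilton}, \cref{lem:fixed-target}, and a final $C^1$ shrinking, while your treatment of stationarity via the local first-variation formula and of the restriction $\norm[C^0]{D_Pu}<\rho$ only makes explicit details the paper leaves implicit.
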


\begin{proof}
The dependence on $z+r$ gives the same relabelling invariance as in
\cref{sec:gauge}. Hence the identity \eqref{eq:noether}, the projected map
\eqref{eq:general-projected}, and the fixed-range identity remain valid.
Moreover,
\[
  (L_{\mathscr F})_{z_a}=(L_{\mathscr F})_{r^a},
\]
so the matrix ground-state identity, the normal-curl formula, and the
decomposition in \cref{sec:linearization} carry over without change. Therefore, the
transformed equation is
\[
\begin{aligned}
  J^T\Bigl(
  D_u\mathcal H_{\mathscr F}(u)[Jw]
  -\sum_{a=1}^q(\partial_a\mathcal H_{\mathscr F}(u))w^a
  \Bigr)
  ={}&-\sum_{i=1}^mD_i\!\left(
       \sum_{j=1}^mC_{\mathscr F}^{ij}(u)D_jw
     \right)\\
  &-\sum_{j=1}^m
    \bigl(T_u(\mathcal P_{\mathscr F}(u))\bigr)^jD_jw.
\end{aligned}
\]
At the reference state,
$C_{\mathscr F_0}^{ij}(0)=A_0^{ij}$. The Hessian symmetry gives block
symmetry, and \eqref{eq:general-strong-ellipticity} gives the constant
coercivity required in \eqref{eq:C0-coercive}. Thus
\cref{prop:tame-inverse,prop:smooth-inverse} apply with
$C_0^{ij}=A_0^{ij}$. By \cref{prop:composition-evaluation}\textup{(ii)}, evaluation of
$\mathscr F$ and its finite derivatives along $(z+u,D_Pu)$ is jointly
smooth tame, with a fixed finite loss for each derivative order.
Thus the construction of
\cref{sec:approx} produces the same smooth tame maps $V$ and $\mathscr Q$.
Hamilton's theorem yields
\[
  \mathcal P_{\mathscr F}(u(\mathscr F))=0.
\]
The fixed-range identity then gives
$\mathcal H_{\mathscr F}(u(\mathscr F))=0$. A final $C^1$ shrinking makes
$\Phi_{u(\mathscr F)}$ an equivariant diffeomorphism.
\end{proof}

\appendix

\section{Sobolev interpolation and tame composition}
\label{sec:tame-preliminaries}
The embedding and product conventions are given in
\eqref{eq:sobolev-embedding}--\eqref{eq:tame-product}. Here we record
Sobolev-scale interpolation and the local homogeneous tame estimates,
prove \cref{prop:composition-evaluation}, and recall finite-dimensional
substitution. All estimates apply componentwise to vector-valued functions.

\begin{Lemma}[Sobolev interpolation]
\label{lem:sobolev-interpolation}
Let $a\le b$ and $0\le\theta\le1$ be real numbers and
$t=(1-\theta)a+\theta b$. For every function $f\in H^b$ we have:
\begin{equation}
  \norm[H^t]{f}
  \le \norm[H^a]{f}^{1-\theta}\norm[H^b]{f}^{\theta}.
\label{eq:sobolev-log-convexity}
\end{equation}
If $a<t<b$, then for every $\delta>0$, we have
\begin{equation}
  \norm[H^t]{f}
  \le \delta\norm[H^b]{f}
  +C_{a,t,b}\delta^{-(t-a)/(b-t)}\norm[H^a]{f}.
\label{eq:sobolev-interpolation-parameter}
\end{equation}
\end{Lemma}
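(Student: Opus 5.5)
The plan is to work entirely on the Fourier side, since the norm \eqref{eq:Sobolev-grading} is a weighted $\ell^2$ norm of the Fourier coefficients. Write $\mu(\xi)=\langle\xi\rangle^{2}\ge1$ and $c_\xi=|\widehat f(\xi)|^2$. Then
\[
  \norm[H^t]{f}^2=\sum_{\xi\in\Z^N}\mu(\xi)^{t}c_\xi .
\]
For \eqref{eq:sobolev-log-convexity}, write $\mu^{t}=\mu^{(1-\theta)a}\mu^{\theta b}$ and apply Hölder's inequality with exponents $p=1/(1-\theta)$ and $p'=1/\theta$ to the sum. The sum factors as $\bigl(\sum\mu^a c_\xi\bigr)^{1-\theta}\bigl(\sum\mu^b c_\xi\bigr)^{\theta}$, and taking square roots gives the estimate with constant exactly $1$. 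The endpoint cases $\theta=0$ and $\theta=1$ are trivial, and $f\in H^b$ guarantees every sum is finite.

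For \eqref{eq:sobolev-interpolation-parameter}, assume $a<t<b$, so $\theta=(t-a)/(b-a)\in(0,1)$. Start from the log-convexity bound and apply weighted Young's inequality, $xy\le \theta\,x^{1/\theta}+(1-\theta)\,y^{1/(1-\theta)}$. Take
\[
  x=\kappa\norm[H^b]{f}^{\theta},
  \qquad
  y=\kappa^{-1}\norm[H^a]{f}^{1-\theta},
\]
with $\kappa^{1/\theta}=\delta$. The first term is then at most $\delta\norm[H^b]{f}$, and the second is a constant times
\[
  \delta^{-\theta/(1-\theta)}\norm[H^a]{f}.
\]
Since $\theta/(1-\theta)=(t-a)/(b-t)$, this is exactly the stated power of $\delta$.

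No step is a real obstacle. The only point needing care is the exponent bookkeeping in the Young step, so that the power of $\delta$ comes out as $-(t-a)/(b-t)$, together with the remark that the constant $C_{a,t,b}$ depends only on $\theta$. Because $\langle\xi\rangle\ge1$, no zero-mode issue arises, and the argument holds for all real $a\le b$, including negative ones.
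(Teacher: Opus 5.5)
Your proposal is correct and follows the paper's proof: Hölder's inequality on the Fourier side with exponents $1/(1-\theta)$ and $1/\theta$ gives the log-convexity bound, and weighted Young's inequality then gives the $\delta$-version. Your explicit choice $\kappa=\delta^{\theta}$ supplies the exponent bookkeeping that the paper leaves implicit, and it shows one can take $C_{a,t,b}=1-\theta$.
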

\begin{proof}
For $0<\theta<1$, since
$\norm[H^t]{f}^2=\sum_{\xi\in\Z^N}\langle\xi\rangle^{2t}|\widehat f(\xi)|^2$,
H\"older's inequality with exponents $1/(1-\theta)$ and $1/\theta$ gives
\[
\begin{aligned}
\|f\|_{H^t}^{\,2}
&=
\sum_{\xi\in\Z^N}
\left(
\langle\xi\rangle^{2a}|\widehat f(\xi)|^2
\right)^{1-\theta}
\left(
\langle\xi\rangle^{2b}|\widehat f(\xi)|^2
\right)^\theta
\\
&\le
\left(
\sum_{\xi\in\Z^N}
\langle\xi\rangle^{2a}|\widehat f(\xi)|^2
\right)^{1-\theta}
\left(
\sum_{\xi\in\Z^N}
\langle\xi\rangle^{2b}|\widehat f(\xi)|^2
\right)^\theta
\\
&=
\|f\|_{H^a}^{\,2(1-\theta)}
\|f\|_{H^b}^{\,2\theta}.
\end{aligned}
\]
Taking the square root gives \eqref{eq:sobolev-log-convexity}. The cases
$\theta=0$ and $\theta=1$ are immediate. Combining with Young's
inequality gives \eqref{eq:sobolev-interpolation-parameter}.
See also \cite[\S2, Eqs.~(2.16)--(2.17)]{Moser1988} and
\cite[Chapter~4]{TaylorPDEI2011}.
\end{proof}

\begin{Lemma}[Homogeneous estimates for tame families]
\label{lem:homogeneous-tame-families}
Let $F,H_1,H_2,G$ be graded Fr\'echet spaces and $U\subset F$ be open.
Suppose that $L:U\times H_1\to G$ and
$B:U\times H_1\times H_2\to G$ are tame, with $L$ linear in its
last variable and $B$ bilinear in its last two variables. Near any
$x_0\in U$, after increasing the degree $r$ and base $b$ if necessary,
there are constants $C_n$ such that, for $n\ge b$, we have:
\begin{align}
 \norm[G,n]{L(x)h}
 &\le C_n\Bigl[
 (1+\norm[F,n+r]{x})\norm[H_1,b+r]{h}
 +\norm[H_1,n+r]{h}\Bigr],
\label{eq:homogeneous-linear}\\
 \norm[G,n]{B(x)[h,k]}
 &\le C_n\Bigl[
 (1+\norm[F,n+r]{x})
 \norm[H_1,b+r]{h}\norm[H_2,b+r]{k}\notag\\
 &\hspace{14mm}
 +\norm[H_1,n+r]{h}\norm[H_2,b+r]{k}
 +\norm[H_1,b+r]{h}\norm[H_2,n+r]{k}\Bigr].
\label{eq:homogeneous-bilinear}
\end{align}
These estimates hold for all $h,k$ and uniformly for $x$ in a
sufficiently small fixed neighborhood of $x_0$.
\end{Lemma}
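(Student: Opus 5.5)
The plan is to deduce both estimates from the local tame bounds, using homogeneity in the direction variables, in the same way as Hamilton's Lemmas~2.1.7--2.1.8. Fix $x_0\in U$. Tameness of $L$ near the point $(x_0,0)\in U\times H_1$ gives a neighborhood $W\times\{\norm[H_1,b]{h}<\delta\}$, with $W$ a neighborhood of $x_0$, and integers $r,b$ such that for $n\ge b$
\[
  \norm[G,n]{L(x)h}\le C_n\bigl(1+\norm[F,n+r]{x}+\norm[H_1,n+r]{h}\bigr).
\]
Since Hamilton's neighborhoods are defined by finitely many seminorms, after increasing $b$ I may assume that the neighborhood in the $h$-variable is exactly $\{\norm[H_1,b+r]{h}<\delta\}$. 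The grading is increasing, so enlarging the base only shrinks the set.

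\emph{Linear case.} Take $h\neq0$ and put $t=\delta/(2\norm[H_1,b+r]{h})$ when $\norm[H_1,b+r]{h}\ne0$. Then $th$ lies in the admissible set, and linearity gives
\[
  \norm[G,n]{L(x)h}=t^{-1}\norm[G,n]{L(x)th}\le C_n t^{-1}\bigl(1+\norm[F,n+r]{x}\bigr)+C_n\norm[H_1,n+r]{h}.
\]
Because $t^{-1}=2\delta^{-1}\norm[H_1,b+r]{h}$, this is exactly \eqref{eq:homogeneous-linear}, with the constant enlarged by $2/\delta$. In the degenerate case $\norm[H_1,b+r]{h}=0$, the vector $sh$ is admissible for every $s>0$. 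Dividing the estimate by $s$ and letting $s\to\infty$ gives $\norm[G,n]{L(x)h}\le C_n\norm[H_1,n+r]{h}$, which is consistent with the claim.

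\emph{Bilinear case.} I argue the same way near $(x_0,0,0)$, but rescale the two directions independently. Set $s=\delta/(2\norm[H_1,b+r]{h})$ and $t=\delta/(2\norm[H_2,b+r]{k})$. Bilinearity gives $B(x)[h,k]=(st)^{-1}B(x)[sh,tk]$. The local tame bound then yields
\[
  (st)^{-1}C_n\bigl(1+\norm[F,n+r]{x}+s\norm[H_1,n+r]{h}+t\norm[H_2,n+r]{k}\bigr).
\]
Expanding this produces the three terms of \eqref{eq:homogeneous-bilinear}:
\begin{itemize}
  \item $(st)^{-1}(1+\norm[F,n+r]{x})\sim\norm[H_1,b+r]{h}\,\norm[H_2,b+r]{k}\,(1+\norm[F,n+r]{x})$;
  \item $t^{-1}\norm[H_1,n+r]{h}\sim\norm[H_1,n+r]{h}\,\norm[H_2,b+r]{k}$;
  \item $s^{-1}\norm[H_2,n+r]{k}\sim\norm[H_1,b+r]{h}\,\norm[H_2,n+r]{k}$.
\end{itemize}
If either low norm vanishes, I send the corresponding scaling parameter to infinity, exactly as in the linear case.

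\emph{Main obstacle.} The one delicate point is the bookkeeping step that makes the direction neighborhood a ball in the single seminorm of index $b+r$. Hamilton's local tame estimate holds on an unspecified neighborhood, and a general neighborhood need not be a ball in one seminorm. I handle this by noting that any neighborhood contains a ball in one seminorm of sufficiently high index, since the grading is increasing. I then increase the base until that index is at most $b+r$. The resulting constants are uniform for $x\in W$, which gives the asserted uniformity.
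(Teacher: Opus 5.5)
Your proposal is correct and follows the paper's proof. The paper also restricts the local tame estimate to fixed baseline balls in the direction variables, rescales $h$ (and $k$ independently in the bilinear case) into those balls, uses (bi)linearity, and handles vanishing baseline seminorms by a limit; your reduction to a ball in the single seminorm of index $b+r$ is the bookkeeping step the paper takes from Hamilton's Lemmas~2.1.7--2.1.8.
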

\begin{proof}
These estimates can be found in \cite[Part~II, Lemmas~2.1.7--2.1.8]{Hamilton1982}, with
common losses chosen by enlarging the indices. The local tame estimate
holds for the direction variables in fixed baseline balls about zero.
Rescale $h$, and then $k$ independently in the bilinear case, into those
balls and use homogeneity. This gives the displayed bounds. The zero baseline
seminorms are handled by a limit. The same rescaling works for any fixed
number of multilinear arguments. No uniformity beyond the chosen
base-point neighborhood is asserted.
\end{proof}

\begin{proof}[Proof of \cref{prop:composition-evaluation}]
\emph{(i)} Write $\phi=\Id+u$. Choose $\mathcal V$ so that
$\norm[C^1]{u}$ is bounded by a fixed sufficiently small constant.
Change of variables then gives
$\norm[L^2]{F\circ\phi}\le C\norm[L^2]{F}$ for positive constant $C$, and similarly bounds
composition by $\phi$ on every $L^p$, $2\le p\le\infty$.
For an integer $s\ge s_0$ and $|\alpha|=s$, the chain rule expresses
$\partial^\alpha(F\circ\phi)$ as a finite linear combination, with fixed numerical coefficients, of terms of the form:
\[
 (D^kF\circ \phi)[D^{\beta_1}\phi,\ldots, D^{\beta_k}\phi],
 \qquad
 1\le k\le s,\quad |\beta_j|\ge1,\quad
 \sum_{j=1}^k\beta_j=\alpha.
\]
Set
\[
 \theta_0=\frac{k-1}{s-1},\qquad
 \theta_j=\frac{|\beta_j|-1}{s-1}\quad(1\le j\le k).
\]
Then $\theta_0+\sum_{j=1}^k\theta_j=1$. Whenever $\theta_j=0$, the
corresponding exponent $2/\theta_j$ is interpreted as $\infty$ and the
estimate below is the elementary $L^\infty$ bound. The
Gagliardo--Nirenberg inequalities
\cite{Gagliardo1959,Nirenberg1959}, applied to $DF$ and to the periodic
matrix field $D\phi=I+du$, give
\[
 \norm[L^{2/\theta_0}]{D^kF}
 \le C_s\norm[L^\infty]{DF}^{1-\theta_0}
          \norm[H^s]{F}^{\theta_0},
\]
and
\[
 \norm[L^{2/\theta_j}]{D^{\beta_j}\phi}
 \le C_s\norm[L^\infty]{D\phi}^{1-\theta_j}
          \norm[H^{s-1}]{D\phi}^{\theta_j}.
\]
Combining the H\"older's inequality, the change-of-variables
bound, and
$\norm[H^{s-1}]{D\phi}\le C(1+\norm[H^s]{u})$, we have that:
\[\norm[L^2]{(D^kF\circ \phi)[D^{\beta_1}\phi,\ldots, D^{\beta_k}\phi]}\leq
 C_s\norm[H^s]{F}^{\theta_0}
 \Bigl[\norm[L^\infty]{DF}(1+\norm[H^s]{u})\Bigr]^{1-\theta_0}.
\]
Young's inequality bounds the right-hand-side by
\[
 C_s\Bigl[\norm[H^s]{F}
      +\norm[L^\infty]{DF}(1+\norm[H^s]{u})\Bigr].
\]
Since $s\ge s_0>N/2+1$, Sobolev embedding and the equivalence of
$H^s$ with the $L^2$-norm plus the order-$s$ derivatives give
\eqref{eq:composition-tame}.
Joint smooth tameness follows from
\cite[Part~II, Theorem~2.3.3]{Hamilton1982}. The $C^n$ and $H^n$
gradings on a compact manifold are equivalent up to a fixed shift.
In particular,
\[
 D\bigl[F\circ(\Id+u)\bigr][\dot F,h]
 =\dot F\circ(\Id+u)+(DF)\circ(\Id+u)\,h,
\]
and the higher derivatives are built from the same composition,
differentiation, and product operations.

\emph{(ii)} Choose a fixed cutoff $\chi$ in the slope variable that is
one on $\overline B_\rho$ and supported in $B_{2\rho}$. Extending
$\chi\mathscr F$ by zero to a sufficiently large flat torus in
$\Hom(P,Q)$ gives a tame linear map
$\mathscr F\mapsto\widetilde{\mathscr F}$ into functions on a compact
manifold of dimension $N+mq$. Define
\[
  \Psi_u(z)=\bigl(z+u(z),D_Pu(z)\bigr).
\]
Then $u\mapsto\Psi_u$ is smooth tame and of differential order one,
and, on the stated domain,
\[
  \mathcal E(\mathscr F,u)=\widetilde{\mathscr F}\circ\Psi_u.
\]
Hamilton's joint composition theorem applies without requiring
$\Psi_u$ to be a diffeomorphism, proving smooth tameness.
For completeness, choose an integer $\ell>(N+mq)/2$. On sets where
$\norm[C^1]{\mathscr F}+\norm[C^2]{u}\le R$, the $C^s$ composition
estimate \cite[Part~II, Lemma~2.3.4]{Hamilton1982} and Sobolev embedding
give, for every integer $s\ge0$,
\begin{align*}
 \norm[H^s(\T^N)]{\mathcal E(\mathscr F,u)}
 \le C_{s,R,\rho}\Bigl(&1+
 \norm[H^{s+\ell}(\T^N\times\overline B_{2\rho})]{\mathscr F}\\
 &+\norm[H^{s+\ell+1}(\T^N)]{u}\Bigr).
\end{align*}
The product-domain Sobolev norms use any fixed smooth background
metric. The estimate has a fixed, nonoptimal finite loss. Evaluation
of any fixed finite derivative of $\mathscr F$ follows by composing
with the corresponding tame linear differentiation operator.
\end{proof}

\begin{Lemma}
\label{lem:finite-substitution}
Let $\Psi$ be a smooth map on an open subset of a finite-dimensional vector
space.  If $f$ ranges in a set whose image lies in a compact subset of that
open set and whose $H^{s_0}$ norm is bounded for some $s_0>N/2$, then
$f\mapsto\Psi(f)$ is smooth tame.  In particular, matrix inversion,
determinant, and square root on positive numbers are smooth tame on sets uniformly separated from their singular
loci.
\end{Lemma}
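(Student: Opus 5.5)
The plan is to treat $\Psi(f)$ as a composition with a smooth map of finitely many variables and to reduce the claim to Hamilton's composition theory, together with a Moser-type Sobolev estimate for $\Psi\circ f$. The statement has three parts: continuity and smoothness (Fréchet sense), tame estimates for $\Psi(f)$, and tame estimates for all derivatives.

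\emph{Smoothness.} Fix the compact set $\mathcal K$ containing the image of $f$, and choose a relatively compact open neighborhood $W$ of $\mathcal K$ inside the domain of $\Psi$. The Fréchet derivatives are given pointwise by
\[
  D^k\Psi(f)[h_1,\dots,h_k]=(D^k\Psi)(f)[h_1,\dots,h_k].
\]
Each $D^k\Psi$ is again a smooth function on a neighborhood of $\overline W$. Smoothness in the $C^\infty$ topology is then standard: Taylor's formula with integral remainder, applied pointwise, gives differentiability in every $C^n$ seminorm, locally uniformly while the image stays in $W$. This condition is open because $H^{s_0}\subset C^0$.

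\emph{The basic tame estimate.} For an integer $s\ge s_0$, I would prove
\[
  \norm[H^s]{\Psi(f)}\le C_{s,R}\bigl(1+\norm[H^s]{f}\bigr)
  \qquad\text{on }\norm[H^{s_0}]{f}\le R .
\]
The chain rule writes $\partial^\alpha\Psi(f)$, with $|\alpha|=s$, as a sum of terms $(D^k\Psi)(f)[\partial^{\beta_1}f,\dots,\partial^{\beta_k}f]$ with $\sum\beta_j=\alpha$. The factor $(D^k\Psi)(f)$ is bounded in $L^\infty$ by the compactness of $\mathcal K$. The product of derivatives is estimated by Hölder's inequality and the Gagliardo–Nirenberg inequality, interpolating each $\partial^{\beta_j}f$ between $L^\infty$ and $H^s$ with exponents $\theta_j=|\beta_j|/s$. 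This is the same argument as in the proof of \cref{prop:composition-evaluation}(i), but simpler, since no change of variables is needed. Young's inequality then gives the bound, with $\norm[L^\infty]{f}\le CR$ absorbed into the constant. For $s<s_0$ the estimate follows from the case $s=s_0$.

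\emph{Derivatives.} The $k$-th derivative is the function $(D^k\Psi)(f)$ multiplied against $h_1,\dots,h_k$. Applying the basic estimate to the smooth map $D^k\Psi$ and then the tame product estimate \eqref{eq:tame-product} repeatedly gives exactly the multilinear form \eqref{eq:uniform-tame-form}, with base $s_0$ and degree $0$. Hence $f\mapsto\Psi(f)$ is smooth tame.

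\emph{Corollaries.} Matrix inversion, the determinant, and $\sqrt{\cdot}$ are smooth on the open sets $\{\det\ne0\}$ and $(0,\infty)$. If the values are uniformly separated from the singular locus and bounded, they lie in a compact subset, so the lemma applies.

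\emph{Main obstacle.} The only delicate point is the Gagliardo–Nirenberg bookkeeping that keeps the estimate linear in the top norm $\norm[H^s]{f}$. This is Moser's classical argument; alternatively, one may cite \cite[Part~II, Theorem~2.2.6]{Hamilton1982} for nonlinear operators of order zero, using the equivalence of the $C^n$ and $H^n$ gradings up to a fixed shift.
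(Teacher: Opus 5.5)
Your proposal is correct and follows essentially the same route as the paper, which simply invokes Hamilton's nonlinear bundle-operator theorem \cite[Part~II, Theorem~2.2.6]{Hamilton1982} together with Moser's substitution estimates. The paper likewise notes that the compact range bounds all derivatives of $\Psi$, and that each Fr\'echet derivative $(D^k\Psi)(f)[h_1,\dots,h_k]$ is again such an operator in $(f,h_1,\dots,h_k)$; your chain-rule, H\"older and Gagliardo--Nirenberg computation is Moser's argument written out, and since $\sum_j\theta_j=1$ it already gives the bound $C\norm[L^\infty]{f}^{k-1}\norm[H^s]{f}$ directly, so the appeal to Young's inequality is unnecessary.
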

\begin{proof}
This is the nonlinear bundle-operator theorem
\cite[Part~II, Theorem~2.2.6, Corollary~2.2.7, and the discussion
following it]{Hamilton1982}.
The Sobolev bounds follow from the nonlinear substitution estimates in
\cite[\S2, Eqs.~(2.22)--(2.23)]{Moser1988}, applied componentwise.
The compact-range hypothesis bounds all required derivatives of
$\Psi$ and, for inversion and square root, keeps the arguments away
from the singular set. Each Fr\'echet derivative is again a bundle
operator applied to $f$ and the variation fields, so it is tame.
\end{proof}

\section{Proof of the commutator estimate}
\label{sec:commutator-proof}
We prove the commutator estimate used in the inverse theorem.  The only
additional ingredient is the following endpoint Gagliardo-Nirenberg
interpolation inequality on the compact torus.

\begin{Lemma}[Endpoint Gagliardo-Nirenberg interpolation]
\label{lem:GN-endpoint}
Let $r\ge2$ be an integer and $1\le k\le r$.  Put
\[
 \theta_k=\frac{k-1}{r-1},\qquad
 \frac1{p_k}=\frac{\theta_k}{2},\qquad
 \frac1{q_k}=\frac{1-\theta_k}{2}.
\]
The endpoint conventions are $p_1=\infty$, $q_1=2$, $p_r=2$, and
$q_r=\infty$.  Then
\begin{align}
 \norm[L^{p_k}]{\partial^k E}
 &\le C_r
 \norm[L^\infty]{\nabla E}^{1-\theta_k}
 \norm[H^r]{E}^{\theta_k},
\label{eq:GN-E}\\
 \norm[L^{q_k}]{\partial^{r-k}v}
 &\le C_r
 \norm[L^\infty]{v}^{\theta_k}
 \norm[H^{r-1}]{v}^{1-\theta_k}.
\label{eq:GN-v}
\end{align}
\end{Lemma}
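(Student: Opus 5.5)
This is a Gagliardo–Nirenberg interpolation on the torus, and I would reduce it to the classical $L^\infty$–$H^s$ interpolation inequality. Write $F=\nabla E$. For \eqref{eq:GN-E}, $\partial^kE=\partial^{k-1}F$. So the claim is
\[
  \norm[L^{p_k}]{\partial^{k-1}F}\le C_r\norm[L^\infty]{F}^{1-\theta_k}\norm[H^{r-1}]{F}^{\theta_k},
\]
with $\theta_k=(k-1)/(r-1)$ and $1/p_k=\theta_k/2$. This has exactly the scaling of the Gagliardo–Nirenberg inequality
\[
  \norm[L^{2s/j}]{\partial^jF}\le C\norm[L^\infty]{F}^{1-j/s}\norm[\dot H^s]{F}^{j/s},
\]
taken with $j=k-1$ and $s=r-1$. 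The inequality \eqref{eq:GN-v} is the same statement with $F=v$, $j=r-k$, $s=r-1$. In that case $j/s=1-\theta_k$ and $2s/j=q_k$ as required. The endpoint cases are immediate: $j=0$ is the trivial $L^\infty$ bound, and $j=s$ is the definition of $H^s$. We also use $\norm[\dot H^{r}]{E}\le\norm[H^r]{E}$ and $\norm[\dot H^{r-1}]{\nabla E}\le\norm[H^r]{E}$.

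To prove the core inequality with $0<j<s$, I would follow Nirenberg's inductive scheme. The base step is the one-step estimate
\[
  \norm[L^{2p}]{\nabla f}^2\le C\norm[L^\infty]{f}\,\norm[L^{p}]{\nabla^2 f}
  \qquad(p\ge1),
\]
applied componentwise. It comes from integrating by parts on the periodic torus:
\[
  \int|\partial_lf|^{2p}=\int\partial_lf\,|\partial_lf|^{2p-2}\partial_lf
  =-\int f\,\partial_l\bigl(|\partial_lf|^{2p-2}\partial_lf\bigr)
  \le(2p-1)\norm[L^\infty]{f}\int|\partial_lf|^{2p-2}|\partial_l^2f|.
\]
Hölder with exponents $p/(p-1)$ and $p$ then gives the bound, and there are no boundary terms by periodicity.

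Iterating this gives the full chain. Apply the base estimate to $f=\partial^{i}F$ at the exponents matching the scaling, $p_i=2s/(i+1)$. This yields log-convexity of the sequence
\[
  a_i=\norm[L^{2s/i}]{\partial^iF},\qquad i=0,\dots,s,
\]
that is, $a_i^2\le Ca_{i-1}a_{i+1}$ with $a_0=\norm[L^\infty]{F}$ and $a_s=\norm[L^2]{\partial^sF}$. A log-convex sequence satisfies $a_j\le Ca_0^{1-j/s}a_s^{j/s}$, which is the claim. Mixed partial derivatives are handled by summing over components and using the equivalence of the $L^p$ norms of $\partial^i$ with the full tensor norm.

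The main obstacle is the intermediate step, where the one-step estimate at $f=\partial^iF$ must be applied between two \emph{different} Lebesgue exponents, namely $2s/(i-1)$ and $2s/(i+1)$. This requires the general Hölder form
\[
  \norm[L^{2q}]{\nabla f}^2\le C\norm[L^{a}]{f}\norm[L^{b}]{\nabla^2f},
  \qquad \frac1q=\frac1a+\frac1b,
\]
which I would prove by the same integration by parts with a three-factor Hölder inequality. Alternatively, I could cite \cite{Nirenberg1959} directly, since the torus has no boundary and we use homogeneous seminorms.
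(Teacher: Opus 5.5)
Your proposal is correct, but it takes a different route from the paper. The paper does not really prove the lemma. It cites the classical Gagliardo--Nirenberg inequality on $\R^N$ as two separate instances ($E$ interpolated between $W^{1,\infty}$ and $H^r$, and $v$ between $L^\infty$ and $H^{r-1}$). It then checks the dimensional balance $k-N/p_k=(1-\theta_k)+\theta_k(r-N/2)$, which reduces to $1/p_k=\theta_k/2$, and remarks that passing to the torus only changes constants. You instead observe that, with $F=\nabla E$ and $s=r-1$, both inequalities are one estimate, $\norm[L^{2s/j}]{\partial^jF}\le C\norm[L^\infty]{F}^{1-j/s}\norm[L^2]{\partial^sF}^{j/s}$, whose exponents do not involve $N$. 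You prove it directly on $\T^N$ by Nirenberg's integration by parts and discrete log-convexity. Your argument is self-contained and intrinsic to the periodic setting, and the purely homogeneous form (no lower-order terms) is visible because periodicity removes all boundary terms. The paper's citation is shorter but leaves the transfer from $\R^N$ to $\T^N$ to a one-line remark.

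Two points should be tidied in a write-up. First, the one-step bound with $\norm[L^\infty]{f}$ on the right does not by itself produce the chain; the three-factor H\"older version you identify is the one actually used. It should be applied to $f=\partial^{i-1}F$, not $\partial^iF$, with $a=2s/(i-1)$, $b=2s/(i+1)$ and $q=s/i\ge1$. This gives $a_i^2\le Ca_{i-1}a_{i+1}$ for $1\le i\le s-1$, with mixed derivatives covered by writing each order-$i$ multi-index as $e_l+\beta$. Second, convexity of $\log a_i+\frac12(\log C)\,i^2$ yields $a_j\le C^{j(s-j)/2}a_0^{1-j/s}a_s^{j/s}$. If some $a_i$ vanishes, then $F$ is constant on the torus, and the inequality is trivial.
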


\begin{proof}
The inequalities are standard instances of the Gagliardo-Nirenberg
interpolation inequality, see \cite{Gagliardo1959,Nirenberg1959}. We record
the exponent calculation because it is what is needed below.  Interpolating between $W^{1,\infty}$ and $H^r$ gives
\[
 k-\frac N{p_k}
 =(1-\theta_k)\cdot1
 +\theta_k\left(r-\frac N2\right).
\]
Since $1+\theta_k(r-1)=k$, this identity is equivalent to
$1/p_k=\theta_k/2$.  Likewise, interpolating between $L^\infty$ and
$H^{r-1}$ gives
\[
 r-k-\frac N{q_k}
 =(1-\theta_k)\left(r-1-\frac N2\right),
\]
which is equivalent to $1/q_k=(1-\theta_k)/2$.  The endpoint cases are the corresponding trivial estimates. Localization from $\R^N$ to the torus does not change the exponents and only changes the constants.
\end{proof}

\begin{proof}[Proof of \cref{lem:commutator}]
When $r=1$ the assertion follows
directly from
\[
 [\partial_j,E]v=(\partial_jE)v
\]
and H\"older's inequality.  We may therefore assume $r\ge2$.  For a
multiindex $\alpha$,
\begin{equation}
 [\partial^\alpha,E]v
 =\sum_{0<\beta\le\alpha}
 \binom{\alpha}{\beta}
 (\partial^\beta E)(\partial^{\alpha-\beta}v).
\label{eq:comm-expansion}
\end{equation}
It is enough to treat $|\alpha|=r$, lower orders are included by increasing
the constant.  Put $k=|\beta|$.  H\"older's inequality with the exponents of
\cref{lem:GN-endpoint} gives
\[
 \norm[L^2]{(\partial^\beta E)
 (\partial^{\alpha-\beta}v)}
 \le
 \norm[L^{p_k}]{\partial^kE}
 \norm[L^{q_k}]{\partial^{r-k}v}.
\]
Using \eqref{eq:GN-E}-\eqref{eq:GN-v}, the right side is at most
\[
 C_r
 \bigl(\norm[L^\infty]{\nabla E}
       \norm[H^{r-1}]{v}\bigr)^{1-\theta_k}
 \bigl(\norm[H^r]{E}
       \norm[L^\infty]{v}\bigr)^{\theta_k}.
\]
Weighted Young's inequality
$X^{1-\theta}Y^\theta\le(1-\theta)X+\theta Y$ therefore yields
\begin{equation}
 \norm[L^2]{(\partial^\beta E)
 (\partial^{\alpha-\beta}v)}
 \le C_r\Bigl(
 \norm[L^\infty]{\nabla E}\norm[H^{r-1}]{v}
 +\norm[H^r]{E}\norm[L^\infty]{v}
 \Bigr).
\label{eq:comm-one-term}
\end{equation}
The formulas remain valid for $k=1$ and $k=r$ by the endpoint conventions.
There are finitely many pairs $(\alpha,\beta)$ for fixed $r$.  Summing
\eqref{eq:comm-one-term} and taking the square sum over $|\alpha|\le r$
proves \eqref{eq:commutator}.  The calculation is componentwise and therefore
applies without change to matrix multiplication.
\end{proof}

\printbibliography

\end{document}